\newcommand\bartoncrossref[2]{\hyperref[#2]{#1~\ref*{#2}}}
\newcommand\bartoneqref[2]{\hyperref[#2]{#1~\textup{(\ref*{#2})}}}
\newcommand{\mypic}[1]{\includegraphics[draft=false]{bartonwardpics-#1.mps}}
\numberwithin{equation}{section}
\numberwithin{figure}{section}
\newtheorem{theorem}{Theorem}[section]
\newtheorem{lemma}[theorem]{Lemma}
\theoremstyle{definition}
\newtheorem{definition}[theorem]{Definition}
\newtheorem{example}[theorem]{Example}
\theoremstyle{remark}
\newtheorem{remark}[theorem]{Remark}
\theoremstyle{remark}
\theoremstyle{plain}
\newtheorem{conjecture}[theorem]{Conjecture}
\newcommand\thmref[1]{\bartoncrossref{Theorem}{thm:#1}}
\newcommand\lemref[1]{\bartoncrossref{Lem\-ma}{lem:#1}}
\newcommand\dfnref[1]{\bartoncrossref{Definition}{dfn:#1}}
\newcommand\rmkref[1]{\bartoncrossref{Remark}{rmk:#1}}
\newcommand\conjref[1]{\bartoncrossref{Conjecture}{conj:#1}}
\newcommand\figref[1]{\bartoncrossref{Figure}{fig:#1}}
\newcommand\secref[1]{\bartoncrossref{Section}{sec:#1}}
\newcommand{\abs}[1]{\lvert#1\rvert}
\newcommand{\D}{\mathbb{D}}
\newcommand{\R}{\mathbb{R}}
\newcommand{\C}{\mathbb{C}}
\newcommand{\U}{\mathbb{U}}
\newcommand{\F}{\mathcal{F}}
\newcommand{\B}{\mathcal{B}}
\DeclareMathOperator{\dist}{dist}
\DeclareMathOperator{\diam}{diam}
\DeclareMathOperator{\re}{Re}
\DeclareMathOperator{\im}{Im}
\newcounter{tempctr}
\newenvironment{eqnenumerate}{%
	\setcounter{tempctr}{\value{equation}}
	
	\begin{enumerate}
	\usecounter{equation}
	\setcounter{equation}{\value{tempctr}}
}{%
	\setcounter{tempctr}{\value{equation}}
	\end{enumerate}
	\setcounter{equation}{\value{tempctr}}
}
\newenvironment{thmenumerate}{\begin{eqnenumerate}}{\end{eqnenumerate}}
\begin{document}

\title[{A new class of harmonic measure distribution functions}]{A new class of \\ harmonic measure distribution functions}

\author{Ariel Barton}
\address{Department of Mathematics, University of Minnesota, Minneapolis, Minnesota 55455}
\email{abarton@math.umn.edu}

\author{Lesley A. Ward}
\address{School of Mathematics and Statistics, University of South Australia, Mawson Lakes Campus, Mawson Lakes SA 5095, Australia}
\email{Lesley.Ward@unisa.edu.au}

\subjclass[2010]{Primary 30C85, Secondary 30C20, 31A15, 60J65}
%
%

\date{}


\keywords{Harmonic measure; planar domains; Brownian motion; harmonic measure distribution functions; simply connected domains}

\begin{abstract}
Let $\Omega$ be a planar domain containing~0.  Let $h_\Omega(r)$ be the
harmonic measure at 0 in $\Omega$ of the part of the boundary of
$\Omega$ within distance $r$ of~0. The resulting function $h_\Omega$ is
called the harmonic measure distribution function of~$\Omega$. In this
paper we address the inverse problem by establishing several sets of
sufficient conditions on a function~$f$ for $f$ to arise as a harmonic
measure distribution function. In particular, earlier work of Snipes and
Ward shows that for each function $f$ that increases from zero to one,
there is a sequence of multiply connected domains $X_n$ such that
$h_{X_n}$ converges to $f$ pointwise almost everywhere. We show that if
$f$ satisfies our sufficient conditions, then $f = h_\Omega$, where
$\Omega$ is a subsequential limit of bounded simply connected domains
that approximate the domains~$X_n$. Further, the limit domain is unique
in a class of suitably symmetric domains. Thus $f =
h_\Omega$ for a unique symmetric bounded simply connected
domain~$\Omega$.
\end{abstract}

\maketitle

\section{Introduction}

Let $\Omega$ be a domain in the complex plane containing 0. We define the \emph{harmonic measure distribution function} $h_\Omega(r)$ by the formula
\[h_\Omega(r) := \omega(0,\partial\Omega\cap \overline{B(0,r)},\Omega)\]
where $\omega(0,E,\Omega)$ denotes the harmonic measure of the set $E$ from the basepoint 0 in~$\Omega$. For $r>0$, the number $h_\Omega(r)$ gives the probability that a Brownian particle released at 0 first exits $\Omega$ within a distance $r$ of~$0$. Two questions regarding such functions have been investigated in a number of papers \cite{WalW96, WalW01, BetS, SniW05, SniW08}.
First, what functions can be constructed as $h_\Omega$ for some~$\Omega$? Second, what can be determined about $\Omega$ from~$h_\Omega$?

These questions originally arose from Brannan and Hayman's 1989 paper \cite{BraH}, which described the current state of some problems in complex analysis and listed some new ones. Specifically, these questions arose from Problem 6.116, proposed by Stephenson. Problem 6.116 poses the questions listed above for the related function $w_\Omega(r)$, defined to be the harmonic measure of $\Omega\cap\partial B(0,r)$ in the connected component of $\Omega\cap B(0,r)$ containing~0.

The following informally stated theorem (restated as \thmref{mainunique} below) summarizes the results in this paper. This existence result gives sufficient conditions on a function $f$ for $f$ to arise as the harmonic measure distribution function of a bounded simply connected domain.

\begin{theorem}\label{thm:intro}
Let $f$ be a nondecreasing, right-continuous function that is zero on $(0,\mu)$ and $1$ on $[M,\infty)$, for some numbers $M>\mu>0$.
Suppose that the slopes of the secant lines to the graph of $f(r)$ for $\mu\leq r\leq M$ are bounded from below by a positive number.

If $f(r)$ is discontinuous at $r=\mu$, and if $M-\mu$ is small enough, then there exists a domain $\Omega$ that is bounded and simply connected, and is symmetric in the sense of \dfnref{symmetric}, such that $f=h_\Omega$. Furthermore, up to sets of harmonic capacity zero, this domain $\Omega$ is unique among bounded symmetric domains.
\end{theorem}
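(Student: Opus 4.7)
The plan is to realize $\Omega$ as a subsequential limit of bounded simply connected domains approximating the multiply connected domains produced by the Snipes--Ward theorem. First, I would invoke Snipes--Ward to obtain a sequence of multiply connected domains $X_n$, each obtained by deleting finitely many concentric circular arcs from the plane, with $h_{X_n}\to f$ pointwise almost everywhere. By grouping the arcs into rotationally symmetric configurations at the outset---which is possible because $h_{X_n}$ is unchanged by our freedom to place symmetric copies of each arc---I would arrange that $X_n$ is invariant under rotation by $2\pi/N_n$ with $N_n\to\infty$, so that the eventual limit will satisfy the symmetry hypothesis of the theorem.

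Next, I would convert each $X_n$ to a bounded simply connected domain $Y_n$ by truncating outside $B(0,M)$ and adjoining a system of thin radial slits that connect the rotational copies of the removed arcs into a single boundary component. The jump $f(\mu)>0$ is essential here: it implies that the circle $|z|=\mu$ already carries a large symmetric removed arc, so that $0$ is separated from $\infty$ in $X_n$ even before the slits are added, and the new slits can be confined to the thin annulus $\{\mu\le|z|\le M\}$. The smallness of $M-\mu$ then controls the error: a Brownian motion from $0$ reaching this annulus is, with probability tending to $1$, absorbed by one of the pre\hyp{}existing symmetric arcs before it can diffuse far enough angularly to distinguish the slits, so $h_{Y_n}-h_{X_n}\to 0$ and hence $h_{Y_n}\to f$.

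The sequence $\{Y_n\}$ is contained in $\overline{B(0,M)}$ and each $Y_n$ contains $B(0,\mu)$, so a subsequence converges in the Carath\'eodory kernel sense to a domain $\Omega$ with $B(0,\mu)\subset\Omega\subset\overline{B(0,M)}$. Standard stability results give continuity of harmonic measure at $0$ under such convergence of uniformly regular domains, yielding $h_\Omega(r)=f(r)$ at every continuity point of $f$, and by right-continuity for all $r>0$. The limit inherits simple connectedness and dihedral symmetry from the $Y_n$. For uniqueness, suppose two bounded simply connected symmetric domains $\Omega_1,\Omega_2$ share the same $h$; the symmetry reduces the comparison to a single fundamental wedge, inside which $\partial\Omega_i\cap\{|z|=r\}$ is determined up to rotation by the jump of $h$ at $r$, and the positive lower bound on secant slopes of $f$ on $[\mu,M]$ forces this boundary to cross every radius in that interval, pinning down the arc up to a set of harmonic capacity zero.

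I expect the main obstacle to be the quantitative slitting estimate in the second step: controlling the harmonic-measure error introduced when gluing the components of the complement of $X_n$ into a single connected set while preserving the rotational symmetry. Both hypotheses---the jump at $\mu$ and the smallness of $M-\mu$---are precisely what allow this estimate to close, since losing either would both destroy the simple connectedness of the candidate limit and introduce a nonvanishing perturbation to $h_{Y_n}$.
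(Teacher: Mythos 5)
Your high-level architecture is the same as the paper's (Snipes--Ward circle domains, thin radial slits to make them simply connected, error control coming from the jump at $\mu$ and the smallness of $M-\mu$, then a subsequential limit), but the symmetrization step is wrong and would sink the argument. The symmetry required by the theorem (\dfnref{symmetric}) is reflection about the real axis together with the requirement that $\partial B(0,r)\setminus\Omega$ be a \emph{single connected} closed arc meeting the positive real axis; it is not rotational symmetry. Splitting each removed arc into $N_n$ rotationally spaced copies destroys exactly this connectedness, so the limit would not satisfy the symmetry hypothesis and the uniqueness statement (\thmref{unique}) would not apply to it. Worse, if $N_n\to\infty$ then any Carath\'eodory kernel limit of the $Y_n$ is invariant under \emph{all} rotations, and a bounded, simply connected, rotation-invariant domain containing $0$ is a disk centered at $0$, whose $h$-function is a single step --- incompatible with a general $f$. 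The paper instead keeps each removed arc connected and centered on the positive real axis and places the two gates per channel symmetrically at angles $\pm\phi_{n,k}$, so that every blocked circle domain, and hence the limit (\lemref{mainsymmetric}), is symmetric in the required sense.

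The quantitative slitting estimate you identify as the main obstacle is indeed where the work is, and your sketch omits the mechanism that makes it close. The slits necessarily touch the arcs, so ``the particle is absorbed before it reaches the slits'' cannot by itself make their harmonic measure vanish; the paper recesses each gate into the channel between consecutive arcs by an inset angle $\kappa_n$ chosen so that $\kappa_n\to0$ (needed for uniform local connectivity of the complements, which is what justifies convergence of the $h$-functions via equicontinuity of the Riemann maps) while $n\kappa_n\to\infty$ (so that \lemref{curvechannel} gives a total gate contribution of order $n\exp\bigl(-\pi\mu n\kappa_n/(2(M-\mu))\bigr)\to0$). The hypotheses ``$f$ jumps at $\mu$'' and ``$M-\mu$ small'' enter not to separate $0$ from the outer boundary but to guarantee, via Lemmas~\ref{lem:fjump} and~\ref{lem:deriv}, that \emph{every} boundary arc has arclength bounded below by a fixed positive constant, so that every gate can be recessed by the full $\kappa_n$; your proposal only uses the large arc at radius $\mu$. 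Finally, your uniqueness argument does not work: for a simply connected domain $h_\Omega$ is continuous and strictly increasing on $(\mu,M)$, so ``the jump of $h$ at $r$'' determines nothing about $\partial\Omega\cap\partial B(0,r)$. The paper's proof instead shows that for a bounded symmetric simply connected domain the radial limits of the normalized Riemann map satisfy $\lim_{\rho\to1^-}\abs{\Phi(\rho e^{i\theta})}=h_\Omega^{-1}(\abs\theta/\pi)$ for a.e.\ $\theta$, and then identifies the two Riemann maps by showing $\re\log(\Phi/\widetilde\Phi)$ is harmonic with vanishing boundary values.
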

The precise condition on $M-\mu$ is given in the statement of \thmref{fjump}. The requirement that $f$ increase from $0$ to $1$ is necessary for $f$ to arise as the harmonic measure distribution function of \emph{any} bounded domain.
A set of less restrictive but more technical sufficient conditions is given in \thmref{main}.

Our proof of \thmref{intro} relies on an earlier existence result.
In \cite[Theorem~2]{SniW05}, Snipes and Ward proved that if $f$ is a right-continuous step function with finitely many jumps, and increases from~0 to~1, then $f$ arises as the harmonic measure distribution function of some multiply connected domain. They were able to describe this multiply connected domain fairly precisely. Much of the work in the current paper is done to pass from these multiply connected domains to simply connected domains.

We review some results concerning harmonic measure distribution functions of simply connected domains. In \cite{SniW08} and \cite{BetS}, it was shown that sequences of simply connected domains that converge in certain senses have convergent harmonic measure distribution functions. We will use these results; see \thmref{snipes:conv}.

In \cite{WalW96}, \cite{WalW01}, and \cite{BetS}, Walden, Ward, Betsakos and Solynin investigated the behavior of the harmonic measure distribution function $h_\Omega(r)$ of a simply connected domain for $r$ near $\dist(0,\partial\Omega)$. If $\mu=\dist(0,\partial\Omega)$ and $\Omega$ is simply connected, it is easy to see that $h_\Omega(r)=0$ for all $r<\mu$, and $h_\Omega(r)>0$ for all $r>\mu$. These papers established that for every number $\beta$ with $0<\beta<1$, there is some simply connected domain $\Omega$ such that $\lim_{r\to\mu^+}h_{\Omega}(r)/(r-\mu)^\beta$ exists and is positive. It was also shown that not all harmonic measure distribution functions of simply connected domains have this property (or even the weaker property $c(r-\mu)^\beta\leq h_{\Omega}(r)\leq C(r-\mu)^\beta$ for some numbers $C>c>0$).

These papers show that for some properties (asymptotic behavior of $f$ at $\mu$) there is a domain whose harmonic measure distribution function has those properties. They do not specify the whole function $f$; that is, they do not provide sufficient conditions for a function to arise as the harmonic measure distribution function of some domain. In this paper, we will provide three progressively more restrictive, but easier to check, conditions on~$f$, such that if $f$ meets any of those conditions then $f$ must be the harmonic measure distribution function of a simply connected domain. \thmref{intro} informally states the most transparent, and thus most restrictive, of these sufficient conditions.

\begin{example}\label{exa:jump}
The function
\begin{equation}
\label{eqn:jumpfunction}
f(r)=\begin{cases} 0, & 0< r\leq 1;\\
\displaystyle
\frac{1}{2}+\frac{1}{2}\frac{r-1}{0.0992}, & 1\leq r \leq 1.0992;\\
1, & 1.0992\leq r
\end{cases}
\end{equation}
graphed in \figref{jumpfunction} satisfies the sufficient conditions of \thmref{fjump} (see \rmkref{fjumpexample}); thus, we know that there exists a simply connected domain $\Omega$ such that $f=h_\Omega$. Before the current paper, it was not known whether this function, or indeed any function with a nonhorizontal linear segment, was the harmonic measure distribution function of any domain.
\end{example}

\begin{figure}
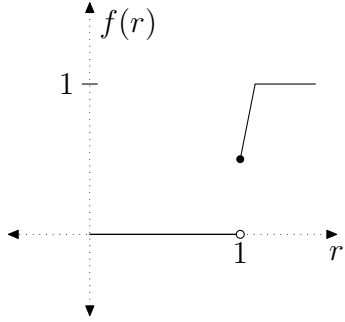

\mypic{8}
\caption{A function $f$, defined by \bartoneqref{equation}{eqn:jumpfunction}, that arises as the $h$-function of some simply connected domain.}
\label{fig:jumpfunction}
\end{figure}

The organization of this paper is as follows. In \secref{dfn}, we define our terms and give some background information about harmonic measure and harmonic measure distribution functions. In Sections~\ref{sec:convergence}, \ref{sec:sufficient} and~\ref{sec:functions}, we describe our sufficient conditions.

Specifically, in \secref{convergence}, we show that if $f=\lim_{n\to\infty} h_{\Omega_n}$, where $\{\Omega_n\}_{n=1}^\infty$ is a sequence of bounded, simply connected domains with uniformly locally connected complements, then $f=h_\Omega$ for some simply connected domain~$\Omega$.

In \secref{sufficient} we provide a candidate sequence~$\{\Omega_n\}_{n=1}^\infty$. Since monotonic functions can be approximated by step functions, by \cite{SniW05} (mentioned above) any function $f$ that increases from zero to one is the limit of a sequence of harmonic measure distribution functions of multiply connected domains. We provide a condition that lets us approximate these domains with well-behaved simply connected domains, yielding a sufficient condition on $f$ that may be checked by examining one sequence of domains rather than all sequences of simply connected domains.

In \secref{functions}, we show that functions that increase fast enough, in some senses that are easy to check, satisfy the sufficient condition of \secref{sufficient} and thus are harmonic measure distribution functions of simply connected domains. In particular, \bartoncrossref{Example}{exa:jump} and the existence part of \thmref{intro} are proven in \secref{functions}.

In \secref{unique} we consider uniqueness. From \cite[Remark~1]{SniW05}, a step function arises as the harmonic measure distribution function of many different domains. However, if the domain is assumed to be bounded and satisfy a symmetry condition, then the domain is unique. The symmetry condition can be generalized to arbitrary domains. We show that for bounded simply connected domains, the symmetry condition again implies uniqueness. We also show that the domains produced in Sections~\ref{sec:sufficient} and~\ref{sec:functions} satisfy this symmetry condition. Thus, if a function~$f$ satisfies the necessary conditions of Section~\ref{sec:sufficient} or~\ref{sec:functions}, then $f$ is the harmonic measure distribution function of a unique bounded simply connected symmetric domain.

Finally, in \secref{circle}, we prove some technical lemmas, involving the harmonic measure of circle domains and related domains, that we use in Sections~\ref{sec:sufficient} and~\ref{sec:functions}.

It should be emphasized that this paper provides only \emph{sufficient} conditions for a function $f$ to be realizable as the harmonic measure distribution function of a simply connected domain~$\Omega$. These conditions are not necessary.

Let $\F=\{h_\Omega:\Omega$ is simply connected$\}$ be the set of all harmonic measure distribution functions of simply connected domains, and let $\F_k$ be the set of all functions $f$ that satisfy the sufficient conditions of Section~$k$. Then $\F\supseteq \F_{\ref{sec:convergence}} \supseteq \F_{\ref{sec:sufficient}}\supseteq \F_{\ref{sec:functions}}$.

Some of these inclusions are proper.
It is easy to exhibit harmonic measure distribution functions that do not satisfy the conditions of \secref{functions}. In particular, no continuous function satisfies these conditions, and there are many well-behaved domains with continuous $h$-functions. For example, any disk containing but not centered at the origin has a continuous $h$-function.
By considering a constant sequence of domains, we can in fact exhibit functions $f=h_\Omega$ that satisfy the sufficient conditions of \secref{convergence} but not \secref{functions}. In other words, $\F_{\ref{sec:functions}}\subsetneq \F_{\ref{sec:convergence}}$. It is not known whether such functions satisfy the conditions of \secref{sufficient}.

Also, $\F_{\ref{sec:sufficient}}\subsetneq \F$.
The condition in \secref{sufficient} is sufficient for a function $f$ to arise as the harmonic measure distribution function of a bounded simply connected symmetric domain whose complement is locally connected. By uniqueness, if $f=h_\Omega$ for some bounded simply connected symmetric domain $\Omega$ whose complement is \emph{not} locally connected, then $f$ cannot satisfy the conditions of \secref{sufficient}.

An early version of this paper \cite{B04} appeared as the first author's undergraduate senior thesis, advised by the second author, while both authors were at Harvey Mudd College. Some of our later work on this paper was done while the first author was at the University of Chicago and at Purdue University.  The first author would like to thank Henry Krieger for acting as second reader of \cite{B04} and for useful advice regarding functional analysis.

\section{Definitions}\label{sec:dfn}

In this section, we will provide definitions for many terms and concepts used throughout this paper.

Let $\Omega$ be a connected domain, let $z$ be a point in~$\Omega$, and let $E$ be a measurable subset of $\partial\Omega$. Let $\omega(z,E,\Omega)$ denote the usual harmonic measure of $E$ in $\Omega$ with basepoint~$z$.
The harmonic measure can be calculated by solving the Dirichlet problem:
\begin{equation}\label{eqn:hmeasure}
\omega(z,E,\Omega) = u_E(z),\quad\text{where}\quad
\begin{cases}\Delta u_E=0 & \text{ in } \Omega, \\
u_E = 1_E & \text{ on } \partial\Omega.\end{cases}
\end{equation}
From \cite{Kak44}, $\omega(z,E,\Omega)$ is also the probability that a Brownian particle, released from $z$, first exits $\Omega$ through $E$.

Notice that if $\Phi:\Omega\mapsto \Phi(\Omega)$ is a conformal mapping that extends continuously to $\overline\Omega$, and if $F\subset\partial\Phi(\Omega)$ and $E=\Phi^{-1}(F)$, then
\[\omega(z,E,\Omega)=\omega(\Phi(z),F,\Phi(\Omega)).\]
In particular, if $\Phi$ is one-to-one on $\overline\Omega$ then $\omega(z,E,\Omega)=\omega(\Phi(z),\Phi(E),\Phi(\Omega))$ for every $E\subset\partial\Omega$.

Also, if $\Omega\subset\widetilde\Omega$, $E\subset\partial\Omega\cap\partial\widetilde\Omega$, and $z\in\Omega\cap\widetilde\Omega$, then
\[\omega(z,E,\Omega)\leq\omega(z,E,\widetilde\Omega).\]
This property is referred to as the \emph{monotonicity in the domain property} of harmonic measure.

Finally, if $\Omega\subset\widetilde\Omega$ and $E\subset\partial\widetilde\Omega$, then
\[\omega(z,E,\widetilde\Omega)\leq\omega(z,E\cap\partial\Omega,\Omega)
+\omega(z,\partial\Omega\setminus\partial\widetilde\Omega,\Omega).\]

We may define the harmonic measure distribution function, or $h$-function, of $\Omega$ by
\begin{equation}\label{eqn:h}
h_\Omega(r):=\omega(0,\partial\Omega\cap \overline{B(0,r)},\Omega).
\end{equation}

Recall that
\[w_\Omega(r) = \omega(0,\Omega\cap\partial B(0,r), \Omega^*)
=1-\omega(0,\partial\Omega\cap \overline{B(0,r)}, \Omega^*)\]
where $\Omega^*$ is the connected component of $\Omega\cap B(0,r)$ that contains~$0$.
Then by monotonicity in the domain, $h_\Omega(r)\geq 1-w_{\Omega}(r)$.

\begin{remark} \label{rmk:hnesc}
All harmonic measure distribution functions have a number of common properties. For any domain~$\Omega$, $h_\Omega$ is right-continuous, nondecreasing, and if $\mu=\dist(0,\partial\Omega)$, then $h_\Omega(r)=0$ for all $r<\mu$. Furthermore, if $\partial\Omega$ is bounded, then there is some $M$ such that $h_\Omega(r)=1$ for all $r\geq M$.

These observations give a set of necessary conditions for a function $f$ to arise as the harmonic measure distribution function of any domain.

Here is another necessary condition stemming from the relation between $h_\Omega$ and $w_\Omega$. This necessary condition applies only to the $h$-functions of simply connected domains. As noted in \cite{WalW96}, it follows from Beurling's solution to Milloux's problem \cite{Ahl73} that, if $\mu=\dist(0,\partial\Omega)$ and $\Omega$ is simply connected, then $h_\Omega(r)\geq 1-w_\Omega(r)\geq 1-w_{\Omega_{\mu,r}}(r)$, where $\Omega_{\mu,r}=B(0,r)\setminus [\mu,r)$. The domain $\Omega_{\mu,r}$ is simple enough that $w_{\Omega_{\mu,r}}$ may be computed directly via the Riemann map; we find that
\[h_{\Omega}(r)\geq 1-\frac{4}{\pi}\arctan\sqrt{\frac{\mu}{r}}.\]
For $r$ large we have the simpler inequality $h_\Omega(r)\geq 1-c/\sqrt{r}$ for some constant $c$; this inequality is known as Beurling's Lemma. As noted in \cite{WalW96}, this inequality shows that the condition $f=h_\Omega$ for some simply connected domain is more restrictive than the condition $f=h_\Omega$ for some arbitrary domain. (See \thmref{snipes:circle} for specific counterexamples.)
\end{remark}

We will not discuss necessary conditions further in this paper.

\begin{definition} \label{dfn:circle}
We say that $X\subset\C$ is a \emph{circle domain} if there exist numbers $0<r_0<r_1<\dots<r_{n}$ such that
\[X = B(0,r_{n})\setminus \bigcup_{j=0}^{n-1} A_j,\]
where $A_j$ is a connected closed proper subset of $\partial B(0,r_j)$ that is symmetric about the real axis and contains a positive real. We call the arcs $A_j$ the \emph{boundary arcs} of~$X$.

We may regard the boundary circle $\partial B(0,r_n)$ as the $n$th boundary arc~$A_n$; then $A_n$ is a full circle and $A_j$, $0\leq j<n$, is not.

We say that $\Omega\subset \C$ is a \emph{blocked circle domain} if $\Omega$ is simply connected, symmetric about the real axis,  $\Omega\subset X$ for some circle domain~$X$, and if $\partial\Omega\setminus\partial X$ consists of a number of \emph{gates}, that is, radial segments whose endpoints are at distances $r_j$ and $r_{j+1}$ from~0. See \figref{circledomain}.
\end{definition}

\begin{figure}
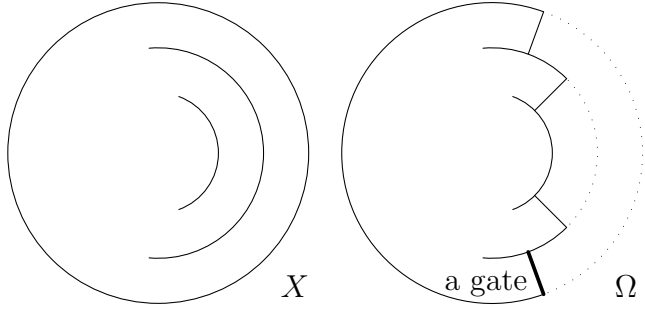

\mypic{0}
\caption{A circle domain $X$ and a blocked circle domain~$\Omega$.}
\label{fig:circledomain}
\end{figure}

Circle domains have historically been of interest because any step function satisfying the necessary conditions of \rmkref{hnesc} arises as the $h$-function of a circle domain. See \cite[Theorem~2]{SniW05}, quoted in this paper as \thmref{snipes:circle}. In the current paper, we will use sequences of blocked circle domains to construct domains with prescribed $h$-functions.

If $A$ is a connected arc of a circle centered at the origin, for notational convenience we refer to the angle at the origin, subtended by~$A$, as the \emph{arclength} of~$A$.

\begin{definition} \label{dfn:symmetric}
Throughout this paper, when we say that a domain $\Omega$ is \emph{symmetric}, we mean that $0\in\Omega$, that $\Omega$ is symmetric about the real axis, and that for every $r>0$, $\partial B(0,r)\setminus\Omega$ is either empty or a connected closed set that contains a point on the positive real axis.
\end{definition}
This notion of symmetry will be needed for the uniqueness results in \secref{unique}. We observe that a connected subset of $\partial B(0,r)$ is necessarily path-connected.

All circle domains and blocked circle domains are symmetric in this sense.
Let $\Omega$ be symmetric, and let $r$, $\theta$ be real numbers with $r>0$. We remark that if $re^{i\theta}\in\Omega$ then $-r\in\Omega$, and if $re^{i\theta}\notin\Omega$ then $r\notin\Omega$. If $\Omega$ is also bounded and connected, let $\mu$ and $M$ be the largest and smallest numbers, respectively, such that $B(0,\mu)\subset \Omega\subset B(0,M)$; then $\mu\notin\Omega$ and $(-M,\mu)\subset\Omega$. If in addition $\Omega$ or $\Omega^C$ is simply connected, then $\Omega\cap[\mu,M]=\emptyset$.

\begin{definition} \label{dfn:ULconnected}
A path-connected closed set $A$ is \emph{locally connected} if, for every $\varepsilon>0$, there exists a $\delta>0$ such that any two points $b$, $c\in A$ with $\abs{b-c}<\delta$ can be joined by a continuum $B\subset A$ of diameter at most $\varepsilon$.

A sequence of closed sets $\{A_n\}_{n=1}^\infty$ is \emph{uniformly locally connected} if, for every $\varepsilon>0$, there exists a $\delta>0$ independent of $n$ such that any two points $b$, $c\in A_n$ with $\abs{b-c}<\delta$ can be joined by a continuum $B_n\subset A_n$ of diameter at most $\varepsilon$.
\end{definition}

\begin{definition} \label{dfn:equicontinuous}
A sequence of maps $\{\Phi_n\}_{n=1}^\infty$ on a domain $\Omega$ is \emph{equicontinuous} if, for all $\varepsilon>0$, there exists a $\delta>0$, depending only on $\varepsilon$, such that if $x$, $y\in \Omega$ and $\abs{x-y}<\delta$, then $\abs{\Phi_n(x)-\Phi_n(y)}<\varepsilon$ for all~$n$.
\end{definition}

In particular, if $\Omega_n$ is part of a uniformly locally connected sequence then $\Omega_n$ is locally connected, and if $\Phi_n$ is part of an equicontinuous sequence then $\Phi_n$ is uniformly continuous.

\section{Convergence results}\label{sec:convergence}

The important result of this section is \thmref{snipesbarton}. This theorem states that if $f$ is the limit of a sequence of harmonic measure distribution functions of well-behaved domains, then $f$ is itself the harmonic measure distribution function of a well-behaved domain. This result may be viewed as a sufficient condition on~$f$ that forces $f=h_\Omega$ for some simply connected domain~$\Omega$.

\begin{theorem} \label{thm:snipesbarton}
Let $f$ be a right-continuous, nondecreasing function. Suppose that there exists a sequence of domains $\{\Omega_n\}_{n=1}^\infty$ such that
\begin{thmenumerate}
\item \label{item:snipesbarton:first} $\Omega_n$ is simply connected,
\item There exist numbers $M$, $\mu>0$ such that for all $n$, $B(0,\mu)\subset \Omega_n\subset B(0,M)$,
\item\label{item:snipesbarton:localconn} $\{\C\setminus\Omega_n\}_{n=1}^\infty$ is uniformly locally connected, and
\item\label{item:snipesbarton:conv} $h_{\Omega_n}\to f$ pointwise at all points of continuity of~$f$.
\end{thmenumerate}
Then there exists a domain $\Omega$ that is bounded and simply connected, contains $0$, and has locally connected complement, such that $f=h_\Omega$. Furthermore, there is a subsequence of $\{\Omega_n\}_{n=1}^\infty$ whose Riemann maps converge uniformly to the Riemann map of~$\Omega$.
\end{theorem}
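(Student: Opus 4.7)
The plan is to extract a limit domain via Riemann maps. Normalize the Riemann map $\Phi_n\colon\D\to\Omega_n$ by $\Phi_n(0)=0$ and $\Phi_n'(0)>0$. The inclusions $B(0,\mu)\subset\Omega_n\subset B(0,M)$ together with the Schwarz lemma give $\mu\le\Phi_n'(0)\le M$, and $|\Phi_n|\le M$ on $\D$. The first and hardest step is to show that the family $\{\Phi_n\}$ is equicontinuous on the closed disk $\overline\D$. Carath\'eodory's classical extension theorem, combined with local connectedness of each complement $\C\setminus\Omega_n$, guarantees that every $\Phi_n$ extends continuously to $\overline\D$; I expect the main technical obstacle to be a quantitative refinement showing that the uniform local connectedness hypothesis (c) upgrades the individual moduli of continuity to a single common modulus, depending only on $\mu$, $M$, and the modulus of local connectedness of the family $\{\C\setminus\Omega_n\}$.

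Once equicontinuity is in hand, Arzel\`a--Ascoli produces a subsequence, still denoted $\Phi_n$, converging uniformly on $\overline\D$ to a continuous function $\Phi\colon\overline\D\to\C$. The limit is holomorphic on $\D$ by Weierstrass, nonconstant because $\Phi'(0)\ge\mu>0$, and hence univalent on $\D$ by Hurwitz applied to the univalent $\Phi_n$. Set $\Omega:=\Phi(\D)$; then $\Omega$ is bounded, simply connected, and contains $0$, while univalence forces $\partial\Omega=\Phi(\partial\D)$, which is the continuous image of a circle and therefore locally connected. Because $\Phi$ is univalent on $\D$ with $\Phi(0)=0$ and $\Phi'(0)>0$, it is by definition the normalized Riemann map of $\Omega$, which establishes the claim about uniform convergence of Riemann maps.

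It remains to verify that $f=h_\Omega$. Conformal invariance of harmonic measure expresses each $h$-function as a pushforward of normalized Lebesgue measure on $\partial\D$: for $\Omega=\Phi(\D)$,
\[
h_\Omega(r)=\frac{1}{2\pi}\bigl|\{\theta\in[0,2\pi):|\Phi(e^{i\theta})|\le r\}\bigr|,
\]
and analogously for each $h_{\Omega_n}$ using $\Phi_n$. Uniform convergence of $\Phi_n$ to $\Phi$ on $\overline\D$ combined with dominated convergence gives $h_{\Omega_n}(r)\to h_\Omega(r)$ for every $r$ such that the level set $\{\theta:|\Phi(e^{i\theta})|=r\}$ has Lebesgue measure zero, and the set of exceptional $r$ is at most countable. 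Together with hypothesis (d), this shows that $f$ and $h_\Omega$ agree off a countable set; since both are right-continuous and nondecreasing, they must agree everywhere.
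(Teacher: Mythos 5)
Your proposal is correct and follows essentially the same route as the paper, which extracts a uniformly convergent subsequence of normalized Riemann maps and passes the $h$-functions to the limit. The equicontinuity step you flag as the main obstacle is exactly Pommerenke's Proposition~2.3 (uniformly locally connected complements imply equicontinuity of the normalized Riemann maps on $\overline\D$), which the paper simply cites, and your dominated-convergence argument for $h_{\Omega_n}(r)\to h_\Omega(r)$ off a countable set of radii is a self-contained substitute for the Snipes--Ward convergence theorem that the paper invokes at that point; the remaining steps (Arzel\`a--Ascoli, univalence of the limit via Hurwitz/the argument principle, and upgrading agreement on a co-countable set to equality via right-continuity) match the paper's proof.
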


\thmref{snipesbarton} follows immediately from the following two theorems. (\thmref{sccdconv} will be proven below; \thmref{snipes:conv} was proven in \cite[Theorem~1]{SniW08}.) Let $\D$ denote the unit disk $B(0,1)\subset\C$.

\begin{theorem}\label{thm:sccdconv}
Suppose that $\{\Omega_n\}_{n=1}^\infty$ is a sequence of domains in the complex plane such that the following conditions hold.
\begin{thmenumerate}
\item $\Omega_n$ is simply connected,
\item There exist numbers $M$, $\mu>0$ such that for all $n$, $B(0,\mu)\subset \Omega_n\subset B(0,M)$, and
\item $\{\C\setminus\Omega_n\}_{n=1}^\infty$ is uniformly locally connected.
\end{thmenumerate}
Then the Riemann maps $\Phi_n:\D\mapsto\Omega_n$ are equicontinuous, and so they have continuous extensions to the closed unit disk $\overline\D$. If $\Phi_n(0)=0$, $\Phi_n'(0)>0$ for all~$n$, then the sequence $\{\Phi_n\}_{n=1}^\infty$ contains a subsequence $\{\Phi_{n(k)}\}_{k=1}^\infty$ that converges uniformly on $\overline\D$ to some uniformly continuous map~$\Phi$. Furthermore, the limit $\Phi$ is itself the Riemann map of some domain~$\Omega$ with locally connected complement.
\end{theorem}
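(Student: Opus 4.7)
My plan is to prove the three assertions in sequence: equicontinuity of the $\Phi_n$ (which also yields their continuous extensions to $\overline\D$), extraction of a uniformly convergent subsequence, and identification of the subsequential limit as the Riemann map of a domain with locally connected complement.

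For the equicontinuity step I would adapt the classical argument that a Riemann map onto a domain with locally connected boundary extends continuously to the closed disk, checking that every constant can be chosen independent of~$n$. The assumption $B(0,\mu)\subset\Omega_n\subset B(0,M)$ combined with the Schwarz lemma applied once to $\Phi_n^{-1}$ on $B(0,\mu)$ and once to $\Phi_n/M$ gives $\mu\leq\Phi_n'(0)\leq M$ uniformly, and the Koebe distortion theorem then yields equicontinuity on compact subsets of~$\D$ automatically. The real work is at $\partial\D$. Given $z_0\in\partial\D$ and $\varepsilon>0$, I would consider the crosscuts $\gamma_r := \D\cap\partial B(z_0,r)$ for $r$ in a small interval. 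A standard length--area estimate of Wolff type guarantees that one may choose some such $r$ for which the length of $\Phi_n(\gamma_r)$ is arbitrarily small, with the estimate depending only on the area of $\Omega_n$, hence only on~$M$. The two endpoints of $\Phi_n(\gamma_r)$ lie close together on $\partial\Omega_n$, and by the uniform local connectedness of $\{\C\setminus\Omega_n\}$ they may be joined by a continuum in $\C\setminus\Omega_n$ of small diameter. The union of this continuum with $\Phi_n(\gamma_r)$ encloses a region whose diameter is controlled and which contains $\Phi_n(\D\cap B(z_0,r'))$ for every $r'<r$; the resulting modulus of continuity depends only on $\mu$, $M$, and the local connectedness modulus of $\{\C\setminus\Omega_n\}$, giving equicontinuity of the family $\{\Phi_n\}$.

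Once equicontinuity is in hand, the uniform bound $\abs{\Phi_n}\leq M$ and the Arzel\`a--Ascoli theorem on the compact space $\overline\D$ yield a subsequence converging uniformly to a continuous limit $\Phi:\overline\D\to\C$ with $\Phi(0)=0$. The restriction $\Phi|_\D$ is a locally uniform limit of univalent holomorphic maps with $\Phi'(0)=\lim\Phi_n'(0)\geq\mu>0$, so by Hurwitz's theorem $\Phi|_\D$ is univalent; hence $\Phi$ is the Riemann map onto the simply connected domain $\Omega:=\Phi(\D)$. Since $\Phi$ extends continuously to $\overline\D$, Carath\'eodory's theorem (or, more directly, the observation that $\partial\Omega=\Phi(\partial\D)$ is a continuous image of a circle) gives local connectedness of $\partial\Omega$; a short topological argument, using that $\Omega$ is bounded and simply connected so that $\C\setminus\Omega$ is the union of $\partial\Omega$ with the open connected exterior, then upgrades this to local connectedness of the full complement.

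The main obstacle is the uniformity in~$n$ of the crosscut/length--area estimate controlling $\Phi_n$ near $\partial\D$; once that single technical ingredient is in place, the remaining tools (Schwarz, Koebe, Arzel\`a--Ascoli, Hurwitz, Carath\'eodory) are classical and combine straightforwardly.
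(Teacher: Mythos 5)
Your proposal is correct and follows essentially the same route as the paper: equicontinuity from uniform local connectedness, then Arzel\`a--Ascoli, then univalence of the limit via Hurwitz (the paper writes out the equivalent winding-number integral), then local connectedness of the complement from the continuous boundary extension. The only difference is that you sketch a proof of the equicontinuity step via a length--area crosscut argument, whereas the paper simply cites it as Pommerenke's Proposition~2.3, and likewise the final step is cited as Pommerenke's Theorem~2.1 rather than derived from Carath\'eodory's theorem.
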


\begin{theorem}[Snipes and Ward] \label{thm:snipes:conv}
Let $\Omega$ and $\Omega_n$, $n\geq 1$, be simply connected domains containing the point $z_0=0$, with $\Omega\neq\C$ and $\Omega_n\neq\C$, and with harmonic measure distribution functions $h$ and $h_n$, respectively. Suppose that the normalized Riemann mappings $\Phi:\D\mapsto\Omega$, $\Phi(0)=0$, $\Phi'(0)>0$ and $\Phi_n:\D\mapsto\Omega_n$, $\Phi_n(0)=0$, $\Phi_n'(0)>0$ have continuous extensions to the closed unit disk $\overline\D$. If $\Phi_n\to\Phi$ pointwise on the boundary $\partial\D$, then $h_n\to h$ pointwise at all points of continuity of~$h$.
\end{theorem}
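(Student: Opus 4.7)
The plan is to use conformal invariance of harmonic measure to rewrite each $h_n(r)$ and $h(r)$ as a distribution function of $|\Phi_n|$ and $|\Phi|$ on $\partial\D$, and then to deduce convergence of $h_n$ from a standard measure-theoretic fact about distribution functions of pointwise-convergent sequences.

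First I would set up the conformal transfer. Since the Riemann map $\Phi_n\colon\D\to\Omega_n$ extends continuously to $\overline\D$, its boundary extension sends $\partial\D$ into $\partial\Omega_n$ (the image of a point in $\partial\D$ cannot lie in the open simply connected domain $\Omega_n$, as $\Phi_n$ is already a bijection from $\D$). Conformal invariance of harmonic measure, together with the fact that harmonic measure in $\D$ with basepoint $0$ is normalized arc-length $d\theta/(2\pi)$, then gives
\[
h_n(r)=\omega\bigl(0,\partial\Omega_n\cap\overline{B(0,r)},\Omega_n\bigr)=\frac{1}{2\pi}\bigl|\{\zeta\in\partial\D:|\Phi_n(\zeta)|\le r\}\bigr|,
\]
and similarly $h(r)=|\{\zeta\in\partial\D:|\Phi(\zeta)|\le r\}|/(2\pi)$. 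Writing $g_n(\zeta):=|\Phi_n(\zeta)|$ and $g(\zeta):=|\Phi(\zeta)|$, the problem becomes: show that the distribution functions $F_n(r):=|\{g_n\le r\}|$ converge to $F(r):=|\{g\le r\}|$ at every continuity point of $F$, given that $g_n\to g$ pointwise on $\partial\D$ (which is immediate from the hypothesis $\Phi_n\to\Phi$ on $\partial\D$).

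I would then prove this distribution-function lemma directly. Fix $\varepsilon>0$ and a continuity point $r$ of $F$. If $g(\zeta)\le r-\varepsilon$ and $g_n(\zeta)\to g(\zeta)$, then eventually $g_n(\zeta)\le r$, so $\{g\le r-\varepsilon\}\subseteq\liminf_n\{g_n\le r\}$; and if $g(\zeta)>r+\varepsilon$, then eventually $g_n(\zeta)>r$, so $\limsup_n\{g_n\le r\}\subseteq\{g\le r+\varepsilon\}$. Applying Fatou's lemma and its reverse on the finite measure space $\partial\D$ yields
\[
F(r-\varepsilon)\le\liminf_{n\to\infty}F_n(r)\le\limsup_{n\to\infty}F_n(r)\le F(r+\varepsilon).
\]
Letting $\varepsilon\to 0$ and using continuity of $F$ at $r$ gives $F_n(r)\to F(r)$. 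Since continuity points of $F$ coincide with continuity points of $h=F/(2\pi)$, this completes the proof.

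The main obstacle, if any, is the bookkeeping in the first step: one must confirm that the conformal pullback formula for harmonic measure is valid even when the boundary extension of $\Phi_n$ is not injective (as happens whenever $\partial\Omega_n$ is not a Jordan curve). This is handled by noting that the pullback formula comes from solving the Dirichlet problem for $1_F$ in $\Omega_n$ and pulling back to $\D$ via $\Phi_n$; the resulting harmonic function in $\D$ has nontangential boundary values $1_{\Phi_n^{-1}(F)\cap\partial\D}$ almost everywhere on $\partial\D$, and evaluating at $0$ reproduces the integral against $d\theta/(2\pi)$. After this routine verification, the remainder of the argument is the soft measure-theoretic convergence described above.
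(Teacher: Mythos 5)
Your proof is correct. Note that the paper itself does not prove this theorem---it is quoted from Snipes and Ward \cite[Theorem~1]{SniW08}---and your argument (using the continuous boundary extension to pull harmonic measure back to normalized arclength on $\partial\D$, so that $h_n$ becomes the distribution function of $\abs{\Phi_n}$ on the circle, then running the Fatou/reverse-Fatou sandwich $F(r-\varepsilon)\leq\liminf_n F_n(r)\leq\limsup_n F_n(r)\leq F(r+\varepsilon)$ at continuity points) is essentially the original Snipes--Ward argument, i.e., pointwise convergence on $\partial\D$ yields convergence in distribution of $\abs{\Phi_n}$ to $\abs{\Phi}$, including the necessary check that the pullback formula holds without boundary injectivity.
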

Because we wish to use \thmref{sccdconv}, \thmref{snipes:conv} is an appropriate convergence result to use to prove \thmref{snipesbarton}. However, \thmref{snipes:conv} is not the only known result of its type. As noted in \cite{BetS}, if the domains $\Omega_n$ are simply connected and $\Omega_n\to\Omega$ in the sense of Carath\'eodory, then $h_n(r)\to h(r)$ for almost all~$r$.

To prove \thmref{sccdconv}, we will need the following three theorems (taken from \cite[Theorem~2.1]{Pom}, \cite[Propositon~2.3]{Pom} and \cite[p.~245]{Rud87}, respectively).
\begin{theorem}\label{thm:pommer:localconn}
Let $\Phi$ map $\D$ conformally onto the bounded domain~$\Omega$. Then $\Phi$ has a continuous extension to~$\overline\D$ if and only if $\C\setminus\Omega$ is locally connected.
\end{theorem}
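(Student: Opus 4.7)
The plan is to prove both directions of this classical theorem of Carath\'eodory.

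For the forward direction, suppose $\Phi$ extends continuously to $\overline{\D}$. Since $\overline{\D}$ is compact and $\Phi$ is an open map on $\D$, one checks that $\Phi(\partial\D) = \partial\Omega$: the image lies in $\overline\Omega$, and no boundary point of $\D$ can map into the open set $\Omega$ without contradicting the fact that $\Phi$ is one-to-one on $\D$ with $\Phi(\D)=\Omega$. Since $\partial\D$ is a compact, connected, locally connected metric space, its continuous image $\partial\Omega$ is also locally connected (this is a classical consequence of uniform continuity on the compact domain $\overline\D$). Because $\Omega$ is simply connected and bounded, $\C\setminus\Omega$ is connected, and it decomposes as $\partial\Omega$ together with the open set $\C\setminus\overline\Omega$. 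An open subset of $\C$ is automatically locally connected, and local connectedness of $\partial\Omega$ yields local connectedness of $\C\setminus\Omega$ at its boundary points, so $\C\setminus\Omega$ is locally connected.

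For the backward direction, suppose $\C\setminus\Omega$ is locally connected. I would show that for each $\zeta\in\partial\D$ and each $\varepsilon>0$ there is some $\delta>0$ such that $\diam \Phi(\D\cap B(\zeta,\delta)) < \varepsilon$; combined with compactness of $\partial\D$ this gives a continuous extension to $\overline\D$. The tool is the length--area method applied to the circular crosscuts $\gamma_\rho = \D\cap\partial B(\zeta,\rho)$. By Cauchy--Schwarz,
\[
\int_0^1 \biggl(\int_{\gamma_\rho} \abs{\Phi'(z)}\,ds\biggr)^2 \frac{d\rho}{\rho}
\;\leq\; 2\pi \int_\D \abs{\Phi'}^2\,dA
\;=\; 2\pi\,\text{area}(\Omega) < \infty.
\]
It follows that for each small $r>0$ there is some $\rho=\rho(r)\in[r,\sqrt r\,]$ with $\ell(\Phi(\gamma_\rho))$ tending to $0$ as $r\to 0^+$. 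Each such $\Phi(\gamma_\rho)$ is a crosscut of $\Omega$ whose two endpoints $a_\rho,b_\rho$ lie on $\partial\Omega$ with $\abs{a_\rho-b_\rho}\to 0$.

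Now I invoke local connectedness of $\C\setminus\Omega$: for each $\varepsilon>0$, once $\abs{a_\rho-b_\rho}$ is small enough, there is a continuum $K_\rho\subset \C\setminus\Omega$ with $a_\rho,b_\rho\in K_\rho$ and $\diam K_\rho<\varepsilon$. The union $\Phi(\gamma_\rho)\cup K_\rho$ is a closed connected set of diameter at most $\ell(\Phi(\gamma_\rho)) + \varepsilon$ that separates $\C$ into two complementary components. The image $\Phi(\D\cap B(\zeta,\rho))$ is contained in one of them; since $0=\Phi(0)\in \Phi(\D\setminus B(\zeta,\rho))$ and $\rho$ is small (so $\zeta$ is far from $0$), the piece containing $\Phi(\D\cap B(\zeta,\rho))$ is the bounded one, and hence has diameter at most $\ell(\Phi(\gamma_\rho))+\varepsilon$. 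Letting $r\to 0$, this bound can be made less than $2\varepsilon$, proving the desired modulus-of-continuity estimate.

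The main obstacle is the backward direction. The forward direction is a short topological argument, but the backward direction requires both the length--area estimate (to produce short crosscut images) and a careful planar-topology argument (using local connectedness of the complement to close off each short arc into a small-diameter Jordan-type boundary, and then identifying which complementary component contains $\Phi(\D\cap B(\zeta,\rho))$). The uniformity of $\delta$ in $\zeta$ then follows from compactness of $\partial\D$, yielding equicontinuity and thus the continuous extension of $\Phi$ to $\overline\D$.
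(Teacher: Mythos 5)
The paper does not actually prove this statement: it is imported verbatim from Pommerenke's book (cited as \cite[Theorem~2.1]{Pom}, i.e., Carath\'eodory's continuity theorem) as one of three external tools used to prove Theorem~3.2. So there is no internal proof to compare you against; what you have written is an attempt at the classical theorem itself, and your overall strategy --- length--area to produce circular crosscuts with short images, then local connectedness of $\C\setminus\Omega$ to close each short crosscut into a small continuum --- is indeed the standard Pommerenke/Carath\'eodory line. The forward direction and the length--area computation are essentially right (the Cauchy--Schwarz estimate, the choice of $\rho\in[r,\sqrt r\,]$, and the endpoint limits on $\partial\Omega$ are all standard and correct).

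The genuine gap is in the separation step of the backward direction. You assert that $S=\overline{\Phi(\gamma_\rho)}\cup K_\rho$ ``separates $\C$ into two complementary components'' and that $\Phi(\D\cap B(\zeta,\rho))$ lies in the bounded one. A compact connected set need not separate the plane at all, and when it does, its complement may have many components; the actual content here is that the two pieces $U=\Phi(\D\cap B(\zeta,\rho))$ and $U'=\Phi(\D\setminus\overline{B(\zeta,\rho)})$ of $\Omega\setminus\Phi(\gamma_\rho)$ lie in \emph{different} components of $\C\setminus S$. This is not automatic: the crosscut separates $U$ from $U'$ only \emph{within} $\Omega$, and a path in $\C\setminus S$ could a priori run from $U$ to $U'$ by exiting $\Omega$ across boundary points not on $K_\rho$. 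The standard repair is Janiszewski's theorem applied to the closed sets $A_1=\widehat\C\setminus\Omega$ and $A_2=S$: their intersection is the continuum $K_\rho$ (connected), the union $A_1\cup A_2$ separates $U$ from $U'$ (its complement is exactly $U\sqcup U'$), and $A_1$ alone does not (its complement $\Omega$ is connected); hence $S$ separates them. Note that this is precisely where simple connectivity of $\Omega$ --- connectedness of $\widehat\C\setminus\Omega$ --- enters, and your backward direction never invokes it, which is a symptom of the missing step. If $K_\rho$ were known to be an arc you could instead quote the Jordan curve theorem, but a small continuum supplied by local connectedness need not contain an arc.

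Two smaller corrections. Your identification of the bounded component via ``$\rho$ is small, so $\zeta$ is far from $0$'' measures distance in $\D$, which says nothing about the image plane; the correct argument is that every bounded component of $\C\setminus S$ lies in the convex hull of $S$, hence has diameter at most $\diam S\leq \ell(\Phi(\gamma_\rho))+\varepsilon$, while $\dist(\Phi(0),\partial\Omega)>0$ is fixed, so once $\diam S$ is smaller than this distance, $\Phi(0)$, and with it $U'$, lies in the unbounded component, forcing $U$ into a bounded one of diameter at most $\diam S$. Also, the final appeal to compactness of $\partial\D$ is unnecessary: the $\delta$ of local connectedness (which in this paper's Definition~2.4 is already a uniform statement) and the length--area threshold are both independent of $\zeta$, so the modulus-of-continuity estimate is uniform as it stands.
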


\begin{theorem}\label{thm:pommer:equic}
Let $\Phi_n$ map $\D$ conformally onto $\Omega_n$ with $\Phi_n(0)=0$. Suppose that
\[B(0,\mu)\subset\Omega_n\subset B(0,M)\]
for all $n$. If $\{\C\setminus\Omega_n\}_{n=1}^\infty$ is uniformly locally connected, then $\{\Phi_n\}_{n=1}^\infty$ is equicontinuous on $\overline\D$.
\end{theorem}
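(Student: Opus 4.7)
The plan is to follow Pommerenke's length-area method. Since each $\C\setminus\Omega_n$ is individually locally connected (as a member of a uniformly locally connected sequence), \thmref{pommer:localconn} gives a continuous extension of each $\Phi_n$ to $\overline\D$; what is needed is a uniform modulus of continuity for the extended sequence.

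The interior of $\D$ is handled by Koebe distortion. The Koebe one-quarter theorem combined with $\Omega_n\subset B(0,M)$ yields $|\Phi_n'(0)|\leq 4M$, and the classical distortion bound $|\Phi_n'(w)|\leq|\Phi_n'(0)|(1+|w|)/(1-|w|)^3$ then gives a uniform pointwise bound on $|\Phi_n'|$ throughout $\{|w|\leq 3/4\}$. Thus $\{\Phi_n\}$ is equi-Lipschitz on that compact set, and the remaining task is equicontinuity near $\partial\D$.

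Fix $w\in\overline\D$ with $|w|\geq 1/2$ and $\delta\in(0,1/4)$. For small $r$ the set $\gamma_r(w):=\{z\in\D:|z-w|=r\}$ is an arc with both endpoints on $\partial\D$. Letting $\ell_n(r)$ denote the length of $\Phi_n(\gamma_r(w))$, Cauchy--Schwarz and univalence of $\Phi_n$ yield
\[\int_\delta^{\sqrt\delta}\frac{\ell_n(r)^2}{r}\,dr\leq 2\pi\iint_{\D\cap B(w,\sqrt\delta)}|\Phi_n'|^2\,dA\leq 2\pi\cdot\pi M^2.\]
Since $\int_\delta^{\sqrt\delta}dr/r=\tfrac{1}{2}\log(1/\delta)$, there exists some $r_n=r_n(w,\delta)\in(\delta,\sqrt\delta)$ with
\[\ell_n(r_n)\leq\frac{2\pi M}{\sqrt{\log(1/\delta)}}.\]
Given $\varepsilon>0$, I would choose $\delta$ small enough that $2\pi M/\sqrt{\log(1/\delta)}$ falls below the uniform local connectivity modulus of $\{\C\setminus\Omega_n\}$ corresponding to target diameter $\varepsilon/4$ in \dfnref{ULconnected}. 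The endpoints $p_n,q_n\in\partial\Omega_n$ of the crosscut $\Phi_n(\gamma_{r_n}(w))$ are then closer than this modulus, so uniform local connectivity produces a continuum $K_n\subset\C\setminus\Omega_n$ with $\diam K_n\leq\varepsilon/4$ joining $p_n$ and $q_n$. The union $J_n:=\Phi_n(\gamma_{r_n}(w))\cup K_n$ is a closed connected set with $\diam J_n\leq\varepsilon/2$. The final step is the topological claim that $V_n:=\Phi_n(\D\cap B(w,r_n))$ lies in a bounded component of $\C\setminus J_n$, giving $\diam V_n\leq\diam J_n\leq\varepsilon/2<\varepsilon$. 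Combined with the interior equi-Lipschitz bound, this yields the desired equicontinuity of $\{\Phi_n\}$ on $\overline\D$.

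The main obstacle is making the topological identification rigorous and uniform in~$n$. Since $\partial\Omega_n$ can be geometrically complicated, the specific boundary arc $\Phi_n(\partial\D\cap\overline{B(w,r_n)})\subset\partial\Omega_n$ forming the rest of $\partial V_n$ might wander far from $K_n$ even though they share the endpoints $p_n,q_n$. I would resolve this by examining the local picture near $p_n$: the crosscut $\Phi_n(\gamma_{r_n}(w))$ enters $\Omega_n$ from $p_n$ on the ``$V_n$-side'' of $J_n$, while $K_n$ exits into $\C\setminus\Omega_n$ on the opposite side, so $V_n$ is confined locally to one definite side of $J_n$ near $p_n$. Since $V_n$ is connected and disjoint from $J_n$, this local observation propagates globally, placing $V_n$ in a single component of $\C\setminus J_n$; boundedness of that component then follows because $0\in\Omega_n\setminus\overline{V_n}$ lies on the opposite side of the crosscut and, for $\delta$ small, far from $J_n$ itself, forcing $0$ into the unbounded component of $\C\setminus J_n$.
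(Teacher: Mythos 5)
Your proposal reconstructs, essentially verbatim, the proof behind the result the paper itself does not prove: \thmref{pommer:equic} is quoted from \cite[Proposition~2.3]{Pom}, and Pommerenke's argument is exactly your combination of Koebe distortion in the interior with the length--area selection of a short crosscut, capped off by the uniform-local-connectivity continuum and a plane-separation step. Your quantitative steps check out: $\abs{\Phi_n'(0)}\leq 4M$ from the one-quarter theorem, the Cauchy--Schwarz/area bound $\int_\delta^{\sqrt\delta}\ell_n(r)^2\,dr/r\leq 2\pi^2M^2$ from univalence and $\Omega_n\subset B(0,M)$, the resulting $\ell_n(r_n)\leq 2\pi M/\sqrt{\log(1/\delta)}$, and the observation that $0$ lies in the unbounded component of $\C\setminus J_n$ (every bounded complementary component of the continuum $J_n$ lies in its convex hull, while $\dist(0,J_n)\geq\mu-\diam J_n$ because $K_n\subset\C\setminus\Omega_n$ and the crosscut image stays within $\ell_n(r_n)$ of $p_n\in\partial\Omega_n$; so take $\varepsilon<\mu$).

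Two spots need repair. First, your claim that ``for small $r$ the set $\gamma_r(w)$ is an arc with both endpoints on $\partial\D$'' is false when $1-\abs{w}\geq\sqrt\delta$: then the selected circle lies wholly in $\D$, $\Phi_n(\gamma_{r_n}(w))$ is a closed curve of length at most $\ell_n(r_n)$, and $\Phi_n(B(w,r_n))$ lies inside it with diameter at most $\ell_n(r_n)/2$ --- an easier case needing no local connectivity, but one you must state. Second, in the topological step, the half you worry about first is actually vacuous: $V_n$ is connected and disjoint from $J_n$ (from the crosscut image by injectivity, from $K_n$ since $K_n\subset\C\setminus\Omega_n$), so it automatically lies in a single component of $\C\setminus J_n$; the genuine content is ruling out the unbounded component, and your ``local side of $J_n$ near $p_n$'' heuristic does not do this rigorously, since $K_n$ is an arbitrary continuum and $J_n$ need not be locally two-sided at $p_n$. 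The standard fix is Janiszewski's theorem. Any path in $\Omega_n$ from $v\in V_n$ to $0$ pulls back under $\Phi_n$ to a path from $\D\cap B(w,r_n)$ to $0\notin B(w,r_n)$, hence crosses the circle; so $v$ and $0$ are separated by $A_1\cup A_2$, where $A_1=J_n$ and $A_2=\C\setminus\Omega_n$ (taken in the sphere, with $\infty\in A_2$). Since $A_1\cap A_2=K_n\cup\{p_n,q_n\}=K_n$ is connected, and $A_2$ does not separate $v$ from $0$ (both lie in the domain $\Omega_n$), Janiszewski forces $A_1=J_n$ to separate them; as $0$ sits in the unbounded component, $V_n$ lies in a bounded one, of diameter at most $\diam J_n$. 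With these two patches your proof is complete and is the same length--area argument as the cited source, rather than an alternative route.
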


\begin{theorem}[Arzela--Ascoli]\label{thm:arzela}
Suppose that $\{\Phi_n\}_{n=1}^\infty$ is a sequence of pointwise bounded, equicontinuous complex functions on $\overline\D$. Then there is a subsequence of the $\Phi_n$ that converges uniformly on~$\overline\D$.
\end{theorem}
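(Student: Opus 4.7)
The plan is to give the standard proof of the Arzela--Ascoli theorem via a diagonal argument, exploiting the separability and compactness of $\overline\D$ together with the two hypotheses (pointwise boundedness and equicontinuity).

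First I would fix a countable dense subset $\{z_k\}_{k=1}^\infty$ of $\overline\D$, for instance the points with rational real and imaginary parts lying in $\overline\D$. Since $\{\Phi_n(z_1)\}_{n=1}^\infty$ is a bounded sequence in $\C$, the Bolzano--Weierstrass theorem produces a subsequence $\{\Phi_{n_{1,j}}\}_{j=1}^\infty$ such that $\Phi_{n_{1,j}}(z_1)$ converges as $j\to\infty$. Applying Bolzano--Weierstrass again to $\{\Phi_{n_{1,j}}(z_2)\}_{j=1}^\infty$ yields a sub-subsequence $\{\Phi_{n_{2,j}}\}_{j=1}^\infty$ convergent at $z_1$ and $z_2$. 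Iterating, I obtain for each $k\geq 1$ a subsequence $\{\Phi_{n_{k,j}}\}_{j=1}^\infty$ of $\{\Phi_{n_{k-1,j}}\}_{j=1}^\infty$ such that $\Phi_{n_{k,j}}(z_i)$ converges for each $i\leq k$. Taking the diagonal subsequence $\Psi_k:=\Phi_{n_{k,k}}$, one sees that $\Psi_k(z_i)$ converges for every $i$, because for each fixed $i$ the tail $\{\Psi_k\}_{k\geq i}$ is a subsequence of $\{\Phi_{n_{i,j}}\}_{j=1}^\infty$.

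Next I would upgrade this pointwise convergence on the dense set to uniform convergence on $\overline\D$ using equicontinuity. Given $\varepsilon>0$, choose $\delta>0$ from the equicontinuity hypothesis so that $\abs{x-y}<\delta$ implies $\abs{\Phi_n(x)-\Phi_n(y)}<\varepsilon/3$ for all $n$. Since $\overline\D$ is compact and $\{z_k\}$ is dense, finitely many of the points, say $z_{k_1},\dots,z_{k_N}$, have the property that every $z\in\overline\D$ lies within distance $\delta$ of some $z_{k_m}$. By pointwise convergence of $\Psi_k$ at each of the finitely many points $z_{k_1},\dots,z_{k_N}$, there exists $K$ such that $\abs{\Psi_j(z_{k_m})-\Psi_\ell(z_{k_m})}<\varepsilon/3$ for all $j,\ell\geq K$ and all $m\leq N$. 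For any $z\in\overline\D$, choosing $m$ with $\abs{z-z_{k_m}}<\delta$ and combining the three estimates through the triangle inequality yields $\abs{\Psi_j(z)-\Psi_\ell(z)}<\varepsilon$ for all $j,\ell\geq K$. Hence $\{\Psi_k\}$ is uniformly Cauchy on $\overline\D$, and since $\C$ is complete it converges uniformly on $\overline\D$ to a continuous limit.

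There is no real obstacle here: both ingredients are classical, and the only delicate bookkeeping is the passage from the diagonal subsequence to uniform convergence, where one must be careful to choose the finite cover $\{z_{k_1},\dots,z_{k_N}\}$ \emph{before} selecting the index $K$, so that the three-epsilon argument goes through with a single $K$ working for all $z\in\overline\D$ simultaneously. Compactness of $\overline\D$ is what makes this possible.
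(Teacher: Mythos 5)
Your proof is correct, and it is essentially the paper's proof: the paper states this theorem as a known result, citing Rudin's \emph{Real and Complex Analysis} (p.~245) rather than proving it, and the classical proof given there is exactly your argument (countable dense subset, Bolzano--Weierstrass diagonalization, then the three-epsilon estimate over a finite $\delta$-net supplied by compactness of $\overline\D$). Nothing is missing, and your remark about fixing the finite net before choosing $K$ correctly identifies the one delicate point.
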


\begin{proof}[Proof of \thmref{sccdconv}]
Theorems~\ref{thm:pommer:equic} and~\ref{thm:arzela} imply that a subsequence of $\{\Phi_n\}_{n=1}^\infty$ converges uniformly on $\overline\D$ to some map~$\Phi$.
Because $\{\Phi_n\}_{n=1}^\infty$ is equicontinuous, $\Phi$ must be uniformly continuous. We need only show that $\Phi$ is the Riemann map of some domain~$\Omega$; \thmref{pommer:localconn} will then imply that $\C\setminus\Omega$ is locally connected.

To show that $\Phi$ is a Riemann map, we must show that $\Phi$ is analytic and injective and that $\Phi(\D)$ is open and simply connected.

By \cite[p.~176]{Ahl77}, if $\Phi_n\to \Phi$ uniformly in some domain and $\Phi_n$ is analytic, then $\Phi$ is also analytic and $\Phi_n'\to \Phi'$ uniformly on compact subsets. (This fact may be easily seen from the Cauchy integral formulas.) By \cite[p.~132]{Ahl77}, a nonconstant analytic function maps open sets onto open sets. Also, if $\Phi$ is one-to-one, the preimage of any loop in $\Omega=\Phi(\D)$ must be a loop in $\D$; since $\D$ is simply connected, loops are contractible, and so $\Phi(\D)$ must be simply connected as well.

So we need only show that $\Phi$ is one-to-one. Because $\Phi_n(\D) = \Omega_n\supset B(0,\mu)$ and $\Phi_n\to\Phi$ uniformly, $\Phi$ cannot be a constant. Thus, for any fixed $z\in\D$, the zeros of $\Phi-\Phi(z)$ must be isolated. Let $\gamma$ be any Jordan curve in $\D$ with $z$ in its interior, and with $\abs{\Phi-\Phi(z)}\geq\varepsilon>0$ on~$\gamma$ for some~$\varepsilon$. Consider
\[\frac{1}{2\pi i} \oint_\gamma \frac{\Phi'(\zeta)}{\Phi(\zeta)-\Phi(z)} \,d\zeta.\]
This quantity is equal to the number of zeros of $\Phi-\Phi(z)$ in the interior of $\gamma$. But
\[1=\frac{1}{2\pi i} \oint_\gamma \frac{\Phi_n'(\zeta)}{\Phi_n(\zeta)-\Phi_n(z)} \,d\zeta\]
for all $n$, and so since $\Phi_n\to\Phi$ and $\Phi_n'\to\Phi'$ uniformly on $\gamma$, we must have that
\[1=\frac{1}{2\pi i} \oint_\gamma \frac{\Phi'(\zeta)}{\Phi(\zeta)-\Phi(z)} \,d\zeta.\]
So $\Phi$ is one-to-one.
\end{proof}

\section{Circle domains and a sufficient condition \texorpdfstring{for $f$ to be an $h$-function}{}} \label{sec:sufficient}

In this section, we will provide a more restrictive, but easier to check, condition on a function $f$ that will force it to arise as the harmonic measure distribution function of some domain.

We begin by quoting a known existence result.
\begin{theorem}[{\cite[Theorem~2]{SniW05}}]\label{thm:snipes:circle}
Let $f(r)$ be a right-continuous step function, increasing from~$0$ to~$1$, with its discontinuities at $r_0$, $r_1,\,\dots\,, r_{n}$, where $0<r_0<r_1<\dots<r_{n}$. Then there exists a circle domain $X$ with $n$ arcs whose harmonic measure distribution function $h_X(r)$ is equal to $f(r)$. The radii of the $n$ arcs and of the boundary circle in~$X$ are given by $r_0$, $r_1$, $r_2,\,\ldots\,, r_{n-1}$ and by $r_{n}$ respectively.
\end{theorem}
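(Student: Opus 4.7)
The plan is to realize $f$ as $h_X$ for a circle domain $X$ whose $n$ inner boundary arcs $A_0, \dots, A_{n-1}$ lie at radii $r_0 < r_1 < \dots < r_{n-1}$ and whose outer boundary circle has radius $r_n$; these radii are already forced by the locations of the discontinuities of~$f$. What remains is to choose the arclengths $\alpha_0, \dots, \alpha_{n-1} \in (0, 2\pi)$ so that the jump of $h_X$ at $r_j$ matches the jump of $f$. Writing $s_j := f(r_j) - \lim_{r \to r_j^-} f(r)$, we have $s_j > 0$ and $\sum_{j=0}^n s_j = 1$, and the equation $h_X = f$ reduces to $\omega(0, A_j, X) = s_j$ for $j = 0, \dots, n-1$; the outer jump $s_n$ then follows automatically because the total harmonic measure is $1$.

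First I would establish continuity. Writing $X(\alpha)$ for the circle domain with arclengths $\alpha = (\alpha_0, \dots, \alpha_{n-1})$, define
\[\Psi(\alpha) := \bigl(\omega(0, A_0, X(\alpha)), \dots, \omega(0, A_{n-1}, X(\alpha))\bigr).\]
Continuity of $\Psi$ on $[0, 2\pi]^n$ follows from standard stability of harmonic measure under Carath\'eodory-kernel convergence of the (simply connected) components of $X(\alpha)^c$, together with the conformal-invariance formulation in \bartoneqref{equation}{eqn:hmeasure}. Next I would analyze the degenerate limits: when $\alpha_j = 0$ the arc $A_j$ collapses to a point and $\omega(0, A_j, X) = 0$; when $\alpha_j \to 2\pi$ the arc closes into a full circle $\partial B(0, r_j)$, which disconnects $0$ from everything outside, forcing $\omega(0, A_k, X(\alpha)) \to 0$ for all $k > j$ by monotonicity in the domain. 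In particular, in the limit $\alpha_{n-1} \to 2\pi$ with the inner arclengths held fixed, $\sum_{k=0}^{n-1} \omega(0, A_k, X) \to 1$.

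With continuity and boundary behavior in hand, the main step is to prove that $\Psi$ surjects onto the open simplex
\[\Delta := \Bigl\{ s \in (0,1)^n : \sum_{j=0}^{n-1} s_j < 1 \Bigr\}.\]
I would argue this by a degree-theoretic / Brouwer-type topological argument: $\Psi$ maps the cube $[0, 2\pi]^n$ continuously into the closed simplex $\overline\Delta$ and, by the above degenerate-limit analysis, sends each face of the cube into the corresponding face of $\overline\Delta$ (the face $\{\alpha_j = 0\}$ into the face $\{s_j = 0\}$, and the face $\{\alpha_j = 2\pi\}$ into the face $\sum_{k \leq j} s_k = 1$). Thus $\Psi$ restricted to the boundary has nonzero degree as a map $\partial [0, 2\pi]^n \to \partial \overline\Delta$, which by standard topological reasoning forces $\Psi$ to cover the interior $\Delta$. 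Applying this to the target $(s_0, \dots, s_{n-1}) \in \Delta$ produces the desired arclengths.

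The principal obstacle is exactly the coupling between arcs: shifting $\alpha_j$ perturbs \emph{every} component of $\Psi$, not just the $j$-th, so a naive one-variable-at-a-time IVT argument fails. Overcoming this requires the global topological approach above, which in turn requires a careful understanding of the singular limits $\alpha_j \in \{0, 2\pi\}$ where the geometry of $X(\alpha)$ degenerates. A secondary technical point is verifying that the limits in these degenerate cases really do land on the expected faces of $\overline\Delta$ with the correct orientations for the degree computation; this is essentially a matter of checking that harmonic measure is continuous up to (and inclusive of) the degenerate limits, which can be done by comparing $X(\alpha)$ to explicit circle domains from inside and outside via monotonicity in the domain.
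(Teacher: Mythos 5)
First, a point of order: the paper does not prove this statement---it is imported verbatim from \cite{SniW05}, so there is no in-paper argument to measure your proposal against. The comparison below is therefore with the cited source and with the internal soundness of your own argument. Your global scheme---fix the radii, regard the arclengths as a point of the cube $[0,2\pi]^n$, and show that the map $\Psi$ to the vector of harmonic measures covers the open simplex by a degree argument---is viable, and your face analysis is right: $\alpha_j=0$ forces $s_j=0$ (a single point has zero capacity), and $\alpha_j=2\pi$ forces $\sum_{k\le j}s_k=1$, so $\Psi$ does carry $\partial[0,2\pi]^n$ into $\partial\overline\Delta$. This is a genuinely different route from the inductive, arc-by-arc construction of \cite{SniW05}; the trade-off is that the cited construction yields much more flexibility (arcs centred at preassigned angles, disconnected boundary sets, and so on---flexibility the present paper exploits in \secref{unique}), whereas your argument produces existence only, which is all the theorem asks for.

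Two steps in your write-up are asserted where real work is required. First, the clause ``thus $\Psi$ restricted to the boundary has nonzero degree'' does not follow from the face correspondence alone. The clean fix: the face conditions are convex constraints, so the straight-line homotopy from $\Psi$ to the stick-breaking homeomorphism $L(\alpha)_j=(\alpha_j/2\pi)\prod_{k<j}(1-\alpha_k/2\pi)$ keeps the image of $\partial[0,2\pi]^n$ inside $\partial\overline\Delta$ for all times, whence $\deg\Psi=\deg L=\pm1$ at every point of $\Delta$. (Note that a naive Poincar\'e--Miranda argument fails here: on the face $\alpha_j=2\pi$ one does \emph{not} in general have $\omega(0,A_j,X)\ge s_j$, because the inner arcs may already absorb too much mass; so the global homotopy really is needed.) Second, continuity of $\Psi$ up to and including the degenerate faces is the technical heart of the approach and cannot be dispatched by citing Carath\'eodory-kernel convergence, which is a statement about simply connected domains and univalent maps; here the complementary components are arcs degenerating to points or closing into full circles. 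The limit $\alpha_j\to 2\pi$ requires a quantitative estimate that the probability of escaping through a shrinking gap in $\partial B(0,r_j)$ tends to zero---precisely the kind of extremal-length estimate the paper develops in \lemref{channelstraight} and \lemref{curvechannel}---and the limit $\alpha_j\to 0$ requires that the harmonic measures of the \emph{other} arcs converge to their values in the domain with $A_j$ deleted, not merely that $\omega(0,A_j,\cdot)\to0$. Both facts are true and provable, so I would classify your proposal as a correct alternative strategy with two substantive but fixable gaps, rather than a wrong approach.
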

Monotonic functions can be approximated by step functions. Therefore, if $f$ is a nondecreasing function, then there is some sequence of circle domains $\{X_n\}_{n=1}^\infty$ such that $h_{X_n}\to f$. This sequence does not satisfy the conditions of \thmref{snipesbarton}. However, we will find conditions (\thmref{main}) on the sequence $\{X_n\}_{n=1}^\infty$ such that we may construct a sequence $\{\Omega_n\}_{n=1}^\infty$ of blocked circle domains that does.

Informally, the conditions are that the circle domains $X_n$ do not have too many short arcs, and that the function $f$ increases fast enough.

\thmref{main} provides a sufficient condition for \thmref{snipesbarton} to hold, which may be checked by examining $\{X_n\}_{n=1}^\infty$ rather than all sequences of simply connected domains $\{\Omega_n\}_{n=1}^\infty$ that satisfy $h_{\Omega_n}\to f$.

Our proof of \thmref{main} relies on Lemmas~\ref{lem:deriv}, \ref{lem:radial} and~\ref{lem:unifcctd}. We defer their precise statements and proofs to \secref{circle}.

We begin by fixing some terminology.
Suppose that we have a sequence $\{X_n\}_{n=1}^\infty$ of circle domains, and a sequence $\{\Omega_n\}_{n=1}^\infty$ of blocked circle domains such that $\Omega_n\subset X_n$ and $\partial\Omega_n\setminus\partial X_n$ is a union of gates.
In this section and in \secref{functions} we will use the following symbols to describe such sequences of domains. (In \secref{circle} we will often discuss single domains rather than sequences of domains; when doing so we will use the same notation without the $n$ subscript.)
\begin{itemize}
\item $X_n$ denotes the $n$th circle domain.
\item $\Omega_n$ denotes the $n$th blocked circle domain.
\item $A_{n,k}$ denotes a boundary arc of $X_n$; the innermost arc is $A_{n,0}$, the next innermost arc is $A_{n,1}$, and so on.
\item $r_{n,k}$ denotes the radius of $A_{n,k}$.
\item $\psi_{n,k}$ denotes half the arclength of $A_{n,k}$.
\item $\phi_{n,k}$ denotes the angle (from the real axis) of a gate of~$\Omega_n$ lying between $A_{n,k}$ and $A_{n,k+1}$.
\item $\chi_{n,k}$ denotes the inset angle of that gate.
\item $\eta_{n,j,k}$ measures the depth of the shortest arc between $A_{n,j}$ and $A_{n,k}$.
\item $\theta_{n,j,k}$ measures the depth of the deepest gate between $A_{n,j}$ and $A_{n,k}$.
\end{itemize}
We emphasize that the numbering of boundary arcs and gates is to begin at zero rather than one. See \figref{param} for an illustration of $r_{n,k}$, $\psi_{n,k}$ and $\phi_{n,k}$,~$\chi_{n,k}$.
We define $\chi_{n,k}$ by the equation
\begin{equation}
\label{dfn:chi}
\phi_{n,k}+\chi_{n,k}=\min(\psi_{n,k},\psi_{n,k+1}).\end{equation}
We require that $\phi_{n,k}$ and $\chi_{n,k}$ both be nonnegative; this implies that $\phi_{n,k}\leq\min(\psi_{n,k},\psi_{n,k+1})$ and $\chi_{n,k}\leq\min(\psi_{n,k},\psi_{n,k+1})$.

\begin{figure}
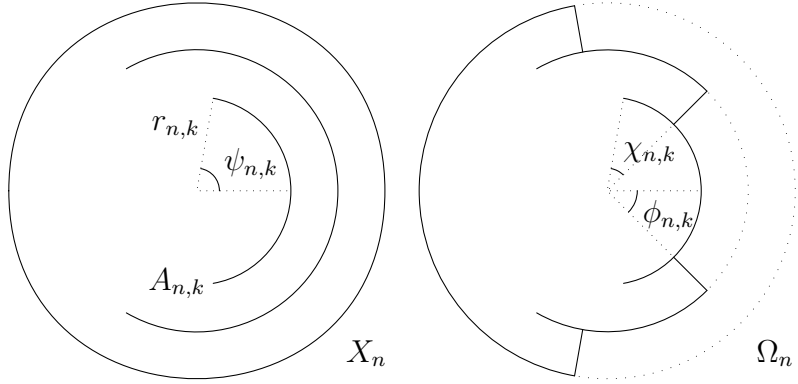

\mypic{2}
\caption{Parameters $r_{n,k}$, $\psi_{n,k}$, and $\phi_{n,k}$, $\chi_{n,k}$, used to describe circle domains $X_n$ and blocked circle domains~$\Omega_n$. Here $n=2$ and $k=0$.}
\label{fig:param}
\end{figure}

We now clarify and make precise the definition of $\eta_{n,j,k}$. Let $A_{n,j}$ and $A_{n,k}$ be any two boundary arcs of~$X_n$. Consider the arcs $A_{n,l}$, $j<l<k$, lying between $A_{n,j}$ and $A_{n,k}$. In many of the theorems to come, we will need to consider the shortest of these arcs. The important value will usually not be the arclength of the shortest arc $A_{n,l}$, but its depth in the channel outlined by $A_{n,j}$ and $A_{n,k}$.
We define
\begin{equation}
\label{dfn:eta}
\eta_{n,j,k}:=\min(\psi_{n,j},\psi_{n,k})-\min_{j\leq l\leq k}(\psi_{n,l}).
\end{equation}
This number is zero if none of the inner arcs are shorter than the outer arcs, and otherwise is half the difference in arclength between the shorter of the outer arcs and the shortest of the inner arcs. (We divide the arc length difference by two because there are two ends to the channel and the shortest arc is inset in both of them.)

Similarly,
\begin{equation}
\label{dfn:theta}
\theta_{n,j,k}:=\min(\psi_{n,j},\psi_{n,k})-\min_{j\leq l< k}(\phi_{n,l})
\end{equation}
measures the depth of the deepest gate in $\Omega_n$ between $A_{n,j}$ and~$A_{n,k}$.
See \figref{moreparam} for an illustration of $\eta_{n,j,k}$ and $\theta_{n,j,k}$.

We observe that we can bound $\theta_{n,j,k}$ by $\eta_{n,j,k}$ and $\chi_{n,m}$ for $j\leq m<k$.
By \bartoneqref{formula}{dfn:chi}, if $j\leq m<k$ then
\[\phi_{n,m}=\min(\psi_{n,m},\psi_{n,m+1})-\chi_{n,m}
\geq \min_{j\leq l\leq k} \psi_{n,l}-\chi_{n,m}
\geq \min_{j\leq l\leq k} \psi_{n,l}-\max_{j\leq l<k}\chi_{n,l}.\]
By \bartoneqref{formula}{dfn:eta},
\begin{align*}
\eta_{n,j,k}
&=\min(\psi_{n,j},\psi_{n,k})-\min_{j\leq l\leq k}\psi_{n,l}
\\&=\min(\psi_{n,j},\psi_{n,k})-\min_{j\leq l\leq k}\psi_{n,l}
+\max_{j\leq l<k}\chi_{n,l}
-\max_{j\leq l<k}\chi_{n,l}.
\end{align*}
Combining these formulas we have that
\[\eta_{n,j,k}\geq
\min(\psi_{n,j},\psi_{n,k})
-\min_{j\leq m\leq k}\phi_{n,m}
-\max_{j\leq l<k}\chi_{n,l}.\]
By \bartoneqref{formula}{dfn:theta}, the right-hand side is equal to $\theta_{n,j,k}-\max_{j\leq l<k}\chi_{n,l}$, and so
\begin{equation}
\label{eqn:thetaeta}
\theta_{n,j,k}
\leq \eta_{n,j,k} + \max_{j\leq l<k}\chi_{n,l}.
\end{equation}

\begin{figure}
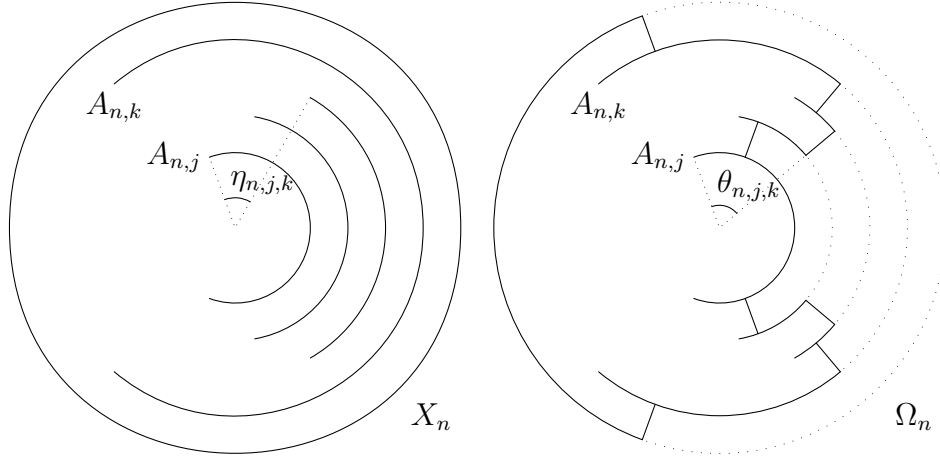

\mypic{7}
\caption{The angles $\theta_{n,j,k}$ and $\eta_{n,j,k}$ used to describe circle domains $X_n$ and blocked circle domains~$\Omega_n$. Here $n=4$, $j=0$, and $k=3$.}
\label{fig:moreparam}
\end{figure}

We now consider functions. Suppose that $f$ is a \emph{candidate for a harmonic measure distribution function}, meaning that $f$ is a right-continuous function defined on $\R_+$ that is 0 on $(0,\mu)$, is nondecreasing on $[\mu,M]$, is 1 on $[M,\infty)$, and satisfies $0<f<1$ on $(\mu,M)$, for some numbers $0<\mu<M$.

Define the \emph{minimal secant slope} $\alpha$ of $f$ by
\begin{equation}
\label{eqn:derivmin}
\alpha:=(M-\mu)\inf\biggl\{\frac{f(\rho_2)-f(\rho_1)}{\rho_2-\rho_1}
\biggm|\mu\leq\rho_1<\rho_2\leq M\biggr\}.
\end{equation}
The infimum in this definition is the infimum of the slopes of secant lines to the graph of~$f$ between $\mu$ and~$M$; multiplying by $(M-\mu)$ normalizes this number so that $0\leq \alpha\leq 1$.

We fix
\[r_{n,k} := \mu + (M-\mu)\frac{k}{n}\quad \text{for $0\leq k\leq n$,}\]
 so that $r_{n,0}=\mu$ and $r_{n,n}=M$.
Define $f_n(r)$ by
\[f_n(r):=\begin{cases}
0, & 0<r<r_{n,0};\\
f(r_{n,k}), & r_{n,k}\leq r <r_{n,k+1};\\
1, & r_{n,n}\leq r<\infty.\end{cases}\]
Thus, $f_n$ is a right-continuous nondecreasing step function and approximates $f$ from below. Let $X_n$ be a circle domain with boundary arcs at radii $r_{n,k}$ and with $h_{X_n}=f_n$; by \thmref{snipes:circle} such an $X_n$ exists. Then $h_{X_n}=f_n\to f$ pointwise at all points of continuity of~$f$. We remark that $\psi_{n,n}=\pi$, that $\psi_{n,0}=0$ if $f$ is continuous at~$\mu$, and that if $\alpha>0$ then $\psi_{n,k}>0$ for all $k>0$.

We now show that if the domains $X_n$ do not have too many short arcs, and if the function $f$ increases fast enough, then $f$ is the harmonic measure distribution function of some bounded, simply connected domain~$\Omega$.

\begin{theorem}\label{thm:main} Suppose that the function $f$ is a candidate for a harmonic measure distribution function. Let $\alpha$ be the minimal secant slope given by \bartoneqref{equation}{eqn:derivmin}, and define the circle domains $X_n$ as above.

Let $\{\kappa_n\}_{n=1}^\infty$ be any sequence of numbers, for example $\kappa_n=((M-\mu)/(\mu n)) \log n $, such that
\begin{enumerate}
\item \label{item:kappasmall} $\displaystyle\lim_{n\to\infty} \kappa_n=0$, and
\item \label{item:kappabig} $\displaystyle\lim_{n\to\infty} n\exp\left(-\frac{\pi\mu}{2(M-\mu)}n\kappa_n\right)= 0.$
\end{enumerate}
Suppose that
\begin{enumerate}
\addtocounter{enumi}{2}
\item \label{item:derivpos} the minimal secant slope $\alpha>0$, and
\item \label{item:fewshortarcs} ${\displaystyle\lim_{n\to\infty} }\sigma_n/\sqrt n= 0$, where $\sigma_n$ is the number of boundary arcs of $X_n$ with arclength at most $2\kappa_n$.
\end{enumerate}
Then there exists a sequence $\{\Omega_n\}_{n=1}^\infty$ of blocked circle domains, satisfying the conditions \eqref{item:snipesbarton:first}--\eqref{item:snipesbarton:localconn} of \thmref{snipesbarton}, such that $h_{\Omega_n}\to f$ pointwise at all points of continuity of~$f$. So by \thmref{snipesbarton}, $f$ is the harmonic measure distribution function of some bounded, simply connected domain~$\Omega$.
\end{theorem}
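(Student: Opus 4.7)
The approach is to construct an explicit sequence $\{\Omega_n\}$ of blocked circle domains with $\Omega_n\subset X_n$, and then verify each of the four hypotheses of \thmref{snipesbarton}. For each pair of consecutive arcs $A_{n,k}$, $A_{n,k+1}$ (with $0\leq k<n$), I would set the gate depth to
\[\chi_{n,k}:=\min(\kappa_n,\psi_{n,k},\psi_{n,k+1}),\]
which by \bartoneqref{equation}{dfn:chi} forces $\phi_{n,k}=\min(\psi_{n,k},\psi_{n,k+1})-\chi_{n,k}$. The hypothesis $\alpha>0$ guarantees $\psi_{n,k}>0$ for all $k\geq 1$ (with a quantitative lower bound obtainable from \lemref{deriv}), so the prescription is well-defined. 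The resulting $\Omega_n$ is by definition a blocked circle domain (hence simply connected), and $r_{n,0}=\mu$, $r_{n,n}=M$ yield $B(0,\mu)\subset\Omega_n\subset B(0,M)$, handling the first two hypotheses of \thmref{snipesbarton}.

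For uniform local connectivity of $\{\C\setminus\Omega_n\}$, I would appeal to \lemref{unifcctd}. Two nearby points in $\C\setminus\Omega_n$ can be joined along a single arc, across a single gate, or through the exterior of $\overline{B(0,M)}$; the only troublesome configuration is a pair of points on distinct arcs separated by a thin annular channel of radial width $(M-\mu)/n$, joinable only by crossing a nearby gate of sufficient depth. That depth equals $\kappa_n$ whenever both bounding arcs are long (i.e.\ $\psi\geq\kappa_n$), so the only failures occur at channels adjacent to short arcs, of which there are at most $2\sigma_n$. The pair of conditions $\kappa_n\to 0$ and $\sigma_n/\sqrt n\to 0$ supplies exactly the quantitative input that \lemref{unifcctd} should need to produce a common modulus of local connectivity independent of $n$.

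Convergence $h_{\Omega_n}\to f$ at continuity points follows from the triangle inequality
\[|h_{\Omega_n}(r)-f(r)|\leq |h_{\Omega_n}(r)-h_{X_n}(r)|+|f_n(r)-f(r)|\]
together with $h_{X_n}=f_n\to f$ at points of continuity of $f$ by construction. Since $\Omega_n\subset X_n$ and they share the arcs $A_{n,k}$, the additivity inequality from \secref{dfn} bounds $|h_{\Omega_n}(r)-h_{X_n}(r)|$ by the harmonic measure at $0$ of the union of all gates of $\Omega_n$. I would split those gates into \emph{deep} ones (with $\chi_{n,k}=\kappa_n$, adjacent to two long arcs) and \emph{shallow} ones (numbering at most $2\sigma_n$). \lemref{radial} is expected to bound the harmonic measure of each deep gate by $C\exp\!\bigl(-\tfrac{\pi\mu}{2(M-\mu)}\,n\,\chi_{n,k}\bigr)$, obtained via $z\mapsto\log z$ applied to the thin annular sector of inner radius $\geq\mu$ and radial width $(M-\mu)/n$; summing over at most $n$ deep gates gives $nC\exp\!\bigl(-\tfrac{\pi\mu}{2(M-\mu)}n\kappa_n\bigr)$, which vanishes by the second hypothesis on $\kappa_n$. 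Each shallow gate obeys a weaker Beurling-type bound of order $1/\sqrt n$, so that the total shallow contribution $O(\sigma_n/\sqrt n)$ vanishes by the hypothesis on $\sigma_n$.

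Assembling these estimates, \thmref{snipesbarton} produces the desired bounded simply connected domain $\Omega$ with $f=h_\Omega$. The principal obstacle I anticipate is the convergence step: pinning down the exponential constant in \lemref{radial} so it matches the rate in the $\kappa_n$ assumption, and simultaneously extracting the weaker $O(1/\sqrt n)$ bound that makes the shallow-gate total summable. Both estimates should reduce via $z\mapsto\log z$ to classical harmonic-measure calculations in a rectangle, but the two regimes must be handled separately in order to engage the hypotheses on $\kappa_n$ and on $\sigma_n$ independently rather than either in isolation.
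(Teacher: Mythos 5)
Your construction of $\Omega_n$ (setting $\chi_{n,k}=\min(\psi_{n,k},\psi_{n,k+1},\kappa_n)$) and your treatment of the convergence $h_{\Omega_n}\to f$ match the paper's proof: the paper likewise splits the gates into those with $\phi_{n,k}>0$ (at most $n$ of them, each with harmonic measure at most $\frac{32}{\pi}\exp(-\frac{\pi\mu}{2(M-\mu)}n\kappa_n)$ by \lemref{curvechannel}) and the at most $2\sigma_n$ gates on the positive real axis (each bounded by $\frac{2}{\pi}\sqrt{(M-\mu)/(n\mu)}$ via \lemref{altgatebound}), and the two hypotheses on $\kappa_n$ and $\sigma_n$ kill the two sums exactly as you describe.

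However, there is a genuine gap in your uniform local connectivity step. You assert that the only troublesome configurations are channels adjacent to the $\sigma_n$ arcs of arclength at most $2\kappa_n$, and that $\sigma_n/\sqrt{n}\to 0$ supplies the needed input. Neither claim is right. First, $\sigma_n$ plays no role in local connectivity: uniform local connectivity is a statement about \emph{every} pair of nearby points, so even a single deep recess in a single $\Omega_n$ destroys it --- counting the offending arcs does not help. Second, the dangerous arcs are not only those shorter than $2\kappa_n$: an arc $A_{n,l}$ of arclength, say, $3\kappa_n$ sandwiched between two nearly-full-circle arcs $A_{n,j}$, $A_{n,k}$ at radially close $r_{n,j}<r_{n,l}<r_{n,k}$ forces the adjacent gates to plunge to depth roughly $\min(\psi_{n,j},\psi_{n,k})-\psi_{n,l}$, and two points on $A_{n,j}$ and $A_{n,k}$ near angle $\psi_{n,j}$ can then only be joined by a continuum of that diameter. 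What actually rules this out is the hypothesis $\alpha>0$: since $\omega(0,A_{n,l},X_n)\geq\alpha/n$ for every $l\geq 1$, \lemref{deriv} (whose proof is the technically hardest part, via the $3^p$ induction on $\chi$-short arcs) shows that the depth $\eta_{n,j,k}$ of the shortest arc between two radially close arcs is small, quantitatively $\eta_{n,j,k}\leq\chi_\infty(r_{n,k}-r_{n,j})$ with $\chi_\infty(\delta)\to 0$ as $\delta\to 0$. Combining this with the inequality $\theta_{n,j,k}\leq\eta_{n,j,k}+\max_l\chi_{n,l}\leq\eta_{n,j,k}+\kappa_n$ from \bartoneqref{equation}{eqn:thetaeta}, together with part \eqref{item:deriv:b} of \lemref{deriv} (needed for the ``almost a full circle implies close to the outer boundary'' condition of \lemref{unifcctd}), yields the hypotheses of \lemref{unifcctd}. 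Your sketch never invokes this control of intermediate-arc depth, so as written the local connectivity claim does not follow from the stated hypotheses.
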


\begin{proof} For each domain $X_n$, we will construct a suitable blocked circle domain $\Omega_n\subset X_n$. By construction, $h_{X_n}\to f$ pointwise at points of continuity of~$f$. We want to show that if $\lim_{n\to\infty} \sigma_n/\sqrt n= 0$ and $\alpha>0$, then we may choose $\Omega_n$ such that $\{\Omega_n\}_{n=1}^\infty$ and~$f$ satisfy the conditions of \thmref{snipesbarton}. The only difficult conditions are \eqref{item:snipesbarton:localconn} and \eqref{item:snipesbarton:conv}, that is, the requirement that $\{\C\setminus\Omega_n\}_{n=1}^\infty$ be uniformly locally connected and the requirement that $h_{\Omega_n}\to f$. Since $h_{X_n}\to f$, we may replace \eqref{item:snipesbarton:conv} by a requirement that $h_{X_n}-h_{\Omega_n}\to 0$.

Recall that $r_{n,k}=\mu+k(M-\mu)/n$. The circle domains $X_n$, and thus $\psi_{n,k}$ and $\eta_{n,j,k}$, are then determined by the fact that $h_{X_n}=f_n$. Given the inset angles $\chi_{n,k}$ of the gates, the domains $\Omega_n$ and the parameters $\phi_{n,k}$, $\theta_{n,j,k}$ are determined.

We are free to choose $\chi_{n,k}$ provided $0\leq \chi_{n,k}\leq \min(\psi_{n,k},\psi_{n,k+1})$. We set the inset angles $\chi_{n,k}$ of the gates in $\Omega_{n,k}$ to be
\[\chi_{n,k}=\min(\psi_{n,k},\psi_{n,k+1},\kappa_n).\]
The number $\kappa_n$ thus determines the size and shape of (most of) the ends of the channels. Since $\kappa_n\to 0$, the channel ends must eventually become short, but since $n\kappa_n \to \infty$ and the width of the channels is $(M-\mu)/n$, the channel ends must become thin much faster than they become short.

We remark that if the inset angle $\chi_{n,k}\neq \kappa_n$, then $\chi_{n,k}=\min(\psi_{n,k},\psi_{n,k+1})$. Recall that $\chi_{n,k}+\phi_{n,k}=\min(\psi_{n,k},\psi_{n,k+1})$, where $\phi_{n,k}$ is the gate angle. Therefore, if $\chi_{n,k}\neq \kappa_n$ then $\phi_{n,k}=0$; there is only one gate between $A_{n,k}$ and $A_{n,k+1}$, and it lies along the positive real axis.

Furthermore, if $\phi_{n,k}=0$ then $\chi_{n,k}=\min(\psi_{n,k},\psi_{n,k+1})$. Since $\chi_{n,k}\leq \kappa_n$ this implies that $\psi_{n,k}\leq\kappa_n$ or $\psi_{n,k+1}\leq \kappa_n$; thus, because there are at most $\sigma_n$ arcs $A_k$ with $\psi_{n,k}\leq\kappa_n$, there are at most $2\sigma_n$ gates that lie along the real axis, one on each side of each short arc.

Recall that we imposed two conditions on $\{\kappa_n\}_{n=1}^\infty$. The reasons are as follows. A necessary (not sufficient!) condition for uniform local connectivity of the sequence $\{\Omega_n\}_{n=1}^\infty$ is that the inset angle $\chi_{n,k}$ of the gates be small whenever $r_{n,k+1}-r_{n,k}$ is small. (See \lemref{unifcctd}.) Since $\chi_{n,k}=\min(\psi_{n,k},\psi_{n,k+1},\kappa_n)$, and $\psi_{n,k}$ need not be small, we must have that $\kappa_n\to 0$ as $n\to\infty$.

However, $h_{X_n}-h_{\Omega_n}$ is controlled by the harmonic measure of the gates (see \lemref{radial}), which is small if $\chi_{n,k}$ is large compared with $r_{n,k+1}-r_{n,k}$ (see \lemref{curvechannel}); thus, while $\kappa_n\to 0$, $\kappa_n$ cannot go to zero too quickly.

We make our requirements precise.
In \lemref{radial}, we will establish a bound on $\abs{h_{X_n}-h_{\Omega_n}}$; it is
\[
\abs{h_{X_n}(r)-h_{\Omega_n}(r)}
\leq
\frac{32}{\pi}\sum_{k:\phi_{n,k}>0}
\exp\left(-\frac{\pi\, r_{n,k}\, \chi_{n,k}}{2(r_{n,k+1}-r_{n,k})}\right)
+\frac{2}{\pi}\sum_{k:\phi_{n,k}=0} \sqrt{\frac{r_{n,k+1}-r_{n,k}}{r_{n,k}}}
.\]

Recall that there are at most $2\sigma_n$ gates that lie along the real axis; thus, there  are at most $2\sigma_n$ numbers $k$ that satisfy $\phi_{n,k}=0$. Since $r_{n,k+1}-r_{n,k}=(M-\mu)/n$, and $\mu\leq r_{n,k}$, the second sum is at most $2\sigma_n \sqrt{(M-\mu)/({n\mu})}$.

There are $n$ boundary arcs (and so at most $n$ pairs of gates with $\phi_{n,k}>0$). Recall that if $\phi_{n,k}>0$ then $\chi_{n,k}=\kappa_n$. Again by our choice of $r_{n,k}$, the first sum is at most $n\exp(-(\pi \,\mu \,\kappa_n)/(2(M-\mu)/n))$.

So
\[\abs{h_{X_n}(r)-h_{\Omega_n}(r)}\leq
\frac{32}{\pi}n
\exp\left(-\frac{\pi \mu}{2(M-\mu)}n \kappa_n\right)
+\frac{4}{\pi}\sigma_n \sqrt{\frac{M-\mu}{n\mu}}
.\]

By our assumptions on $\sigma_n$ and $\kappa_n$, both terms go to zero, and so \bartoneqref{condition}{item:snipesbarton:conv} holds: $h_{\Omega_n}\to f$ at points of continuity of~$f$.

We turn to \bartoneqref{condition}{item:snipesbarton:localconn}. Assume that the minimal secant slope $\alpha$ is greater than~$0$. Recall the definitions of $\eta_{n,j,k}$ and $\theta_{n,j,k}$ (the depths of the shortest arc and deepest gate, respectively, between $A_{n,j}$ and $A_{n,k}$; see \figref{moreparam}).

\bartoneqref{Equation}{eqn:thetaeta} states that $\theta_{n,j,k}\leq\eta_{n,j,k}+\max_{j\leq l<k}\chi_{n,l}$. Since $\chi_{n,l}\leq\kappa_n$ for all $n$ and~$l$, this implies that
\[\theta_{n,j,k}\leq \eta_{n,j,k}+\kappa_n.\]

In \lemref{unifcctd}, we will establish sufficient conditions for $\{\C\setminus\Omega_n\}_{n=1}^\infty$ to be uniformly locally connected. These conditions are that, for every $\varepsilon>0$, there exist positive numbers $\delta_1(\varepsilon)$, $\delta_2(\varepsilon)$ such that
\begin{itemize}
\item If $0\leq k\leq n$ and $\pi-\psi_{n,k}<\delta_2(\varepsilon)$, then $M-r_{n,k}<\varepsilon$, and
\item If $0\leq j<k\leq n$ and ${r_{n,k}-r_{n,j}}<\delta_1(\varepsilon)$ then $\theta_{n,j,k}<\varepsilon$.
\end{itemize}
Informally, these conditions say that if two arcs in a blocked circle domain are sufficiently close, then the gates between them are not too deep, and that if an arc is long enough to be almost a full circle, then it is close to the outer boundary circle.

If $0<j\leq n$, then $\omega(0,A_{n,j},X_n)=f(r_{n,j})-f(r_{n,j-1})\geq \alpha/n$. When $\alpha>0$, we can use this lower bound on the harmonic measure of each arc to ensure that the conditions of \lemref{unifcctd} hold.

We do so as follows. In \lemref{deriv}, we will show that if $\alpha>0$, then for each $\varepsilon>0$ there exist numbers $\delta_1'(\varepsilon)$, $\delta_2'(\varepsilon)>0$ (depending on $\alpha$, $M$,~$\mu$) such that
\begin{itemize}
\item If $0\leq k\leq n$ and $\pi-\psi_{n,k}<\delta_2'(\varepsilon)$, then $M-r_{n,k}<\varepsilon$, and
\item If $0\leq j<k\leq n$ and $r_{n,k}-r_{n,j}<\delta_1'(\varepsilon)$, then $\eta_{n,j,k}<\varepsilon$.
\end{itemize}
Fix $\varepsilon>0$. Let $\delta_2(\varepsilon)=\delta_2'(\varepsilon)$. Since $\kappa_n\to 0$ as $n\to\infty$, there is some $N_\kappa(\varepsilon)$ such that if $n>N_\kappa(\varepsilon)$ then $\kappa_n<\varepsilon/2$. Let
\[\delta_1(\varepsilon) = \min\left(\delta_1'(\varepsilon/2),\frac{M-\mu}{N_\kappa(\varepsilon)}\right).\]
Suppose $j<k$ and $r_{n,k}-r_{n,j}<\delta_1(\varepsilon)$. Then because $\delta_1(\varepsilon)\leq \delta_1'(\varepsilon/2)$, the depth $\eta_{n,j,k}$ of the shortest arc between $A_j$ and $A_k$ satisfies $\eta_{n,j,k}<\varepsilon/2$. Furthermore, since $j\neq k$ we have that
\[\frac{M-\mu}{n}\leq r_{n,k}-r_{n,j}<\delta_1(\varepsilon)\leq \frac{M-\mu}{N_\kappa(\varepsilon)},\]
and so $N_\kappa(\varepsilon)<n$; thus $\kappa_n<\varepsilon/2$. Then $\theta_{n,j,k}\leq \eta_{n,j,k}+\kappa_n\leq\varepsilon$.

So if $\alpha>0$ then the conditions of \lemref{unifcctd} hold, as desired.
\end{proof}

\section{An easy-to-check sufficient condition \texorpdfstring{for $f$ to be an $h$-function}{}} \label{sec:functions}

In this section, we exhibit a family of functions that satisfy the conditions of \thmref{main}, and are thus the harmonic measure distribution functions of simply connected bounded domains (\thmref{fjump}). Loosely speaking, these are functions that begin with a jump and reach~1 quickly after that.

Our proof of \thmref{fjump} relies on Lemmas~\ref{lem:fjump} and~\ref{lem:deriv}; we defer their statements and proofs to \secref{circle}.

\begin{theorem}\label{thm:fjump}
Suppose that $f$ is right-continuous, $0$ on $[0,\mu)$, strictly increasing on $[\mu, M]$, and $1$ on $[M,\infty)$, with $0<\mu<M$ and $0<f<1$ on $(\mu,M)$.
Let $\alpha$ be the minimal secant slope of $f$ given by \bartoneqref{equation}{eqn:derivmin},  so $f(\rho_2)-f(\rho_1)\geq \alpha(\rho_2-\rho_1)/(M-\mu)$ for all $\rho_1$,~$\rho_2$ with $\mu\leq \rho_1<\rho_2\leq M$.
Let $\beta=f(\mu)=\lim_{r\to\mu^+} f(r)$.
Notice that $0\leq\beta\leq 1$ and $0\leq\alpha\leq 1-\beta$.

If $\alpha>0$ and $\beta>0$, then there is a number $m_0>0$ depending only on $\alpha$ and $\beta$ such that, if $(M-\mu)/\mu<m_0$, then $f$ satisfies the conditions of \thmref{main}. Hence there exists a simply connected bounded domain~$\Omega$, arising from a sequence of circle domains, such that $f=h_\Omega$.

Furthermore, $m_0\geq \min(m_1,m_2,m_3)$, where
\begin{align}
&m_1 := \frac{1}{e-1},\label{eqn:m1}\\
&m_2 := \frac{\pi^2}{8\log (256/\pi\alpha)},\label{eqn:m2}\\
&
\frac{2}{\pi}m_3\left(2\log\left(1+\frac{1}{m_3}\right)+\pi^2\right)
+\frac{4}{\pi} m_3\log\left(\frac{256}{\pi\alpha}\right)
=\pi\beta
.\label{eqn:m3}
\end{align}
\end{theorem}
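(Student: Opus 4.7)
The plan is to verify the four hypotheses of \thmref{main} with the explicit choice $\kappa_n=((M-\mu)/(\mu n))\log n$. Hypotheses (1), (2), (3) are immediate: $\kappa_n\to 0$ since $\log n/n\to 0$; with this $\kappa_n$ the quantity $n\exp(-\tfrac{\pi\mu}{2(M-\mu)}n\kappa_n)$ simplifies to $n^{1-\pi/2}\to 0$; and $\alpha>0$ is a standing hypothesis. All the real content lies in verifying hypothesis (4): $\sigma_n/\sqrt n\to 0$, where $\sigma_n$ is the number of boundary arcs of $X_n$ of arclength at most $2\kappa_n$.

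My plan is to reduce (4) to the two deferred lemmas \lemref{fjump} and \lemref{deriv}. Because $f(r_{n,k})-f(r_{n,k-1})\geq\alpha/n$, every arc $A_{n,k}$ with $k\geq 1$ carries harmonic measure at least $\alpha/n$ from the origin in $X_n$, and \lemref{deriv} turns this into a quantitative lower bound on $\psi_{n,k}$ in terms of $\alpha$ and the annular channel geometry. Simultaneously, the positive jump $\beta=f(\mu)$ forces $\psi_{n,0}$ close to $\pi$, so $X_n$ resembles a thin annulus with only a narrow opening near $-\mu$; \lemref{fjump} packages this geometric picture into an upper bound on $\sigma_n$ that is independent of $n$ once $(M-\mu)/\mu$ is sufficiently small. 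An $O(1)$ bound on $\sigma_n$ trivially gives $\sigma_n/\sqrt n\to 0$, verifying (4) and allowing \thmref{main} to produce the desired bounded simply connected domain $\Omega$ with $f=h_\Omega$.

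The three explicit constants $m_1$, $m_2$, $m_3$ each record one quantitative step of this reduction. The constant $m_1=1/(e-1)$ is a preliminary smallness requirement, ensuring for example that $\log(M/\mu)<1$ and thus streamlining several logarithmic comparisons between $\log(r_{n,k+1}/r_{n,k})$ and $(r_{n,k+1}-r_{n,k})/\mu$. The constant $m_2=\pi^2/(8\log(256/(\pi\alpha)))$ is the threshold at which the arclength lower bound from \lemref{deriv} becomes strong enough to dominate the Beurling-type exponential decay $\exp(-\pi r/(2\,\mathrm{width}))$ controlling harmonic measure through a thin annular strip. Finally, $m_3$, defined implicitly by the balance \bartoneqref{equation}{eqn:m3}, is the threshold at which, in \lemref{fjump}, the total harmonic measure that short arcs could absorb remains below $\pi\beta$, the jump the innermost arc is forced to realize. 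The main obstacle is the explicit tracking of constants inside the proof of \lemref{fjump}; once that lemma is available, taking $(M-\mu)/\mu<\min(m_1,m_2,m_3)$ makes all three quantitative estimates hold simultaneously and the theorem follows.
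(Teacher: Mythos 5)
Your proposal follows essentially the same route as the paper: conditions (1)--(3) of \thmref{main} are immediate with $\kappa_n=((M-\mu)/(\mu n))\log n$, and condition (4) is verified by combining \lemref{fjump} and \lemref{deriv}, with $m_1$, $m_2$, $m_3$ recording the resulting smallness requirements on $(M-\mu)/\mu$. The only gloss worth correcting is the division of labour between the two lemmas: in the paper, \lemref{fjump} bounds $\psi_{n,0}$ from below in terms of $\beta$ (not ``close to $\pi$,'' and it does not by itself bound $\sigma_n$), \lemref{deriv} bounds $\psi_{n,0}-\psi_{n,k}\leq\eta_{n,0,n}$ using the $\alpha/n$ harmonic-measure lower bound on each arc, and together these give a uniform positive lower bound $2\psi$ on every arclength, whence $\sigma_n=0$ for all large $n$ because $\kappa_n\to 0$.
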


We remark that for each fixed $\alpha$, $\beta\in (0,1)$, the function
\[g(m)=
\frac{2}{\pi}m\left(2\log\left(1+\frac{1}{m}\right)+\pi^2\right)
+\frac{4}{\pi} m\log\left(\frac{256}{\pi\alpha}\right)
\]
is continuous and strictly increasing on $(0,\infty)$, and satisfies $\lim_{m\to 0^+} g(m)=0$, $\pi\beta < \pi < g(1)$; thus, there is a unique (small) positive number $m_3$ that solves \bartoneqref{equation}{eqn:m3}.

\begin{proof} Choose some such function~$f$.
Let $X_n$, $A_{n,k}$, $\psi_{n,k}$ be as in \secref{sufficient}.

By our construction of $X_n$, the harmonic measure of the innermost arc $A_{n,0}$ of $\partial X_n$ is given by \[\omega(0,A_{n,0},X_n)=f(r_{n,0})=f(\mu)=\beta\]
for all~$n$. So by \lemref{fjump}, if $\mu\geq M(1-1/e)$, then
\[\psi_{n,0}>\min\left(\frac{\pi}{2},
\pi\beta -\frac{2}{\pi} \frac{M-\mu}{\mu}
\left(2\log\left(\frac{M}{M-\mu}\right)+\pi^2
\right)\right).
\]

For fixed $n$ and $k$, $A_{n,k}$ is a boundary arc of $X_n$ that lies between $A_{n,0}$ and $A_{n,n}=\partial B(0,M)$. Thus, by \lemref{deriv}, we have that
\[\psi_{n,0}-\psi_{n,k}
\leq \eta_{n,0,n} \leq \frac{4}{\pi} \frac{M-\mu}{\mu}
\log\left(\frac{256}{\pi\alpha}\right).
\]

Thus, if $\mu>M(1-1/e)$ and $0\leq k\leq n$, then
\[
\psi_{n,k}\geq
\min\left(\frac{\pi}{2},
\pi\beta -\frac{2}{\pi}\frac{M-\mu}{\mu}
\left(2\log\left(\frac{M}{M-\mu}\right)+\pi^2
\right)\right)
-
\frac{4}{\pi} \frac{M-\mu}{\mu}
\log\left(\frac{256}{\pi\alpha}\right)
.\]

If $(M-\mu)/\mu<m_1$, then $\mu>M(1-1/e)$. If $(M-\mu)/\mu < m_2$, then
\[
\frac{\pi}{2}-
\frac{4}{\pi}
\frac{M-\mu}{\mu}
\log\left(\frac{256}{\pi\alpha}
\right)
>0.\]

Suppose that $(M-\mu)/\mu<m_3$; by monotonicity of $g(m)$ we have that
\[
\frac{2}{\pi}\frac{M-\mu}{\mu}\left(2\log\left(\frac{M}{M-\mu}\right)+\pi^2\right)
+\frac{4}{\pi} \frac{M-\mu}{\mu}\log\left(\frac{256}{\pi\alpha}\right)
<\pi\beta.
\]
Thus, if $(M-\mu)/\mu<\min(m_1,m_2,m_3)$, then there is some positive constant $\psi$ such that $\psi_{n,k}\geq\psi$ for all $n$ and~$k$; that is, the arclength of every boundary arc is at least~$2\psi$.

Let $\{\kappa_n\}_{n=1}^\infty$ be any sequence that satisfies Conditions~\eqref{item:kappasmall} and~\eqref{item:kappabig} of \thmref{main}. By assumption on $f$, Condition~\eqref{item:derivpos} holds. Since $\lim_{n\to\infty}\kappa_n=0$, if $n$ is large enough then \emph{no} boundary arcs of $X_n$ have arclength less than $2\kappa_n$, and so Condition~\eqref{item:fewshortarcs} holds.

Thus the conditions of \thmref{main} hold, and so a bounded simply connected domain $\Omega$ exists such that $f=h_\Omega$.
\end{proof}

\begin{remark}\label{rmk:fjumpexample} If $\alpha=\beta=\frac{1}{2}$, and the numbers $m_l$ are defined as above, then $m_1\approx 0.58198$, $m_2\approx 0.24220$, and $m_3\approx 0.09922$. Recall the function
\[f(r)=\begin{cases} 0, & 0< r\leq 1\\
\displaystyle
\frac{1}{2}+\frac{1}{2}\frac{r-1}{0.0992}, & 1\leq r \leq 1.0992\\
1, & 1.0992\leq r
\end{cases}\]
of \bartoneqref{equation}{eqn:jumpfunction}. We remark that for this function, $\alpha=\beta=\frac{1}{2}$. Also, $\mu=1$ and $M=1.0992$, so $(M-\mu)/\mu=M-1=0.0992<\min(m_1,m_2,m_3)$. Thus, we see that $f$ satisfies the conditions of \thmref{fjump}, and so $f=h_\Omega$ for some simply connected domain~$\Omega$.
\end{remark}

\section{Uniqueness of the domain \texorpdfstring{$\Omega$}{}}\label{sec:unique}

Let $f$ be a function. Under certain conditions (Theorems~\ref{thm:main} and~\ref{thm:fjump}), there exists a simply connected domain~$\Omega$ such that $f=h_\Omega$. We are interested in whether this domain $\Omega$ is unique; that is, if $\Omega$ and $\widetilde\Omega$ are domains and $h_\Omega=h_{\widetilde\Omega}$, what else must be true of $\Omega$ and $\widetilde\Omega$ to allow us to conclude that $\Omega=\widetilde\Omega$?

It is clear that some conditions must be imposed. If a domain~$\Omega$ is rotated around the point~0, or reflected across a line through~0, then the domain is changed but the harmonic measure distribution function remains the same.

If a single point is deleted from a domain $\Omega$, then the harmonic measure of all boundary sets in $\partial\Omega$ is unchanged, and so the harmonic measure distribution function is unchanged. More generally, if $E\subset\Omega$ has harmonic capacity zero and $0\notin E$, then $\Omega$ and $\Omega\setminus E$ have the same harmonic measure distribution function. In the following discussion, we will disregard sets of harmonic capacity zero.

Much more interesting examples of non-uniqueness exist.
Consider domains whose harmonic measure distribution functions are step functions. (Such functions were studied extensively in \cite{SniW05}. The underlying domains of course are not simply connected.) If $h_{X}$ is a step function with discontinuities at $r_0$, $r_1,\dots,r_n$, then $\partial X$ is a subset of $\cup_{k=0}^n \partial B(0,r_k)$. The proof of \cite[Theorem~2]{SniW05} implies that there are uncountably many such domains $X$ with $h_X=f$. For example, $\partial X\cap \partial B(0,r_n)$ may be taken to be an arc with  arclength of any preassigned number $\psi$, $0<\psi\leq 2\pi$; the sets $\partial X \cap \partial B(0,r_k)$ may be taken to be connected arcs centered at any preassigned angles, or indeed to be disconnected sets.

Thus, if $h_X$ is a step function, then $h_X=h_{\widetilde X}$ for many domains~${\widetilde X}\neq X$.

However, if $X$ and ${\widetilde X}$ are circle domains in the sense of \dfnref{circle}, then $h_X=h_{\widetilde X}$ implies $X={\widetilde X}$. (See \lemref{circleunique}.)
Furthermore, if $h_X$ is a step function, then $X$ is a circle domain if and only if $X$ is bounded and symmetric in the sense of \dfnref{symmetric}.

We conjecture that these conditions suffice to imply uniqueness.
\begin{conjecture}
\label{conj:unique}
Suppose that $\Omega$ and $\widetilde\Omega$ are two domains, both of which are bounded and symmetric in the sense of \dfnref{symmetric}. Suppose further that $h_\Omega=h_{\widetilde\Omega}$. Then $\Omega=\widetilde\Omega$ up to a set of harmonic capacity zero.
\end{conjecture}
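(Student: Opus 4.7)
My plan is to first show that any bounded symmetric domain $\Omega$ is determined, up to a set of harmonic capacity zero, by its \emph{arc-function}
\[\psi_\Omega(r) := \tfrac{1}{2}\,\bigl|\partial B(0,r)\setminus\Omega\bigr|,\]
where $\abs{\cdot}$ denotes arclength. By \dfnref{symmetric}, the set $\partial B(0,r)\setminus\Omega$ is either empty, all of $\partial B(0,r)$, or a connected closed arc centered on the positive real axis. Thus $\psi_\Omega$ specifies $\Omega^c$ everywhere except possibly on the positive real axis, which has harmonic capacity zero. So it suffices to prove that $h_\Omega=h_{\widetilde\Omega}$ forces $\psi_\Omega=\psi_{\widetilde\Omega}$.

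I would next dispose of the case in which $h_\Omega$ has only finitely many discontinuities. Then $\partial\Omega$ is supported, up to capacity zero, on finitely many circles $\partial B(0,r_k)$, and the symmetry assumption forces each $\partial\Omega\cap\partial B(0,r_k)$ to be a single arc centered on the positive real axis; that is, $\Omega$ is a circle domain in the sense of \dfnref{circle}. The uniqueness result for circle domains (\lemref{circleunique}) then gives $\Omega=\widetilde\Omega$.

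For general $h_\Omega$ the approach is to approximate $\Omega$ from inside and outside by symmetric domains $\Omega_n^-\subseteq\Omega\subseteq\Omega_n^+$ whose arc-functions are piecewise constant on the intervals of a mesh $\mu=r_{n,0}<\dots<r_{n,n}=M$, taking values $\inf$ and $\sup$ of $\psi_\Omega$ on each interval. A Carath\'eodory kernel convergence argument then gives $\Omega_n^\pm\to\Omega$, and combined with the convergence result of Betsakos and Solynin mentioned after \thmref{snipes:conv}, this yields $h_{\Omega_n^\pm}\to h_\Omega$. A suitable extension of \lemref{circleunique} to symmetric domains whose boundary sits in finitely many concentric annuli would then identify each $\Omega_n^\pm$ from its own $h$-function, and passing to the limit would recover $\Omega$.

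The main obstacle is that the sequences $\Omega_n^\pm$ are defined in terms of $\psi_\Omega$, and hence of $\Omega$, rather than of $h_\Omega$ alone. To close the argument one must produce a canonical procedure that extracts the approximants $\Omega_n^\pm$ (or, equivalently, $\psi_\Omega$ itself) from $h_\Omega$; this amounts to a monotonicity principle guaranteeing that enlarging the arc $\partial B(0,r)\setminus\Omega$ at even a single radius must strictly perturb the harmonic measure functional $h_\Omega$. For simply connected $\Omega$ this can plausibly be accomplished via the Riemann map together with Beurling-type estimates, and I expect this underlies the partial uniqueness result already established elsewhere in this section. In the multiply connected setting, however, the Riemann map is unavailable, and the required monotonicity would have to be proved by a delicate capacitary or symmetrization comparison; this is, in my view, the essential difficulty that leaves the conjecture open.
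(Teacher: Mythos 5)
First, note that the statement you were asked to prove is stated in the paper as \conjref{unique}, and the paper does \emph{not} prove it: it establishes only the two special cases where the domains are circle domains (\lemref{circleunique}) and where they are simply connected (\thmref{unique}). So the honest conclusion of your write-up --- that the general multiply connected case remains open --- agrees with the paper, and the obstacle you isolate (one must recover the arc-function $\psi_\Omega$ from $h_\Omega$ alone, which requires a strict perturbation principle for harmonic measure under enlargement of a single boundary arc) is precisely why the statement is left as a conjecture. Your guess about the simply connected case is also accurate in spirit: \thmref{unique} recovers the domain from $h_\Omega$ via the normalized Riemann map, by showing that the radial limit of $\abs{\Phi(re^{i\theta})}$ equals $h_\Omega^{-1}(\abs{\theta}/\pi)$ for a.e.\ $\theta$, so that $\Phi$ and $\widetilde\Phi$ have equal moduli almost everywhere on $\partial\D$ and hence coincide.

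That said, two steps of your outline are shakier than you indicate. First, you treat ``$h_\Omega$ has finitely many discontinuities'' as equivalent to ``$h_\Omega$ is a step function''; these are different ($h_\Omega$ can be continuous and strictly increasing on an interval, as for a disk containing but not centered at $0$), and only in the step-function case is $\partial\Omega$ carried by finitely many circles. Second, even for a step function, the inference from ``the part of $\partial\Omega$ off the circles $\partial B(0,r_k)$ has harmonic measure zero'' to ``it has harmonic capacity zero'' is not automatic (capacity zero implies harmonic measure zero, but not conversely), so identifying such a domain with a circle domain up to capacity zero requires an argument that neither you nor the paper supplies in detail. Neither of these affects your main point, which is that the approximants $\Omega_n^\pm$ are built from $\Omega$ rather than from $h_\Omega$, so the scheme is circular as it stands; that is the genuine gap, and it is the same gap that leaves the conjecture open in the paper.
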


We will prove \conjref{unique} only in the two special cases where $\Omega$ is a circle domain (\lemref{circleunique}) or where $\Omega$ is simply connected (\thmref{unique}). We will also show (\lemref{mainsymmetric}) that the simply connected domains $\Omega$ produced by \thmref{main} are symmetric; since they are clearly bounded, \thmref{unique} will apply. Thus, if a function satisfies the conditions of \thmref{main}, then it arises as the $h$-function of a unique bounded, simply connected symmetric domain.
In \thmref{mainunique}, we will state this conclusion more precisely.

Throughout this section, by ``symmetric'' we mean ``symmetric in the sense of \dfnref{symmetric}.''

\begin{lemma}\label{lem:circleunique} Let $X$ and $\widetilde X$ be two circle domains. Suppose that $h_X=h_{\widetilde X}$. Then $X=\widetilde X$ except possibly for finitely many points on the positive real axis.\end{lemma}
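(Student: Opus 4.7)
The plan is to reduce the problem to the injectivity of a smooth map from arclengths to harmonic measures, and then to establish this injectivity via a Jacobian computation combined with an integral mean-value trick.

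Because $h_X=h_{\widetilde X}$ are right-continuous step functions agreeing as functions, they have the same set of discontinuities and the same jump sizes. By \thmref{snipes:circle}, each discontinuity of $h_X$ occurs at the radius of a boundary arc of $X$ carrying positive harmonic measure from~$0$, and the jump equals that harmonic measure. Thus the \emph{non-degenerate} boundary arcs (those of positive arclength) of $X$ and $\widetilde X$ lie on circles of the same radii $r_0<\cdots<r_m$, and corresponding arcs satisfy $\omega(0,A_j,X)=\omega(0,\widetilde A_j,\widetilde X)$ for every~$j$. Any remaining boundary arc must be a single point on the positive real axis, since the circle-domain definition requires every arc to contain a positive real and the only such arcs carrying zero harmonic measure are isolated points. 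These degenerate arcs account for the ``finitely many points on the positive real axis'' in the conclusion; it remains to show that the non-degenerate arcs of $X$ and $\widetilde X$ coincide.

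Each non-degenerate $A_j$ for $j<m$ is a closed connected sub-arc of $\partial B(0,r_j)$, symmetric about the real axis and centered on the positive real ray, so it is determined by its half-arclength $\psi_j\in(0,\pi)$; and $\psi_m=\pi$ automatically. Let $X(\psi)$ denote the circle domain with the prescribed radii $r_j$ and half-arclengths $\psi=(\psi_0,\ldots,\psi_{m-1})\in(0,\pi)^m$, and define the map $\Psi:(0,\pi)^m\to\R^m$ by $\Psi(\psi)=(\omega_0,\ldots,\omega_{m-1})$, where $\omega_j=\omega(0,A_j,X(\psi))$. The task reduces to showing $\Psi$ is injective. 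The main step is a Jacobian computation: enlarging $A_k$ strictly increases $\omega_k$ and strictly decreases every other $\omega_j$, including $\omega_m=\omega(0,A_m,X(\psi))$, either by a Hadamard variational formula or by a coupling argument (a larger obstacle strictly blocks some Brownian paths that would otherwise reach the other components). Hence $\partial\omega_k/\partial\psi_k>0$ and $\partial\omega_j/\partial\psi_k<0$ for $j\ne k$. Combining with $\sum_{j=0}^{m}\omega_j=1$ gives, column by column,
\[\frac{\partial\omega_k}{\partial\psi_k}-\sum_{\substack{0\le j<m\\ j\ne k}}\biggl|\frac{\partial\omega_j}{\partial\psi_k}\biggr|=-\frac{\partial\omega_m}{\partial\psi_k}>0,\]
so $D\Psi$ is strictly column-diagonally dominant, hence nonsingular, at every point of $(0,\pi)^m$.

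Global injectivity follows by integration. If $\Psi(\psi)=\Psi(\widetilde\psi)$, then since the segment from $\widetilde\psi$ to $\psi$ lies in the convex set $(0,\pi)^m$,
\[0=\Psi(\psi)-\Psi(\widetilde\psi)=\bar M\cdot(\psi-\widetilde\psi),\qquad \bar M:=\int_0^1 D\Psi\bigl((1-t)\widetilde\psi+t\psi\bigr)\,dt.\]
The strict column-diagonal dominance of $D\Psi$ is preserved under the termwise integration defining $\bar M$ (using $\int\abs{f}\ge\abs{\int f}$ on each off-diagonal entry together with the pointwise strict inequality on the diagonal), so $\bar M$ is nonsingular, forcing $\psi=\widetilde\psi$. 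The main obstacle I expect is making the Jacobian computation fully rigorous, and in particular establishing the strict sign of $\partial\omega_m/\partial\psi_k$; this is where Hopf's boundary-point lemma enters, applied to the positive harmonic function $\omega(\,\cdot\,,A_m,X)$ to force a strictly negative inward normal derivative on the newly-exposed portion of each arc $A_k$ as $\psi_k$ increases.
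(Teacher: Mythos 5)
Your reduction to the injectivity of the arclength-to-harmonic-measure map $\Psi$ is sound, and the linear algebra is correct as far as it goes: the sign pattern of $D\Psi$ together with $\sum_{j=0}^m\omega_j=1$ does give strict column diagonal dominance, this survives the termwise averaging along the segment, and a strictly diagonally dominant matrix is nonsingular. But the proof has a genuine gap at exactly the point you flag and then wave at: you never establish that $\Psi$ is differentiable, let alone $C^1$, and this is not a routine fact here. The perturbation is the tangential extension of a boundary \emph{slit} (a circular arc of zero area) along its own circle. The harmonic measure density on such an arc has an inverse-square-root singularity at the arc's tips, so the harmonic measure of the newly added piece of angular length $\delta$ is of order $\sqrt{\delta}$, not $\delta$; the finite derivative you need emerges only after a cancellation (the $\sqrt{\delta}$ of measure acquired by $A_k$ is taken almost entirely from $A_k$ itself, and only a further factor of order $\sqrt{\delta}$ leaks away from the other arcs, so the net transfer is $\asymp\delta$). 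Neither of the tools you cite closes this: Hopf's boundary point lemma does not apply at the arc tips, which are not smooth boundary points and where the normal derivative of $\omega(\,\cdot\,,A_m,X)$ is infinite, and the textbook Hadamard variational formula concerns normal perturbations of smooth boundaries, not tip extensions of slits. Proving existence and continuity of $\partial\omega_j/\partial\psi_k$ amounts to extracting the leading coefficient in a difference quotient of the form $\delta^{-1}\int u_j\,d\omega_{X_\delta}$ in which both factors have square-root behaviour near the tip; that is real work, and without it the argument is incomplete.

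The irony is that the mechanism you are differentiating --- enlarging one arc strictly steals harmonic measure from every other boundary component --- already proves the lemma at the level of finite differences, with no calculus at all, and that is what the paper does. Assuming $\psi_k>\widetilde\psi_k$ for some $k$, let $E$ be the union of all arcs of $X$ strictly longer than their counterparts and $\widetilde E$ the union of those counterparts; then
\[\omega(0,\widetilde E,\widetilde X)=\omega(0,E,X)\geq\omega(0,E,X\cap\widetilde X)>\omega(0,\widetilde E,\widetilde X),\]
where the equality is $h_X=h_{\widetilde X}$, the middle step is monotonicity in the domain, and the final strict step uses only that $\widetilde E$ is a proper subset of both $E$ and $\partial\widetilde X$ (a Brownian path can hit $E\setminus\widetilde E$ and then escape to $\partial\widetilde X\setminus\widetilde E$ with positive probability). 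If you want to keep your scheme, replace the Jacobian by this one global finite-difference comparison; otherwise you must genuinely prove the $C^1$ dependence of harmonic measure on the arc endpoints.
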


\begin{proof} In the proof of \thmref{main}, it was convenient to allow finitely many boundary arcs of arclength~0, that is, boundary ``arcs'' consisting of single points on the positive real axis. Such points have harmonic measure zero and cannot be detected from the harmonic measure distribution function. For the remainder of this proof, we ignore such points; that is, we assume that the boundary arcs of $X$ and $\widetilde X$ have positive arclength.

Since $X$ is a circle domain, $h_X$ must be a step function. Let its discontinuities be at $r_0$, $r_1,\dots,r_n$. The boundary arcs of $X$ are circular arcs with midpoints lying on the positive real axis. Let $A_k$ and $\widetilde A_k$ be the boundary arcs of $X$ and $\widetilde X$, respectively, and let $\psi_k$, $\widetilde \psi_k$ be half their arclengths.

We need only show $\psi_k=\widetilde\psi_k$ for $0\leq k\leq n$. Since $X$ and $\widetilde X$ are bounded, the outermost boundary component is a full circle, and so $\psi_n=\widetilde\psi_n = \pi$. Suppose that $\psi_k>\widetilde \psi_k$ for at least one~$k$. Let $E$ be the union of all arcs $A_k$ such that $\psi_k>\widetilde\psi_k$, and let $\widetilde E$ be the union of the corresponding arcs of~$\widetilde X$. Then $\widetilde E$ is nonempty and $\widetilde E\subsetneq E$, and since $A_n\not\subset E$, we have that $\widetilde E\subsetneq \partial\widetilde X$.

Then
$\omega(0,\widetilde E, \widetilde X)=\omega(0,E,X) $ because $h_X=h_{\widetilde X}$. But $\omega(0,E,X)\geq \omega(0,E, X\cap\widetilde X) $ by the property of monotonicity in the domain of harmonic measure.
Since $E\cap\partial\widetilde X$ is a \emph{proper} subset of both $E$ and~$\partial\widetilde X$, we have that $\omega(0,E, \widetilde X\setminus E)>\omega(0, E\cap\partial\widetilde X, \widetilde X)$.

But $E\cap\partial\widetilde X=\widetilde E$ and $X\cap\widetilde X = \widetilde X\setminus E$.
Thus,
\[\omega(0,\widetilde E, \widetilde X)=\omega(0,E,X) \geq \omega(0,E, X\cap\widetilde X) >\omega(0,\widetilde E, \widetilde X).\]
This is a contradiction; thus $\psi_k\leq \widetilde \psi_k$ for $0\leq k\leq n$. Similarly, $\widetilde \psi_k\leq \psi_k$ for $0\leq k\leq n$ and so $X=\widetilde X$ except possibly for finitely many points on the real axis.
\end{proof}

Now, we consider simply connected domains. We begin with some general remarks.
\begin{remark}\label{rmk:sccdsymm} Let $\Omega$ be a bounded simply connected symmetric domain. Let $\mu$ and $M$ be the largest and smallest numbers, respectively, such that $B(0,\mu)\subset\Omega\subset B(0,M)$. By the remarks after \dfnref{symmetric}, $\Omega\cap\R=(-M,\mu)$.

Let $\Phi:\D\mapsto\Omega$ be the Riemann map;  assume that $\Phi$ is normalized such that $\Phi(0)=0$ and $\Phi'(0)>0$. Since normalized Riemann maps are unique, and since $\Omega$ is symmetric about the real axis, $\Phi(z)=\overline{\Phi(\overline z)}$. Thus, $\Phi(z)$ is real if and only if $z$ is real. Since $\Phi(0)=0$, $\Phi'(0)>0$, and $\Phi$ is continuous and one-to-one, we have that $\Phi((-1,0))=(-M,0)$ and $\Phi((0,1))=(0,\mu)$.
\end{remark}

\begin{lemma} \label{lem:mainsymmetric} Let $\{\Omega_n\}_{n=1}^\infty$ be a sequence of bounded, simply connected domains containing 0 that are symmetric in the sense of \dfnref{symmetric}. Let $\Phi_n:\D\mapsto\Omega_n$ be the Riemann maps of the domains $\Omega_n$, normalized so that $\Phi_n(0)=0$ and $\Phi_n'(0)>0$. Suppose that $\Phi_n\to\Phi$ uniformly on~$\D$, where $\Phi$ is the Riemann map of some simply connected domain~$\Omega$.

Then $\Omega$ is symmetric. In particular, if $f$ satisfies the conditions of \thmref{main} then $f=h_\Omega$ for some bounded, simply connected, symmetric domain~$\Omega$.
\end{lemma}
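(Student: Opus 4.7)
The plan is to verify the three requirements of \dfnref{symmetric} for $\Omega$.  First, $0\in\Omega$ is immediate since $\Phi(0)=\lim_n\Phi_n(0)=0$.  Second, to show $\Omega$ is symmetric about the real axis, observe that for each~$n$ the map $z\mapsto\overline{\Phi_n(\bar z)}$ is also a normalized Riemann map onto $\Omega_n$ (because $\Omega_n$ equals its own complex conjugate), so by uniqueness of normalized Riemann maps $\Phi_n(\bar z)=\overline{\Phi_n(z)}$ on $\D$.  Passing to the uniform limit yields $\Phi(\bar z)=\overline{\Phi(z)}$, hence $\Omega=\Phi(\D)$ is symmetric about the real axis.

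The main work is the third requirement: for every $r>0$, $\partial B(0,r)\setminus\Omega$ is empty or a closed connected set containing a point on the positive real axis.  Fix $r>0$.  By symmetry of $\Omega_n$, the set $\partial B(0,r)\setminus\Omega_n$ is either empty, all of $\partial B(0,r)$, or a closed arc $\{re^{i\phi}:|\phi|\le\psi_n(r)\}$ of half-angle $\psi_n(r)\in[0,\pi]$ centered at $r$.  Passing to a subsequence, I may assume $\psi_n(r)\to\psi(r)\in[0,\pi]$ (with a separate argument handling the case when the blocked set is empty for infinitely many~$n$).  The uniform convergence $\Phi_n\to\Phi$ on $\D$ yields Carathéodory kernel convergence $\Omega_n\to\Omega$, from which I deduce: (a)~for $|\phi|<\psi(r)$, if $re^{i\phi}\in\Omega$ then some neighborhood of it would lie in $\Omega_n$ for $n$ large, contradicting $re^{i\phi}\in\Omega_n^c$; hence $re^{i\phi}\in\Omega^c$, and by closedness the entire closed arc of half-angle $\psi(r)$ lies in $\Omega^c$, giving a connected subset of $\partial B(0,r)\setminus\Omega$ containing~$r$.  (b)~For $|\phi|>\psi(r)$, $re^{i\phi}\in\Omega_n$ for $n$ large, and the lifting argument from the proof of \thmref{sccdconv} applied to $z_n=\Phi_n^{-1}(re^{i\phi})\in\D$ yields $re^{i\phi}\in\overline\Omega$.

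The hard part will be ruling out extra points $re^{i\phi_1}\in\partial\Omega$ with $\phi_1>\psi(r)$, since such points lie in $\Omega^c$ and might disconnect $\partial B(0,r)\setminus\Omega$.  Using the continuous extension provided by \thmref{sccdconv}, any such point has the form $re^{i\phi_1}=\Phi(w_1)$ for some $w_1\in\partial\D$, so $\Phi_n(w_1)\in\partial\Omega_n$ lies on the blocked arc at radius $|\Phi_n(w_1)|$; writing $\Phi_n(w_1)=s_n e^{i\alpha_n}$ with $s_n\to r$ and $\alpha_n\to\phi_1$ then forces $\psi_n(s_n)\ge|\alpha_n|$.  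Combined with the equicontinuity of $\{\Phi_n\}$ and the monotonicity of the boundary traversal $|\Phi_n(e^{it})|$ on $[0,\pi]$ implied by symmetry of~$\Omega_n$ (which controls how $\psi_n$ can differ between $r$ and $s_n$), this yields $\psi(r)\ge\phi_1$, the required contradiction.  Finally, the ``in particular'' clause follows by applying \thmref{main} to produce a sequence $\{\Omega_n\}$ of blocked circle domains (symmetric in the sense of \dfnref{symmetric} with $h_{\Omega_n}\to f$), using \thmref{sccdconv} to extract a subsequential uniform limit on $\overline\D$ of the normalized Riemann maps, invoking \thmref{snipes:conv} to conclude $h_\Omega=f$, and applying the first part of the lemma just proven to obtain that $\Omega$ is symmetric.
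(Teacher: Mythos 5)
Your handling of the first two requirements of \dfnref{symmetric} (that $0\in\Omega$ and that $\Phi_n(\bar z)=\overline{\Phi_n(z)}$ passes to the limit) matches the paper. The third requirement is where your argument has a genuine gap, and it is not a fixable technicality: the premise of your strategy is false. You let $\psi_n(r)$ be the half-angle of the blocked arc $\partial B(0,r)\setminus\Omega_n$ and aim to show that $\partial B(0,r)\setminus\Omega$ is exactly the arc of half-angle $\psi(r)=\lim_n\psi_n(r)$, the ``hard part'' being the claim that $re^{i\phi_1}\in\partial\Omega$ forces $\psi(r)\geq\phi_1$. That claim is false. Take $\Omega_n=B(0,2)\setminus\bigl([1,2]\cup\{(1.5+\tfrac1n)e^{i\phi}:\abs{\phi}\leq\pi/2\}\bigr)$: each $\Omega_n$ is bounded, simply connected, contains $0$ and is symmetric, the complements are uniformly locally connected, and the normalized Riemann maps converge uniformly to that of $\Omega=B(0,2)\setminus\bigl([1,2]\cup\{1.5e^{i\phi}:\abs{\phi}\leq\pi/2\}\bigr)$, so the hypotheses of the lemma hold. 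At $r=1.5$ one has $\psi_n(r)=0$ for every $n$ (the blocked set of $\Omega_n$ at radius $1.5$ is the single point $1.5$), so $\psi(r)=0$; yet $\partial B(0,r)\setminus\Omega$ is the arc of half-angle $\pi/2$, and $1.5i\in\partial\Omega$. The blocked set of the limit domain can be strictly larger than the limit of the blocked sets, because the half-angle of $\partial B(0,s)\setminus\Omega_n$ is not even approximately continuous in $s$. Consequently your final step cannot work: the bound $\psi_n(s_n)\geq\abs{\alpha_n}$ at the radii $s_n=\abs{\Phi_n(w_1)}$ cannot be transferred to the fixed radius $r$, and no appeal to equicontinuity or to monotonicity of $t\mapsto\abs{\Phi_n(e^{it})}$ will produce the (false) inequality $\psi(r)\geq\phi_1$.

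A secondary issue is that your argument uses the continuous extension of $\Phi$ to $\overline\D$ and the equicontinuity of $\{\Phi_n\}$; these hold in the application via \thmref{sccdconv} but are not hypotheses of the lemma, whose statement assumes only uniform convergence on $\D$. The paper's proof avoids both problems by never examining the blocked arcs at all: it first shows that the open sets $\Omega\cap B(0,r)$ and $\Omega\setminus\overline{B(0,r)}$ are path-connected, by transporting explicit paths from $\Omega_n\cap B(0,r-\varepsilon)$ (where path-connectedness follows directly from symmetry of $\Omega_n$) via the map $\Phi\circ\Phi_n^{-1}$, which is uniformly within $\varepsilon$ of the identity; it then deduces path-connectedness of $\Omega\cap\partial B(0,r)$ from simple connectivity of $\Omega$, by joining two points of $\Omega\cap\partial B(0,r)$ with an inner path and an outer path whose union is a Jordan curve whose interior, containing an arc of $\partial B(0,r)$, must lie in $\Omega$. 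In other words, the correct object to control in the limit is the open set $\Omega\cap\partial B(0,r)$, not its closed complement; any repair of your approach would have to be reorganized along those lines.
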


\begin{proof}
Recall that a bounded domain $\Omega$ is symmetric in the sense of \dfnref{symmetric} if and only if $\Omega\cap \partial B(0,r)$ is path-connected for all $r>0$, $\Omega$ is symmetric about the real axis, and $(-M,0)\subset \Omega\subset B(0,M)$ for some $M>0$.

By \rmkref{sccdsymm}, $\Phi_n(\overline z)=\overline{\Phi_n(z)}$ for all $z\in\D$. Since $\Phi(z)=\lim_{n\to\infty} \Phi_n(z)$, we have that $\Phi(z)=\overline{\Phi(\overline z)}$, implying that $\Omega$ is also symmetric about the real axis.

Let $M_n$, $M$ be the smallest numbers such that $\Omega_n\subset B(0,M_n)$, $\Omega\subset B(0,M)$. Since $\Phi_n\to\Phi$ uniformly, $M_n\to M$. By \rmkref{sccdsymm}, $\Phi_n((-1,0))=(-M_n,0)$, and so $\Phi((-1,0))=(-M,0)$. Thus $(-M,0)\subset\Omega\subset B(0,M)$.

We need only show that the set $\Omega\cap \partial B(0,r)$ is path-connected for all $0<r<M$. We remark that it is easier to show path-connectedness of open sets than of arbitrary sets. Thus, we begin by showing that if the two open sets $\Omega\cap B(0,r)$ and $\Omega\setminus\overline{B(0,r)}$ are path-connected, then so is $\Omega\cap \partial B(0,r)$.

Let $z$, $w\in \Omega\cap \partial B(0,r)$. Then $\abs{z}=\abs{w}=r$. We want to show that some path-connected arc of $\partial B(0,r)$, with endpoints $z$ and~$w$, is contained in~$\Omega$.

Because $\Omega$ is open, there exists some $\varepsilon>0$ such that $B(z,\varepsilon)\subset\Omega$ and $B(w,\varepsilon)\subset\Omega$. Take $z_-\in B(z,\varepsilon)\cap B(0,r)$, and take $w_-\in B(w,\varepsilon) \cap B(0,r)$. Suppose that $\Omega\cap B(0,r)$ is path-connected. Then there exists a path $\gamma_-$ that connects $z$ to $z_-$ to $w_-$ to $w$, and that (except for the endpoints) lies entirely in $\Omega\cap B(0,r)$.

Similarly, if $\Omega\setminus\overline{B(0,r)}$ is path-connected, then there is a path $\gamma_+$ that connects $w$ to $z$ and that lies in $\Omega\setminus\overline{B(0,r)}$. Then $\gamma_+\cup\gamma_-$ is a simple closed curve lying in~$\Omega$. Because $\Omega$ is simply connected, the interior of $\gamma_+\cup\gamma_-$ must lie in $\Omega$, and so an arc of $\partial B(0,r)$ connecting $z$ and $w$ must lie in~$\Omega$.

Thus, if the two open sets $\Omega\cap B(0,r)$ and $\Omega\setminus \overline {B(0,r)}$ are both path-connected, then so is the set $\Omega\cap \partial B(0,r)$.

We wish to show that $\Omega\cap B(0,r)$ is path-connected. We first show that $\Omega_n\cap B(0,\rho)$ is path-connected for all $n\geq 1$ and all $\rho>0$.
Choose some $n\geq 1$ and fix some $\rho>0$. Since $\Omega_n$ is symmetric, if $z$, $w\in \Omega_n$ with $\abs{z}>\abs{w}$, then $z$ and $w$ may be connected by a path lying along the two arcs $\partial B(0,\abs{z})$ and $\partial B(0,\abs{w})$ and along the segment $[-\abs{z},-\abs{w}]$ lying on the negative real axis. In particular, if $\abs{z}<\rho$ and $\abs{w}<\rho$ then $z$ and $w$ may be connected by a path lying in $\Omega_n\cap B(0,\rho)$.

We now pass to the domain $\Omega$.
Let $z_0$, $z_1\in\Omega\cap B(0,r)$. Then for $k=0$, $1$, we have $z_k=\Phi(\zeta_k)$ for some $\zeta_0$, $\zeta_1\in\D$. Let $\varepsilon=\min(r-\abs{z_0},r-\abs{z_1})/2$, and let $n$ be large enough that $\abs{\Phi_n(\zeta)-\Phi(\zeta)}<\varepsilon$ for all $\zeta\in\D$.
So
\[\abs{\Phi_n(\zeta_k)}
\leq\abs{\Phi(\zeta_k)}+\abs{\Phi_n(\zeta_k)-\Phi(\zeta_k)}
<\abs{z_k}+\varepsilon
\leq r-\varepsilon,\]
and thus $\Phi_n(\zeta_k)\in \Omega_n\cap B(0,r-\varepsilon)$. Choosing $\rho=r-\varepsilon$, we see that $\Omega_n\cap B(0,r-\varepsilon)$ is path-connected, and so there is some continuous function $\gamma_n:[0,1]\mapsto\Omega_n\cap B(0,r-\varepsilon)$ such that  $\gamma_n(0)=\Phi_n(\zeta_0)$ and $\gamma_n(1)=\Phi_n(\zeta_1)$. Consider $\gamma(t)=\Phi(\Phi_n^{-1}(\gamma_n(t)))$. This is a continuous path connecting $z_0$ and $z_1$, and $\gamma([0,1])\subset\Omega$. Furthermore, $\abs{\gamma(t)-\gamma_n(t)}<\varepsilon$ and so $\gamma([0,1])\subset B(0,r)$ as well.

Thus, $\Omega\cap B(0,r)$ is path-connected for all $r>0$. Similarly, $\Omega\setminus\overline{B(0,r)}$ is connected. This completes the proof that $\Omega$ is symmetric.
\end{proof}

We now show that among bounded simply connected symmetric domains, $h$-functions uniquely determine the domain.

\begin{theorem}\label{thm:unique} Suppose that $h_{\Omega}=h_{\widetilde\Omega}$, for two planar domains $\Omega$ and $\widetilde\Omega$ such that
\begin{thmenumerate}
\item \label{item:unique:bdd} $\Omega$ and $\widetilde\Omega$ are bounded and contain the point~$0$,
\item \label{item:unique:symm} $\Omega$ and $\widetilde\Omega$ are symmetric in the sense of \dfnref{symmetric}, and
\item \label{item:unique:sccd} $\Omega$ and $\widetilde\Omega$ are simply connected.
\end{thmenumerate}
Then $\Omega=\widetilde\Omega$.
\end{theorem}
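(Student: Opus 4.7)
The plan is to adapt the monotonicity-based uniqueness argument of \lemref{circleunique} from the circle-domain setting to the simply connected setting. First I would set up coordinates: by \dfnref{symmetric}, for any bounded symmetric $\Omega$ the set $\partial B(0,r)\setminus\Omega$ is, for each $r>0$, empty, the whole circle, or a closed arc symmetric about the positive real axis; denote its half-arclength by $\psi_\Omega(r)$ (with the convention $\psi_\Omega(r)=-\infty$ when empty). Then $\Omega$ is determined up to a set of capacity zero by $\psi_\Omega$, and likewise $\widetilde\Omega$ by $\psi_{\widetilde\Omega}$. The inscribed and circumscribed radii $\mu,M$ can be read off $h_\Omega$ as $\mu=\inf\{r:h_\Omega(r)>0\}$ and $M=\inf\{r:h_\Omega(r)=1\}$, so they coincide for the two domains.

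I would then argue by contradiction. Supposing $\Omega\neq\widetilde\Omega$ up to capacity, one of $E:=\widetilde\Omega\cap\Omega^c$ or $\widetilde E:=\Omega\cap\widetilde\Omega^c$ has positive capacity; without loss of generality $E$ does. Form $U:=\Omega\cap\widetilde\Omega$. By the path-connectedness argument of \lemref{mainsymmetric} (using that $(-M,\mu)\subset U$ and that $\partial B(0,r)\cap U$ is a connected arc for every $r$), $U$ is path-connected and symmetric with $\psi_U=\max(\psi_\Omega,\psi_{\widetilde\Omega})$; since $\hat\C\setminus U=(\hat\C\setminus\Omega)\cup(\hat\C\setminus\widetilde\Omega)$ is the union of two connected subsets of the Riemann sphere sharing $\infty$, $U$ is also simply connected, bounded, and contains $0$.

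The key computation applies both monotonicity properties. Writing $\partial U\cap\overline{B(0,r)}=F_1\cup F_2$ with $F_1\subset\partial\Omega$ and $F_2\subset\partial\widetilde\Omega$, the first monotonicity yields $\omega(0,F_j,U)\leq h_\Omega(r)=h_{\widetilde\Omega}(r)$ for $j=1,2$, while the second yields
\[
h_\Omega(r)\leq\omega(0,F_1,U)+\omega(0,\partial U\setminus\partial\Omega,U),\qquad h_{\widetilde\Omega}(r)\leq\omega(0,F_2,U)+\omega(0,\partial U\setminus\partial\widetilde\Omega,U).
\]
The interface $\partial U\setminus\partial\widetilde\Omega\subset\partial\Omega\cap\widetilde\Omega$ is precisely the topological boundary of $E$ inside $\widetilde\Omega$; the positive capacity of $E$ forces $\omega(0,\partial U\setminus\partial\widetilde\Omega,U)>0$. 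Combining these bounds with $h_\Omega=h_{\widetilde\Omega}$ and the identity $h_U(r)=\omega(0,F_1\cup F_2,U)$ should produce a strict inequality incompatible with $h_\Omega=h_{\widetilde\Omega}$.

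The main obstacle is making the strict inequality step rigorous. In \lemref{circleunique}, strict containment of boundary arcs gave a strict harmonic-measure inequality via a discrete count; here the analog is to convert positive capacity of $E$ into strictly positive harmonic measure of the interface $\partial\Omega\cap\widetilde\Omega$ from $0\in U$, and then to propagate this strict inequality through the two monotonicity bounds so as to contradict $h_\Omega=h_{\widetilde\Omega}$, all while carefully accounting for the overlap $F_1\cap F_2$ of points lying on both $\partial\Omega$ and $\partial\widetilde\Omega$.
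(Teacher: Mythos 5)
Your approach has a genuine gap, and you have correctly located it yourself: the strict-inequality step is not an implementation detail but the entire content of the theorem, and the framework you set up cannot produce it. The four inequalities you write down --- $\omega(0,F_j,U)\leq h_\Omega(r)$, $h_\Omega(r)\leq \omega(0,F_1,U)+\omega(0,\partial U\setminus\partial\Omega,U)$, its analogue for $\widetilde\Omega$, and $h_U(r)=\omega(0,F_1\cup F_2,U)$ --- are mutually consistent whether or not $\Omega=\widetilde\Omega$: together they only say that $\omega(0,F_1,U)$ and $\omega(0,F_2,U)$ each lie within $\omega(0,\partial U\setminus\partial\Omega,U)$, respectively $\omega(0,\partial U\setminus\partial\widetilde\Omega,U)$, of the common value $h_\Omega(r)$, and showing these interface terms are positive does not contradict anything. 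The strictness in \lemref{circleunique} comes from the discrete radial structure: the set $E$ of ``too-long'' arcs of $X$ and the corresponding arcs $\widetilde E$ of $\widetilde X$ are unions of complete radial levels, so the hypothesis $h_X=h_{\widetilde X}$ gives the exact identity $\omega(0,E,X)=\omega(0,\widetilde E,\widetilde X)$, which is then beaten strictly by monotonicity. For domains whose boundary is spread over a continuum of radii there is no identified analogue of this identity for the sets where $\psi_\Omega>\psi_{\widetilde\Omega}$, and since $\Omega\cap\widetilde\Omega$ is no longer of the form ``$\widetilde\Omega$ minus a piece of its own boundary,'' the comparison does not close. (A secondary issue: even if it worked, your argument would only give equality up to a polar set, while the theorem asserts $\Omega=\widetilde\Omega$.)

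The paper's proof is entirely different and does not pass through domain monotonicity at all. It works with the normalized Riemann maps $\Phi,\widetilde\Phi:\D\to\Omega,\widetilde\Omega$. Symmetry forces $\Phi(\overline z)=\overline{\Phi(z)}$ and, together with orientation preservation, forces $\theta\mapsto\lim_{r\to1^-}\abs{\Phi(re^{i\theta})}$ to be nondecreasing on $[0,\pi]$; comparing sublevel sets of this function with harmonic measure in $\D$ yields $\lim_{r\to1^-}\abs{\Phi(re^{i\theta})}=h_\Omega^{-1}(\abs\theta/\pi)$ for a.e.\ $\theta$ (here simple connectivity gives strict monotonicity of $h_\Omega$ on $[\mu,M]$, so $h_\Omega^{-1}$ makes sense). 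Since $h_\Omega=h_{\widetilde\Omega}$, the boundary values of $\Phi$ and $\widetilde\Phi$ have equal modulus a.e., so $w=\re\log(\Phi/\widetilde\Phi)$ is a bounded harmonic function on $\D$ with vanishing radial boundary values a.e.; hence $w\equiv0$, $\log(\Phi/\widetilde\Phi)$ is an imaginary constant, and the normalization $\lim_{r\to1^-}\Phi(r)=\mu=\lim_{r\to1^-}\widetilde\Phi(r)$ forces $\Phi\equiv\widetilde\Phi$. If you want to salvage your route, you would essentially have to prove that $h_\Omega$ determines the radial profile $\psi_\Omega$ pointwise, which is exactly what the Riemann-map computation delivers.
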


\begin{proof}
Let $\mu=\sup\{r:h_\Omega(r)=0\}$, and let $M=\inf\{r:h_\Omega(r)=1\}$. By the properties of $h$-functions, $\mu$ and $M$ are the largest and smallest numbers, respectively, such that $B(0,\mu)\subset\Omega\subset B(0,M)$; since $h_\Omega = h_{\widetilde\Omega}$ we have that $B(0,\mu)\subset\widetilde\Omega\subset B(0,M)$.

Let $\Phi:\D\mapsto \Omega$, $\widetilde\Phi:\D\mapsto \widetilde\Omega$ be the Riemann maps; we may assume that $\Phi(0)=\widetilde\Phi(0)=0$, and that $\Phi'(0)>0$, $\widetilde\Phi'(0)>0$. Then by \rmkref{sccdsymm}, $\Phi((-1,1))=(-M,\mu)$.

Now, $\Phi$ is a bounded harmonic function defined on a $C^1$ domain. As is well-known (see for example \cite[Theorem~1.4.7]{K}), it follows that the radial limit $\lim_{r\to 1^-} \Phi(re^{i\theta})$ exists for almost every $\theta\in(-\pi,\pi]$. (In fact, a stronger notion of limit, called the \emph{non-tangential limit}, exists for a.e.~$\theta$.)

Suppose that for $k=0$, $1$, $\lim_{r\to 1^-} \Phi(re^{i\theta_k})$ exists for some $\theta_0$, $\theta_1$, with $0\leq\theta_0< \theta_1\leq\pi$. Consider the segments $I_k=\{te^{i\theta_k}:0\leq t<1\}$. Then $\Phi(I_k)$ is a path connecting $0$ to $\Phi(re^{ik})$. Furthermore, $\Phi(I_0)$ and $\Phi(I_1)$ do not intersect, and they lie entirely in the (closed) upper half-plane. Since Riemann maps are orientation-preserving, and $\lim_{r\to 1^-} \Phi(re^{i\theta_k})$ lies in $\partial\Omega$, by the symmetry condition we must have that $\left|\lim_{r\to 1^-} \Phi(re^{i\theta_1})\right|\geq\left|\lim_{r\to 1^-} \Phi(re^{i\theta_0})\right|$.

Thus, $\lim_{r\to 1^-} \abs{\Phi(re^{i\theta})}$ is defined for a.e.\ $\theta\in [0,\pi]$, and is nondecreasing.

If $\mu<\rho<M$, then let $E_\rho=\partial\Omega\cap\overline{B(0,\rho)}$, and let $u_\rho(z)=\omega(z,E_\rho,\Omega)$; recall from \bartoneqref{equation}{eqn:hmeasure} that $u_\rho$ is harmonic in~$\Omega$. Then let $v_\rho=u_\rho\circ\Phi$.

Suppose that $0\leq\theta\leq\pi$, and $\lim_{r\to 1^-}\abs{\Phi(re^{i\theta})}<\rho$. Then $\lim_{r\to 1^-}\abs{v_\rho(re^{i\phi})}=1$ for a.e.\ $\abs{\phi}\leq\theta$, and so $v_\rho(0)\geq \theta/\pi$. But $h_\Omega(\rho)=u_\rho(0)=v_\rho(0)$; thus if $\lim_{r\to 1^-}\abs{\Phi(re^{i\theta})}<\rho$ and $0\leq\theta\leq\pi$, then $h_\Omega(\rho)\geq \theta/\pi$.

Recall from \rmkref{sccdsymm} that $\Phi(\overline z)=\overline{\Phi(z)}$; therefore, if $-\pi\leq\theta\leq\pi$ and $\lim_{r\to 1^-}\abs{\Phi(re^{i\theta})}<\rho$, then $h_\Omega(\rho)\geq \abs{\theta}/\pi$.

Similarly, if $\lim_{r\to 1^-}\abs{\Phi(re^{i\theta})}>\rho$, then $h_\Omega(\rho)\leq \abs{\theta}/\pi$. Because $\Omega$ is simply connected, $h_\Omega$ is strictly increasing on $[\mu,M]$. Therefore, we may extend $h_\Omega^{-1}$ to a continuous function $[0,1]\mapsto [\mu,M]$. We then have that $\lim_{r\to 1^-} \abs{\Phi(re^{i\theta})}=h_\Omega^{-1}(\abs{\theta}/\pi)$ for a.e.~$\theta$.

Similarly, $\lim_{r\to 1^-} \abs{\widetilde\Phi(re^{i\theta})}=h_{\widetilde\Omega}^{-1}(\abs{\theta}/\pi)$. But since $h_{\widetilde\Omega}=h_\Omega$, this means that 
\[\lim_{r\to 1^-} \abs{\widetilde\Phi(re^{i\theta})/\Phi(re^{i\theta})}=1\]
for a.e.~$\theta$.

Consider $v(z)=\Phi(z)/\widetilde\Phi(z)$. Since $\Phi(0)=\widetilde\Phi(0)=0$ and $\Phi$, $\widetilde\Phi$ are injective on $\D$, $v$ is analytic with a removable singularity at $0$, and is never 0 on $\overline\D$.

Recall that $\re\log z$ is continuous on $\C\setminus\{0\}$ and that its value does not depend on the choice of branch cut of $\log$. Thus, $w(z)=\re \log (\Phi(z)/\widetilde\Phi(z))$ is continuous on~$\D$. For any given $z\in\D$, we may take the branch cut of $\log$ to avoid a neighborhood of $\Phi(z)/\widetilde\Phi(z)$. Thus $w(z)$ is harmonic in a neighborhood of any point $z\in\D$, and so $w(z)$ is harmonic on all of~$\D$.

Furthermore, $\lim_{r\to 1^-}w(rz)=0$ for a.e.\ $z\in \partial\D$; thus $\re\log v=w\equiv 0$ in $\D$, and so $\log v$ must be an imaginary constant on~$\D$. Since $\lim_{r\to 1^-}\Phi(r)=\mu=\lim_{r\to 1^-}\widetilde\Phi(r)$, we must have that $\log v\equiv 0$ and so $\Phi\equiv\widetilde\Phi$. Since $\Omega=\Phi(\D)$ and $\widetilde\Omega=\widetilde\Phi(\D)$, this implies that $\Omega=\widetilde\Omega$.\end{proof}

It is possible to  weaken slightly the conditions \eqref{item:unique:bdd} and \eqref{item:unique:sccd}.
Suppose that $\Omega$ is symmetric.

If $\Omega$ is simply connected, then $\Omega$ is bounded if and only if $\partial\Omega$ is bounded, which in turn is true if and only if $h_{\Omega}(M)=1$ for some finite number~$M$.

Conversely, suppose that $\Omega$ is symmetric, bounded and connected. We know that there exist numbers $\mu$, $M$ such that $h_{\Omega}=0$ on $[0,\mu)$, $0<h_{\Omega}<1$ on $(\mu,M)$, and $h_{\Omega}=1$ on $[M,\infty)$. Then $B(0,\mu)\subset\Omega$ up to a set of harmonic capacity zero. If $B(0,\mu)\subset\Omega$ and $\Omega\cap[\mu,M]$ is empty, then by symmetry $\Omega$ is simply connected. Again by symmetry, $\Omega\cap[\mu,M]$ is empty if and only if $h_{\Omega}$ is strictly increasing on $[\mu,M]$.

Thus, if $\widetilde\Omega$ is symmetric and either simply connected or bounded and connected, and if $h_{\widetilde\Omega}=h_\Omega$ for some domain $\Omega$ that satisfies all three conditions of \thmref{unique}, we have that up to a set of harmonic measure zero, $\widetilde\Omega$ must be bounded and simply connected and so the theorem holds.

However, we do need to require that $\widetilde\Omega$ be bounded or simply connected. In \cite[Example~1]{WalW01}, the authors showed that the bounded simply connected symmetric domain $B(-(M-\mu)/2, (M+\mu)/2)$ has the same harmonic measure distribution function as the unbounded, doubly connected symmetric domain $\C\setminus \overline{B((\mu+M)/2,(M-\mu)/2)}$.

As seen from the example of circle domains, we probably cannot weaken the condition \eqref{item:unique:symm} that both $\Omega$ and $\widetilde\Omega$ be symmetric. It is possible to weaken the requirement that $\Omega$ be simply connected by generalizing \lemref{circleunique}, but removing this requirement entirely  is beyond the scope of this paper.

We conclude this section by remarking that, if a function $f$ satisfies the conditions of \thmref{main} or \thmref{fjump}, then there exists a bounded simply connected domain $\Omega$ such that $f=h_\Omega$. By \lemref{mainsymmetric}, $\Omega$ is symmetric. So \thmref{unique} gives uniqueness, and the following theorem is proven.

\begin{theorem}
\label{thm:mainunique}
Let $f$ be a function that satisfies the conditions of \thmref{main} or \thmref{fjump}. Then there exists a domain $\Omega$ that is bounded and simply connected, has locally connected complement and is symmetric in the sense of \dfnref{symmetric}, such that $f=h_\Omega$. Furthermore, this domain $\Omega$ is unique up to sets of harmonic measure zero among bounded symmetric domains.
\end{theorem}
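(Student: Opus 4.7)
The plan is to assemble the statement from three earlier ingredients already established in this paper: existence of a bounded simply connected $\Omega$ from \thmref{main} (which \thmref{fjump} explicitly reduces to), symmetry of this $\Omega$ from \lemref{mainsymmetric}, and uniqueness from \thmref{unique}. Only a small gluing argument is needed to fit them together.

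For existence and symmetry, I would invoke \thmref{main} to produce a sequence $\{\Omega_n\}_{n=1}^\infty$ of blocked circle domains satisfying the hypotheses of \thmref{snipesbarton}. Applying \thmref{snipesbarton}---which internally uses \thmref{sccdconv}---then yields a bounded simply connected $\Omega$ containing~$0$, with locally connected complement and $f=h_\Omega$, realized as $\Omega=\Phi(\D)$ where the normalized Riemann map $\Phi$ is the uniform limit on $\overline{\D}$ of a subsequence $\{\Phi_{n(k)}\}$ of the normalized Riemann maps $\Phi_n\colon\D\to\Omega_n$. Since blocked circle domains are symmetric in the sense of \dfnref{symmetric}, \lemref{mainsymmetric} applied to the convergent subsequence $\{\Omega_{n(k)}\}$ then concludes that $\Omega$ itself is symmetric.

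For uniqueness, let $\widetilde\Omega$ be any bounded domain symmetric in the sense of \dfnref{symmetric} with $h_{\widetilde\Omega}=f$. The goal is to verify the hypotheses of \thmref{unique}; the only nontrivial point is that $\widetilde\Omega$ must be simply connected up to harmonic capacity zero. For this I would use the remarks immediately following \thmref{unique}: for a bounded symmetric connected domain with inner and outer radii $\mu$ and $M$, the intersection $\widetilde\Omega\cap[\mu,M]$ is empty---which, together with $B(0,\mu)\subset\widetilde\Omega$ up to capacity zero, forces simple connectivity modulo capacity zero---if and only if $h_{\widetilde\Omega}$ is strictly increasing on $[\mu,M]$. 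Strict increase is in turn guaranteed by the hypotheses on $f$: in \thmref{main} by the positive minimal secant slope $\alpha>0$, and in \thmref{fjump} by explicit assumption. Applying \thmref{unique} then yields $\widetilde\Omega=\Omega$ up to a set of harmonic capacity zero. The main obstacle is organizational rather than technical---essentially all the machinery lives in earlier sections---with the one genuinely new observation being that the strict monotonicity of $f$ built into the hypotheses of \thmref{main} and \thmref{fjump} is exactly what upgrades a bounded symmetric competitor $\widetilde\Omega$ to a bounded symmetric simply connected one, making \thmref{unique} applicable.
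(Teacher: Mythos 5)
Your proposal is correct and follows essentially the same route as the paper, which assembles the theorem from \thmref{main}/\thmref{fjump} for existence, \lemref{mainsymmetric} for symmetry, and \thmref{unique} together with the remarks following it for uniqueness among bounded symmetric (not a priori simply connected) competitors. The only cosmetic difference is that you deduce the strict monotonicity of $f$ on $[\mu,M]$ directly from the hypothesis $\alpha>0$, whereas the paper deduces it from the simple connectivity of the realized domain $\Omega$; both yield the same upgrade of $\widetilde\Omega$ to a simply connected domain up to capacity zero.
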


\section{Circle domains and estimates of harmonic measure}\label{sec:circle}

In this section we state and prove some lemmas, involving harmonic measure and circle domains, that we have used in this paper. We begin with a fundamental estimate of harmonic measure. Next come our estimates for individual circle domains, and finally our estimates for sequences of blocked circle domains.

Specifically, Lemmas~\ref{lem:channelstraight} and~\ref{lem:curvechannel} let us bound the harmonic measure of a set at the bottom of a channel (such as a short arc in a circle domain or a gate in a blocked circle domain). \lemref{fjump} puts a lower bound on the length of an arc in terms of its harmonic measure, while \lemref{deriv} controls the harmonic measure of an arc lying between two other arcs. 
\lemref{radial} puts a bound on the difference in harmonic measure distribution functions between a circle domain and the corresponding blocked circle domain (using Lemmas~\ref{lem:curvechannel} and~\ref{lem:altgatebound}). Finally,
\lemref{unifcctd} provides a sufficient condition for a sequence of blocked circle domains to be uniformly locally connected.

\subsection{A fundamental estimate on harmonic measure}

The following estimate
 is extremely useful.
\begin{lemma}[{\cite[Theorem~H.8]{GarM}}]\label{lem:channelstraight} Suppose $D$ is a finitely connected
Jordan domain and suppose \[F\subset
\left\{z\in\overline D :\re(z)\geq b\right\}.\]
Let $z_0\in D$ with $\re (z_0)= x_0<b$ and assume that for $x_0<x<b$,
$I_x\subset \{z\in D:\re(z)=x\}$ separates $z_0$ from $F$. If the length $\theta(x)$ of $I_x$ is measurable, then
\[\omega(z_0,F,D)\leq \frac{8}{\pi}\exp\left(
-\pi\int_{x_0}^b\frac{dx}{\theta(x)}\right).\]
\end{lemma}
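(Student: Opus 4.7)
The plan is to establish this estimate by the method of extremal length, combining two classical ingredients: a universal upper bound for harmonic measure in terms of extremal distance, and a lower bound for the extremal distance from $z_0$ to $F$ obtained by exhibiting an explicit admissible conformal metric.

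First, I would invoke the standard estimate that in any domain $D$, for any point $z_0 \in D$ separated from a boundary set $F$,
\[
\omega(z_0, F, D) \leq \frac{8}{\pi} \exp\!\bigl(-\pi\, \lambda_D(z_0, F)\bigr),
\]
where $\lambda_D(z_0, F)$ denotes the extremal distance from $z_0$ to $F$ in $D$. This inequality is obtained by conformally mapping the pair $(D, F)$ into a canonical strip or slit half-plane so that harmonic measure can be computed explicitly through a M\"obius transformation; the constant $8/\pi$ arises from the boundary behavior of $\arctan$ after this reduction.

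Second, I would bound $\lambda_D(z_0, F)$ from below by exhibiting the admissible metric
\[
\rho(x + iy) = \frac{1}{\theta(x)} \text{ on } E := \bigcup_{x_0 < x < b} I_x, \qquad \rho \equiv 0 \text{ elsewhere.}
\]
Every locally rectifiable curve $\gamma$ in $D$ joining $z_0$ to $F$ must cross each separating segment $I_x$, so its real part traverses $[x_0, b]$ and
\[
\int_\gamma \rho\, \abs{dz} \;\geq\; \int_\gamma \frac{\abs{d(\re z)}}{\theta(\re z)} \;\geq\; \int_{x_0}^b \frac{dx}{\theta(x)}.
\]
Meanwhile, by Fubini's theorem,
\[
\iint_D \rho^2\, dA \;=\; \int_{x_0}^b \frac{\theta(x)}{\theta(x)^2}\, dx \;=\; \int_{x_0}^b \frac{dx}{\theta(x)}.
\]
By the variational characterization of extremal length as $\sup_\rho L(\rho)^2 / A(\rho)$, we obtain
\[
\lambda_D(z_0, F) \;\geq\; \int_{x_0}^b \frac{dx}{\theta(x)},
\]
and combining this with the first ingredient gives the claimed estimate.

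The main obstacle is the first ingredient: producing the precise constant $8/\pi$ linking harmonic measure to extremal distance requires either an explicit conformal reduction to a model domain or an appeal to Beurling's projection theorem, neither of which is elementary. The second step, by contrast, is a direct Fubini-type computation once the correct test metric is guessed. In our context we simply quote the inequality from \cite{GarM} rather than reproving it.
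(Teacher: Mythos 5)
The paper does not prove this lemma at all: it is imported verbatim from Garnett and Marshall \cite[Theorem~H.8]{GarM}, and your closing remark that in context one simply quotes the inequality is exactly what the paper does. Your sketch of how the cited result is actually proved is faithful to the textbook argument: the estimate is obtained by combining a harmonic-measure/extremal-distance inequality with the lower bound for extremal distance coming from the admissible metric $\rho=1/\theta(x)$ supported on the separating cross cuts, and your length and area computations for that metric are correct (for the length bound one should note that $\rho$ vanishes off $\bigcup_x I_x$, so the inequality $\int_\gamma\rho\,\abs{dz}\geq\int_{x_0}^b dx/\theta(x)$ really rests on the fact that $\gamma$ must meet $I_x$, where $\rho=1/\theta(x)$, for every $x$; the Banach indicatrix argument then gives the claim).

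One point would need repair if this were to stand as a proof rather than a citation: the quantity $\lambda_D(z_0,F)$, the extremal distance from the \emph{point} $z_0$ to $F$, is always $+\infty$ --- the metric $\rho(z)=\bigl(\abs{z-z_0}\log(1/\abs{z-z_0})\bigr)^{-1}$ near $z_0$ has finite area integral but assigns infinite length to every curve emanating from $z_0$ --- so the inequality in your first ingredient is vacuous as literally stated. The correct form replaces the point by a crosscut $\sigma$ separating $z_0$ from $F$: one first uses the maximum principle to bound $\omega(z_0,F,D)$ by $\sup_{z\in\sigma}\omega(z,F,D')$, where $D'$ is the part of $D$ on the $F$-side of $\sigma$, and then applies the bound $\frac{8}{\pi}e^{-\pi\lambda_{D'}(\sigma,F)}$. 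Since your metric is supported on $\bigcup_{x_0<x<b}I_x$ and avoids a neighborhood of $z_0$ anyway, what your computation actually bounds from below is precisely this crosscut-to-$F$ extremal distance; once the first ingredient is stated in that form, the two halves fit together and the argument is the standard one from \cite{GarM}.
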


\begin{figure}
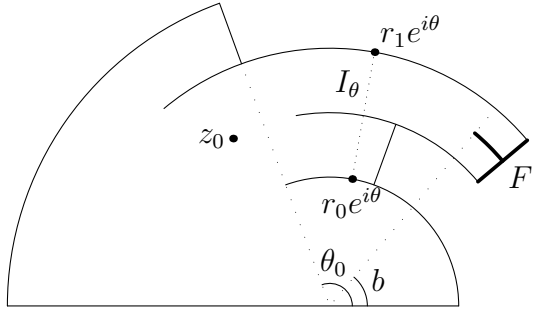

\mypic{4}
\caption{The curved domain of \lemref{curvechannel}: If the channel is long and thin, then the harmonic measure of $F$
from $z_0$ is small.}
\label{fig:channel}
\end{figure}

Roughly speaking, if a Brownian particle is released from a point $z_0$ inside a long, thin channel, the probability that the particle reaches the end of the channel before it hits the side increases with the width of the channel and decreases exponentially with the length of the channel.

We may adapt this lemma to blocked circle domains. See \figref{channel} for an illustration.
\begin{lemma}\label{lem:curvechannel} Suppose $D$ is a domain, and suppose
\[F\subset
\left\{z\in\overline D : 0 \leq \arg z \leq b, r_0 < \abs{z}< r_1\right\}.\]
Here we take $-\pi<\arg z\leq\pi$ for all $z\in\C$; we require $0\leq b$, $0<r_0<r_1$.

Assume that $D$ contains no points of the segment $[r_0,r_1]$ of the real line, and that there exists a $\theta_0>0$ such that  $r_k e^{i\theta}\notin D$ for $k=0$, $1$ and $0\leq\theta \leq \theta_0$.

Let $\widetilde D = \{re^{i\theta}:r_0<r<r_1, \> b<\theta<\theta_0\}$; we remark that $\widetilde D$ is a Jordan domain.
If $z_0 \in D\setminus \widetilde D$, then
\[\omega(z_0,F,D)\leq \frac{16}{\pi}\exp\left(-\pi r_0 \frac{\theta_0-b}{2(r_1-r_0)}\right).\]
\end{lemma}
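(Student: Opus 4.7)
The plan is to straighten the annular channel $\widetilde D$ into a rectangle via the conformal map $\phi(z) = \log z$, and then invoke the straight-channel estimate \lemref{channelstraight}. The map $\phi$ sends $\widetilde D$ bijectively onto $R = (\log r_0, \log r_1) \times (b, \theta_0)$, carrying the top arc $T_{\mathrm{top}} = \{re^{i\theta_0} : r_0 < r < r_1\}$ to the top edge $\{y = \theta_0\}$, the bottom arc $T_{\mathrm{bot}} = \{re^{ib} : r_0 < r < r_1\}$ to the bottom edge $\{y = b\}$, and the two radial side arcs of $\widetilde D$ to the vertical edges of $R$.

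First I would reduce from $D$ to $\widetilde D$. Let $D_\star = \{re^{i\theta} : 0 < \theta < b,\ r_0 < r < r_1\}$, which contains $F$ in its closure. The boundary of $D_\star$ consists of $[r_0, r_1]$, the two arc pieces $\{r_k e^{i\theta} : 0 \leq \theta \leq b\}$, and the radial segment $T_{\mathrm{bot}}$; the first three are disjoint from $D$ by hypothesis. Hence $D_\star$ is reachable from the rest of $D$ only across $T_{\mathrm{bot}}$; in turn, since the radial sides of $\widetilde D$ also lie outside $D$, the set $T_{\mathrm{bot}}$ is reachable from $D \setminus (\widetilde D \cup D_\star)$ only by first crossing $T_{\mathrm{top}}$ and traversing the channel $\widetilde D$. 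The strong Markov property together with monotonicity in the domain yields
\[\omega(z_0, F, D) \;\leq\; \omega(z_0, T_{\mathrm{bot}}, D \setminus \overline{D_\star}) \;\leq\; \sup_{w}\, \omega(w, T_{\mathrm{bot}}, \widetilde D),\]
the supremum being over $w \in \widetilde D$ tending to $T_{\mathrm{top}}$.

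Second, conformal invariance of harmonic measure gives $\omega(w, T_{\mathrm{bot}}, \widetilde D) = \omega(\phi(w), \{y=b\}, R)$. After a $90^\circ$ rotation to align the channel with the orientation required by \lemref{channelstraight}, the cross-sections $I_x$ are segments of constant length $\log(r_1/r_0)$ for $x \in (b, \theta_0)$, and the lemma gives
\[\omega(\phi(w), \{y=b\}, R) \;\leq\; \frac{8}{\pi}\,\exp\!\left(-\pi\,\frac{\theta_0 - b}{\log(r_1/r_0)}\right).\]
The elementary inequality $\log(1+x) \leq x$ applied with $x = (r_1 - r_0)/r_0$ gives $\log(r_1/r_0) \leq (r_1 - r_0)/r_0$, hence $(\theta_0 - b)/\log(r_1/r_0) \geq r_0(\theta_0 - b)/(2(r_1 - r_0))$, which assembles into the stated bound, with the factor $16/\pi$ in place of $8/\pi$ absorbing the slack in the subordination step (or, equivalently, a symmetric reflection across $T_{\mathrm{top}}$ needed to apply the straight-channel lemma when $\phi(z_0)$ lies outside $R$). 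The main technical obstacle will be the first step: arguing rigorously, using only the hypotheses on which arcs and segments lie outside $D$, that the pocket $D_\star$ containing $F$ is accessible from the rest of $D$ only through $T_{\mathrm{bot}}$, and controlling the boundary behavior of $\omega(w, T_{\mathrm{bot}}, \widetilde D)$ as $w$ approaches $T_{\mathrm{top}}$ from inside the channel; once this geometric reduction is made, the remainder is a direct conformal-invariance calculation together with \lemref{channelstraight}.
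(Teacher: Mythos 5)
Your overall strategy---reduce to the channel $\widetilde D$ by the strong Markov property, straighten it with a logarithm, apply \lemref{channelstraight}, and finish with $\log(r_1/r_0)\le (r_1-r_0)/r_0$---is exactly the paper's. But the step you yourself flag as the ``main technical obstacle'' is a genuine gap, and as written the reduction fails. You bound $\omega(z_0,T_{\mathrm{bot}},D\setminus\overline{D_\star})$ by $\sup_w \omega(w,T_{\mathrm{bot}},\widetilde D)$ over $w$ ``tending to $T_{\mathrm{top}}$.'' Since $T_{\mathrm{top}}$ is part of $\partial\widetilde D$ and carries boundary value $0$ for the harmonic measure of $T_{\mathrm{bot}}$ in $\widetilde D$, that supremum tends to $0$, so the chain of inequalities cannot be literally correct; the reason it cannot be repaired at $T_{\mathrm{top}}$ itself is that a Brownian path in $D$ may exit the channel back through its mouth and re-enter, which killing at $\partial\widetilde D$ does not account for. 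One must restart the path at a cross-section strictly interior to the channel, not at its mouth.

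The paper's fix is the source of both the constant $16/\pi$ and the $2$ in the exponent. Let $I_\theta=\{re^{i\theta}:r_0<r<r_1\}$ and let $\theta_1=(b+\theta_0)/2$ be the midline of the channel. Every path from $z_0$ to $F$ must cross $I_{\theta_1}$, so $\omega(z_0,F,D)\le\sup_{z\in I_{\theta_1}}\omega(z,F,D)$; for $z\in I_{\theta_1}$, since $F\cap\widetilde D=\emptyset$ and the two circular arcs of $\partial\widetilde D$ lie outside $D$ by hypothesis, a path to $F$ must first leave $\widetilde D$ through $I_b\cup I_{\theta_0}$, whence $\omega(z,F,D)\le\omega(z,I_b\cup I_{\theta_0},\widetilde D)=2\,\omega(z,I_b,\widetilde D)$ by the reflection symmetry of $\widetilde D$ about the midline. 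Then \lemref{channelstraight} is applied over the \emph{half}-channel of length $(\theta_0-b)/2$ and width $\log(r_1/r_0)$, giving $\frac{8}{\pi}\exp\bigl(-\pi(\theta_0-b)/(2\log(r_1/r_0))\bigr)$, and the factor $2$ from the symmetry step yields $16/\pi$. In your accounting the $2$ in the denominator of the exponent appears only as gratuitous slack inserted after the inequality $\log(r_1/r_0)\le(r_1-r_0)/r_0$; once the reduction is done correctly it is forced by the half-length of the channel. With the midline cross-section substituted for your limit at $T_{\mathrm{top}}$, the rest of your argument goes through.
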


\begin{proof} Our approach was suggested by the derivation of inequality~(19) in~\cite{SniW08}. Define $I_\theta:=\{re^{i\theta}:r_0<r<r_1\}$.

If $z_0\in D\setminus\widetilde D$ and $b<\theta<\theta_0$, then
\[\omega(z_0,F,D) \leq \sup_{z\in I_\theta} \omega(z,F,D).\]
Let $\theta_1-b = \theta_0-\theta_1$. If $z\in I_{\theta_1}$, then
\[\omega(z, F, D)\leq \omega(z, I_{\theta_0}\cup I_b, \widetilde D)
=2\omega(z,I_b,\widetilde D).\]

So
\[\omega(z_0,F,D) \leq 2 \sup_{z\in I_{\theta_1}} \omega(z,I_b,\widetilde D).\]

But $\widetilde D$ is a curved rectangle bounded away from zero. By applying the conformal map $\Phi(z)=i\log(z)$, we may transform $\widetilde D$ to the straight rectangle $\{z\in\C: -\theta_0 < \re z < -b,\> \log r_0 < \im z < \log r_1\}$. Then $\Phi(I_\theta)=\{-\theta+ir:\log r_0<r<\log r_1\}$.  \lemref{channelstraight} implies that
\[\sup_{z\in I_{\theta_1}} \omega(\Phi(z),\Phi(I_b),\Phi(\widetilde D))
\leq \frac{8}{\pi}\exp\left(-\pi \frac{\theta_0-b}{2\log(r_1/r_0)}\right).\]
Note that $\log (r_1/r_0)\leq (r_1-r_0)/r_0$.
Since $\omega(z,I_b,\widetilde D)=\omega(\Phi(z),\Phi(I_b),\Phi(\widetilde D))$, this completes the proof.\end{proof}

\subsection{Circle domains}

We seek to understand the harmonic measure distribution of circle domains more precisely. We begin by putting a lower bound on the arclength of an individual arc in terms of the harmonic measure of that arc.

\begin{lemma}\label{lem:fjump} Let $X$ be a circle domain of radius $M$, and let $A_{k}$ be the $k$th boundary arc of $X$, located at radius $r_k$ and with arclength $2\psi_{k}$.

Let $\beta=\omega(0,A_k,X)$. Suppose that $r_k\geq M(1-1/e)$. Then
\[\psi_k > \min\left\{\frac{\pi}{2},
\pi\beta -\frac{2}{\pi} \frac{M-r_k}{r_k}
\left(2\log\left(\frac{M}{M-r_k}\right)+\pi^2
\right)\right\}.
\]
\end{lemma}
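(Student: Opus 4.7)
The plan is to first dispose of the trivial case $\psi_k\geq\pi/2$, and otherwise to reduce to an upper bound on $u(0):=\omega(0,A_k,\widetilde X)$, where $\widetilde X:=B(0,M)\setminus A_k$; monotonicity in the domain (with $X\subset\widetilde X$ and $A_k\subset\partial X\cap\partial\widetilde X$) gives $\beta\leq u(0)$. Since $B(0,r_k)\subset\widetilde X$ (the arc $A_k$ lies on $\partial B(0,r_k)$, not inside it) and $u$ extends continuously to $\partial B(0,r_k)$ with $u\equiv1$ on $A_k$, the mean value identity at the origin together with the reflection symmetry $u(\bar z)=u(z)$ yields
\[u(0)=\frac{1}{2\pi}\int_0^{2\pi}u(r_ke^{i\theta})\,d\theta=\frac{\psi_k}{\pi}+\frac{1}{\pi}\int_{\psi_k}^{\pi}u(r_ke^{i\theta})\,d\theta.\]

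The heart of the argument is a pointwise bound on $u(r_ke^{i\theta})$ for $\theta\in(\psi_k,\pi)$ via \lemref{curvechannel}. Because $\widetilde X$ contains the real-axis segments $(0,r_k)\cup(r_k,M)$, that lemma's hypothesis $[r_0,r_1]\cap D=\emptyset$ fails for $D=\widetilde X$; I would work instead on the subdomain
\[D:=\{r_k\leq|z|<M\}\setminus(A_k\cup[r_k,M]),\]
which lies in $\widetilde X$, contains $r_ke^{i\theta}$ for every $\theta\in(\psi_k,\pi]$, and satisfies the lemma's hypotheses with $r_0\to r_k^-$ and $r_1=M$. The background inequality
\[u(r_ke^{i\theta})\leq\omega(r_ke^{i\theta},A_k,D)+\omega(r_ke^{i\theta},\partial D\setminus\partial\widetilde X,D)\]
then separates the main term from the correction. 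Applying \lemref{curvechannel} with $b=\psi_k$, $\theta_0=\theta$ to the target $F=A_k^+$ (the upper half of $A_k$), and invoking the reflection symmetry of $D$ and $A_k$ for $F=A_k^-$, should give an exponential bound on the first term of the form
\[\omega(r_ke^{i\theta},A_k,D)\leq\frac{32}{\pi}\exp\!\biggl(-\frac{\pi r_k(\theta-\psi_k)}{2(M-r_k)}\biggr),\]
while the correction term—the harmonic measure of the added real-axis slit—is of order $(M-r_k)/r_k$.

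Integrating over $\theta\in(\psi_k,\pi)$ then proceeds routinely: splitting at the threshold where the exponential bound equals $1$ produces a near region of length on the order of $\frac{M-r_k}{r_k}\log\frac{M}{M-r_k}$ (on which I use the trivial bound $u\leq1$), together with a far region whose exponential tail contributes $O((M-r_k)/r_k)$. Packaging the constants into a $\pi^2$-coefficient, one arrives at
\[\int_{\psi_k}^{\pi}u(r_ke^{i\theta})\,d\theta\leq\frac{2(M-r_k)}{\pi r_k}\left(2\log\frac{M}{M-r_k}+\pi^2\right),\]
which combined with the displayed formula for $u(0)$ and $\beta\leq u(0)$ gives the claim after rearranging. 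The main obstacle will be controlling the correction term $\omega(r_ke^{i\theta},\partial D\setminus\partial\widetilde X,D)$ simultaneously with the \lemref{curvechannel} estimate: the added slit along the positive real axis must make the lemma applicable without inflating the correction beyond the $\pi^2$-budget in the target. The hypothesis $r_k\geq M(1-1/e)$, equivalently $\log(M/(M-r_k))\geq1$, will be used precisely to guarantee that the logarithmic term in the target bound is large enough to absorb the remaining constants from the near region.
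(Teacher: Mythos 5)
Your opening reductions coincide exactly with the paper's: dispose of the trivial case, use monotonicity to replace $X$ by $\widetilde X=B(0,M)\setminus A_k$, and apply the mean value property on $\partial B(0,r_k)$ to reduce everything to bounding $\int_{\psi_k}^{\pi}u(r_ke^{i\theta})\,d\theta$. The divergence, and the gap, is in the pointwise estimate. First, a structural problem: your set $D=\{r_k\leq|z|<M\}\setminus(A_k\cup[r_k,M])$ is not open (every point of $\partial B(0,r_k)\setminus A_k$ has neighbours of modulus less than $r_k$), so it is not a domain; and any genuine domain that can serve as the channel must exclude a disk $B(0,r_0)$ with $r_0$ close to $r_k$, since \lemref{curvechannel} needs the inner wall $\{r_0e^{i\phi}:0\leq\phi\leq\theta_0\}$ to lie outside $D$. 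But then the evaluation point $r_ke^{i\theta}$ sits essentially on the added boundary component $\partial B(0,r_0)$, and the correction term $\omega(r_ke^{i\theta},\partial D\setminus\partial\widetilde X,D)$ is of order $\log(M/r_k)/\log(M/r_0)$, which is close to $1$ when $r_0$ is close to $r_k$ --- not of order $(M-r_k)/r_k$. (You have counted only the slit $[r_k,M]$ in $\partial D\setminus\partial\widetilde X$ and omitted the inner circle.) Pushing $r_0$ far inward to shrink the escape probability widens the channel and makes the exponential bound vacuous; there is no choice of $r_0$ that makes both terms small.

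This is not a repairable technicality: the pointwise bound you are aiming for is false. Writing $\delta=(M-r_k)/r_k$ and $t=\theta-\psi_k$, in the intermediate range $\delta\ll t\ll 1$ one has $u(r_ke^{i\theta})\gtrsim\delta/t$: with probability comparable to $\delta/t$ the particle reaches radius $r_k(1-t)$ before hitting $\partial B(0,M)$, and from there it rounds the tip $r_ke^{i\psi_k}$ and hits $A_k$ with probability bounded below by an absolute constant. Taking $t=\sqrt{\delta}$ gives a true value $\gtrsim\sqrt{\delta}$, whereas your proposed bound gives $\frac{32}{\pi}e^{-\pi/(2\sqrt{\delta})}+O(\delta)=o(\sqrt{\delta})$. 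This $1/t$ tail is exactly what produces the $\log(M/(M-r_k))$ in the statement (it is the integral of $\min(1,\delta/t)$); an exponential-plus-floor estimate would integrate to $O(\delta)$ with no logarithm, i.e.\ to a conclusion strictly stronger than the lemma, which is a signal that the estimate cannot hold. The paper avoids channel estimates here entirely: it constructs an explicit harmonic majorant $1-\frac{2}{\pi}\arg\bigl((M^\nu+iz^\nu)/(M^\nu-iz^\nu)\bigr)$ with $\nu=\pi/(2(\pi-\psi_k))$ on the sector $\{|\arg z|>\psi_k\}$, compares via the maximum principle, and integrates the resulting $\arctan$ expression directly; the hypothesis $r_k\geq M(1-1/e)$ enters there (to choose the cutoff $x_0=2(1-(r_k/M)^\nu)\leq 1$ and to use monotonicity of $x\log(1/x)$), not to absorb constants as you suggest.
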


We will use this lemma only when $(M-r_k)/M$ is small enough (depending on~$\beta$); if $(M-r_k)/M$ is large then the obvious inequality $\psi_k\geq 0$ provides a better estimate.

\begin{proof}
Write $A=A_k$, $\psi = \psi_k$, $r=r_k$. If $\beta=0$ then $\psi=0\geq\pi\beta$; if $\beta=1$ or $r=M$ then $\psi=\pi\geq \pi/2$, and so we are done. Otherwise, $0<\beta<1$ and $0<r< M$, and so $0<\psi<\pi$.

Let $u_A(z)=\omega(z,A,B(0,M)\setminus A)$; recall from \bartoneqref{Formula}{eqn:hmeasure} that $u_A$ is harmonic in $B(0,M)\setminus A$, $u_A=1$ on $A$ and $u_A=0$ on $\partial B(0,M)$. By the property of monotonicity in the domain of harmonic measure,
\begin{align*}
\beta&=\omega(0,A,X)\leq\omega(0,A,B(0,M)\setminus A)=u_A(0).
\end{align*}
Since $u_A$ is harmonic in $B(0,r)$, we have that
\begin{align*}
\beta&\leq u_A(0)
=\frac{1}{2\pi}\int_{-\pi}^\pi u_A(re^{i\theta})\,d\theta.
\end{align*}

We seek an upper bound on $u_A$ on $\partial B(0,r)$. Consider the function
\[u(z)=1-\frac{2}{\pi} \arg\left(\frac{M^\nu+iz^\nu}{M^\nu-i z^\nu}\right)\]
where $\nu>0$ is a positive real number (not necessarily an integer). We take the branch cut of $z^\nu$ and of $\arg$ to lie along the negative real axis.

Suppose that $\abs{z_0}\leq M$ and that $\abs{\arg z_0}\leq \pi/(2\nu)$. We claim that if $z_0\neq Me^{\pm i\pi/(2\nu)}$, then $u(z)$ is continuous and harmonic in a neighborhood of~$z_0$.

Since $\abs{\arg z_0}<\pi$, $M\pm iz^\nu$ is analytic in a neighborhood of~$z_0$. If $\abs{z_0}\leq M$ then $\re (M^\nu \pm iz_0^\nu)\geq M^\nu-\abs{z_0}^\nu\geq0$. If $\abs{z_0}<M$ or $\abs{\arg z_0}<\pi/(2\nu)$ then this inequality is strict, so $\re M^\nu \pm iz_0^\nu>0$. By continuity this inequality holds in a neighborhood of $z_0$. So the function $z\mapsto(M^\nu+iz^\nu)/(M^\nu-i z^\nu)$ is analytic and bounded away from the negative real axis (the branch cut of $\arg$) in a neighborhood of~$z_0$; thus $u(z)$ is harmonic in that neighborhood.

For all such $z$ we can write
\begin{align*}
u(z)&=1-\frac{2}{\pi} \arg\left(\frac{M^\nu+iz^\nu}{M^\nu-i z^\nu}\times\frac{M^\nu+i \overline{z^\nu}}{M^\nu+i \overline{z^\nu}}\right)
\\&=1-\frac{2}{\pi} \arg\left(
M^{2\nu} -\abs{z}^{2\nu} + 2i M^\nu \re (z^\nu)
\right).
\end{align*}

If $\abs{z}=M$ and $\abs{\arg z}<\pi/(2\nu)$, then
\[u(z)
= 1-\frac{2}{\pi}\arg\left(2i M^\nu \re (z^\nu)\right)
=0.\]
Similarly, if $|\arg z|= \pi/(2\nu)$ then $\re (z^\nu) = 0$, and so if in addition $\abs{z}<M$ then
\[u(z)=
1-\frac{2}{\pi} \arg\left(M^{2\nu} -\abs{z}^{2\nu}\right)=1
.\]

Choose $\nu=\pi/(2(\pi-\psi))$; then $\nu>1/2$ since $\psi>0$. Consider the domain
\[\Omega=\{z\in B(0,M): \abs{\arg z}>\psi\}\subset B(0,M)\setminus A.\]
Then $\partial\Omega$ consists of three smooth pieces: the two straight lines from the origin to $Me^{\pm i\psi}=-Me^{\pm i\pi/2\nu}$, and the arc (of radius $M$) connecting these two points and passing through the negative real axis.

Then $u_A$ is harmonic in $\Omega\subset B(0,M)\setminus A$ and continuous on~$\overline\Omega$, and the function $v(z)=u(-z)$ is harmonic in $\Omega$ and continuous on $\overline\Omega$ except at the points $Me^{\pm i\psi}$. We remark that the boundary values of $v$ are known; $v(z)=1$ on the straight segments and $v(z)=0$ on the boundary circle. $u_A(z)$ is also zero on the boundary circle; on the straight segments, $u_A(z)$ is unknown but satisfies $0<u_A(z)\leq 1=v(z)$ with equality holding only at $z=re^{\pm i\psi}$.

So by the maximum principle $0 < u_A(z)< v(z)< 1$ in~$\Omega$.
Recall that
\begin{align*}
\beta&\leq \frac{1}{2\pi}\int_{-\pi}^\pi u_A(re^{i\theta})\,d\theta
\end{align*}
where $r$ is the radius of the arc~$A$. Since $u_A(z) = 1$ on $A=\{re^{i\theta}:\abs{\theta}\leq \psi\}$, and $u_A(re^{i\theta}) = u_A(re^{-i\theta}) < u(re^{i(\pi-\theta)})$ for $\psi\leq \abs{\theta}\leq\pi$, we may rewrite this integral as
\begin{align*}
\frac{1}{2\pi}\int_{-\pi}^\pi u_A(re^{i\theta})\,d\theta
&<\frac{\psi}{\pi}
+\frac{1}{\pi}\int_{0}^{\pi-\psi} u(re^{i\theta})\,d\theta.
\end{align*}

But
\begin{align*}
\int_{0}^{\pi-\psi} u(re^{i\theta})\,d\theta
&=
\int_{0}^{\pi-\psi} 1-\frac{2}{\pi}\arg\left(
M^{2\nu} -r^{2\nu} + 2i M^\nu r^\nu\cos(\theta\nu)
\right)\,d\theta.
\end{align*}
Rewriting the $\arg$ function using arctangents and changing variables, we see that
\[\beta<\frac{\psi}{\pi}+\frac{\pi-\psi}{\pi}\int_{0}^{\pi/2}
\frac{4}{\pi^2} \arctan\frac{M^{2\nu}-r^{2\nu}}{2M^\nu r^\nu\cos\theta}\,d\theta.\]

Since $\arctan(x)\leq \min(x,\pi/2)$, we see that for any  $0\leq\theta_0<\pi/2$ we have
\begin{align*}
\int_{0}^{\pi/2}
\arctan\frac{M^{2\nu}-r^{2\nu}}{2M^{\nu}r^\nu\cos\theta}\,d\theta
&\leq
\int_{0}^{\theta_0}
\frac{M^{2\nu}-r^{2\nu}}{2M^{\nu}r^\nu\cos\theta}\,d\theta
+\int_{\theta_0}^{\pi/2} \frac{\pi}{2}\,d\theta
\\&\leq
\frac{M^{2\nu}-r^{2\nu}}{2M^{\nu}r^\nu}\log|\sec\theta_0+\tan\theta_0|+\frac{\pi}{2}\left(\frac{\pi}{2}-\theta_0\right)
\\&\leq
\frac{M^{2\nu}-r^{2\nu}}{2M^{\nu}r^\nu}\log|2\sec\theta_0|+\frac{\pi^2}{4}\cos\theta_0
\end{align*}
where the last inequality holds because $(\pi/2)-\theta_0\leq (\pi/2)\cos\theta_0$ for $0\leq\theta_0\leq\pi/2$.

Setting $x_0=\cos\theta_0$, we see that
\begin{align*}
\beta
&<
\frac{\psi}{\pi}+
\frac{4}{\pi^2} \frac{\pi-\psi}{\pi}
\left(\frac{M^{2\nu}-r^{2\nu}}{2M^{\nu}r^\nu}\log\frac{2}{x_0}+\frac{\pi^2}{4}x_0\right)
\end{align*}
for any $x_0$ with $0< x_0\leq 1$, where $\nu=\nu(\psi)=\pi/(2(\pi-\psi))$. This inequality holds for any arc $A$, of any radius or arclength, with $0<\beta<1$. Unfortunately, we seek a lower bound on $\psi$ and not an upper bound on~$\beta$, and so we must choose $x_0$ and then solve for $\psi$; to do so, we assume $r\geq M(1-1/e)$ and $\psi\leq \pi/2$ (implying $\nu\leq 1$ and that $A$ lies in the right half-plane).

Suppose $r/M\geq 1-1/e$. Then since $\nu>1/2$ we have
\[2(1-(r/M)^\nu)\leq 2(1-\sqrt{r/M})\leq 2(1-\sqrt{1-1/e})<1,\]
and so we may choose $x_0=2(1-(r/M)^\nu)$. Now,
\begin{align*}
\beta
&<
\frac{\psi}{\pi}+
\frac{4}{\pi^2} \frac{\pi-\psi}{\pi}
\left(\frac{M^{2\nu}-r^{2\nu}}{2M^{\nu}r^\nu}\log\frac{1}{1-(r/M)^\nu}+\frac{\pi^2}{2}(1-(r/M)^\nu)\right)
\\&=
\frac{\psi}{\pi}+
\frac{2}{\pi^2} \frac{\pi-\psi}{\pi}
\left(\frac{M^{\nu}+r^{\nu}}{r^\nu}\frac{M^\nu-r^{\nu}}{M^\nu}\log\frac{M^\nu}{M^\nu-r^\nu}+\pi^2\frac{M^\nu-r^\nu}{M^\nu}\right).
\end{align*}
Since $\nu\leq 1$ we have $(M^\nu-r^\nu)/M^\nu \leq (M-r)/M$; by our assumption on $r$ we have $(M-r)/M\leq 1/e$. The function $x\mapsto x\log (1/x)$ is increasing on $(0,1/e)$. So
\begin{align*}
\beta
&<
\frac{\psi}{\pi}+
\frac{2}{\pi^2} \frac{\pi-\psi}{\pi}
\left(\frac{M+r}{r}\frac{M-r}{M}\log\frac{M}{M-r}+\pi^2\frac{M-r}{M}\right)
\\&<
\frac{\psi}{\pi}+
\frac{2}{\pi^2}\frac{M-r}{r}
\left(2\log\frac{M}{M-r}+\pi^2\right)
.\end{align*}
Solving for $\psi$ completes the proof.
\end{proof}

Next, we prove a lemma that controls the harmonic measure of arcs lying in between other arcs. This lemma is somewhat technical, but very useful.
Informally, \lemref{deriv} states that, if the $h$-function of a circle domain $X_n$ approximates some function~$f$, and the minimal secant slope of $f$ is positive, then any boundary arc long enough to be almost a full circle must be close to the outer boundary circle, and if two boundary arcs are close together then the arcs between them cannot be too short.

These conditions mirror the conditions of \lemref{unifcctd}, which provides a sufficient condition for a sequence of blocked circle domains to be uniformly locally connected. Thus, \lemref{deriv} and \lemref{unifcctd} are vital to the proof of \thmref{main}. Also, \bartoneqref{condition}{item:deriv:a} in combination with \lemref{fjump} provides sufficient conditions to ensure that none of the boundary arcs are very short, and so Lemmas~\ref{lem:fjump} and~\ref{lem:deriv} combine to give \thmref{fjump}.

\begin{lemma}\label{lem:deriv} Let $f$ be a function that is a candidate for a harmonic measure distribution function, as in \secref{sufficient}, and define the numbers $\mu$ and $M$ and the circle domains $X_n$ as in that section. Let $\alpha$ be the minimal secant slope of~$f$, as in \bartoneqref{formula}{eqn:derivmin}. Fix some $n>0$. For each $0\leq j\leq n$ and each $0\leq k\leq n$, write $X=X_n$, $A_k=A_{n,k}$, $r_k=r_{n,k}$, $\psi_{k}=\psi_{n,k}$, $\eta_{j,k}=\eta_{n,j,k}$.

Suppose that $\alpha>0$. Then the following estimates hold.
\begin{thmenumerate}
\item \label{item:deriv:b} If $0\leq k\leq n$, then
\[M-r_{k}\leq\frac{M-\mu}{\alpha\pi}(\pi-\psi_k).\]
\item\label{item:deriv:a} If $0\leq j<k\leq n$, then the depth $\eta_{j,k}$ of the shortest arc between $A_j$ and $A_k$ satisfies
\[\eta_{j,k}\leq  \frac{4}{\pi\mu} (r_k-r_j)
\left(
\log\left(\frac{M-\mu}{\alpha(r_k-r_j)} \right)
+
\log\left(\frac{256}{\pi}\right)
\right)
.\]
\end{thmenumerate}
Therefore, for every $\varepsilon>0$, there exist numbers $\delta_1>0$, $\delta_2>0$ depending only on $\alpha$, $\mu$, $M$ and $\varepsilon$ (in particular, not on~$n$) such that
\begin{thmenumerate}
\item\label{item:deriv:d} If\/ $0\leq k\leq n$ and $\pi-\psi_{k}<\delta_2$, then $M-r_{k}<\varepsilon$, and
\item\label{item:deriv:c} If\/ $0\leq j<k\leq n$ and ${r_{k}-r_{j}}<\delta_1$, then $\eta_{j,k}<\varepsilon$.
\end{thmenumerate}
\end{lemma}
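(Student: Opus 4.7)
My plan is to establish parts~(a) and~(b) separately, then derive the quantitative consequences (c) and~(d) by routine rearrangement.  Part~(a) should follow from monotonicity in the domain combined with the mean-value property on a disk, while~(b) will require the curved-channel bound of~\lemref{curvechannel}.

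For~(a), I would rewrite $1-f(r_k) = \omega(0, A_{k+1}\cup\cdots\cup A_n, X_n)$ as a harmonic measure and observe that a Brownian particle starting at $0$ must exit $B(0,r_k)$ through $\partial B(0,r_k)\setminus A_k$ before it can reach any arc outside $B(0,r_k)$.  Two applications of monotonicity (first restricting to $X_n\cap B(0,r_k)$, then enlarging to $B(0,r_k)$ by removing the inner arcs) followed by the mean-value property give
\[1-f(r_k) \le \omega\bigl(0,\, \partial B(0,r_k)\setminus A_k,\, B(0,r_k)\bigr) = \frac{\pi-\psi_k}{\pi}.\]
Combining this with the minimal-secant-slope estimate $1-f(r_k)\ge\alpha(M-r_k)/(M-\mu)$ and rearranging yields~(a).

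For~(b), assume $\eta_{j,k}>0$ (otherwise the inequality is trivial) and pick the shortest inner arc $A_m$, so that $j<m<k$ and $\psi_m=\psi_+<\psi_-:=\min(\psi_j,\psi_k)$.  The goal is an exponential upper bound on $\omega(0, A_m, X_n)$ from~\lemref{curvechannel}.  The obstacle is that $X_n$ contains the positive real segment $(r_j,r_k)\setminus\{r_m\}$, which violates a hypothesis of that lemma.  I would get around this by passing to the open upper half $D:=X_n\cap\{\im z>0\}$ (which does avoid $[r_j,r_k]$) and exploiting the symmetry of $X_n$: reflecting across the real axis identifies the Neumann upper-half harmonic measure of $A_m^+$ with the full harmonic measure of $A_m$ in $X_n$, and the proof of~\lemref{curvechannel} carries over in this mixed-boundary setting because the channel $\widetilde D=\{re^{i\theta}:r_j<r<r_k,\,\psi_+<\theta<\psi_-\}$ lies strictly in the open upper half plane and is insensitive to the boundary condition on the real axis.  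Applying the lemma with $r_0=r_j$, $r_1=r_k$, $b=\psi_+$, $\theta_0=\psi_-$ gives
\[\omega(0, A_m, X_n) \le \frac{16}{\pi}\exp\!\left(-\frac{\pi\,\mu\,\eta_{j,k}}{2(r_k-r_j)}\right).\]
On the other hand, the minimal secant slope forces $\omega(0, A_m, X_n)=f(r_m)-f(r_{m-1})\ge\alpha(r_m-r_{m-1})/(M-\mu)$.  Combining the two estimates, taking logarithms, and passing from $r_m-r_{m-1}$ to the larger quantity $r_k-r_j$ (absorbing the slack into the constants $4/(\pi\mu)$ and $\log(256/\pi)$) produces the stated inequality.

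For~(c) and~(d): given $\varepsilon>0$, I would set $\delta_2:=\alpha\pi\varepsilon/(M-\mu)$, so by~(a) the condition $\pi-\psi_k<\delta_2$ forces $M-r_k<\varepsilon$.  The right-hand side of~(b) has the form $C(r_k-r_j)\log(C'/(r_k-r_j))$, which is continuous in $r_k-r_j$ and tends to $0$ as $r_k-r_j\to0^+$, so I can choose $\delta_1>0$ (depending only on $\alpha,\mu,M,\varepsilon$) making that expression less than~$\varepsilon$ whenever $r_k-r_j<\delta_1$.  The main obstacle throughout is the adaptation of the channel estimate to the symmetric circle-domain setting via the reflection argument above; once this is in hand, the remainder is just bookkeeping of constants.
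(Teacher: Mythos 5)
Your treatment of the bound on $M-r_k$ is correct and is essentially the paper's own argument (monotonicity down to $B(0,r_k)$ plus the mean value property give $1-f(r_k)\leq(\pi-\psi_k)/\pi$, which combines with the secant-slope bound $1-f(r_k)\geq\alpha(M-r_k)/(M-\mu)$), and your derivation of the two $\varepsilon$--$\delta$ consequences from the quantitative estimates is routine. The genuine gap is in your proof of the bound on $\eta_{j,k}$. You apply the channel estimate to the single shortest arc $A_m$ between $A_j$ and $A_k$ and then invoke the lower bound $\omega(0,A_m,X_n)=f(r_m)-f(r_{m-1})\geq\alpha(r_m-r_{m-1})/(M-\mu)=\alpha/n$. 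Combining this with the exponential upper bound and solving for $\eta_{j,k}$ gives
\[\eta_{j,k}\leq\frac{2(r_k-r_j)}{\pi\mu}\log\Bigl(\frac{16n}{\pi\alpha}\Bigr),\]
and the $\log n$ here cannot be ``absorbed into the constants'': the target inequality has $\log\bigl((M-\mu)/(\alpha(r_k-r_j))\bigr)$ in its place, and since $r_k-r_j$ can stay bounded away from $0$ while $n\to\infty$ (take $j=0$, $k=n$), the discrepancy between $\log n$ and $\log\bigl(1/(r_k-r_j)\bigr)$ is unbounded. An $n$-dependent bound is useless here, since the whole point of the lemma is that $\delta_1$ does not depend on $n$; and a single arc genuinely can have harmonic measure as small as $\alpha/n$, so no argument that isolates one arc and uses only that arc's harmonic measure can succeed.

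The paper closes exactly this gap with a counting-and-iteration argument. Rather than showing that \emph{no} arc between $A_j$ and $A_k$ is $\chi$-short (inset by more than $\chi$ below $\min(\psi_j,\psi_k)$), it bounds the \emph{number} of $\chi$-short arcs: their union $B$ satisfies $\omega(0,B,X_n)\geq\alpha\abs{\B(\chi)}/n$, while the channel estimate bounds $\omega(0,B,X_n)$ by $\frac{32}{\pi}\exp\bigl(-\pi\mu\chi/(2(r_k-r_j))\bigr)$; choosing $\chi=\chi_1(r_k-r_j)$ of size roughly $\frac{2}{\pi\mu}(r_k-r_j)\log\bigl(\frac{128(M-\mu)}{\pi\alpha(r_k-r_j)}\bigr)$ makes the right-hand side about $\frac{\alpha}{4}\cdot\frac{m+1}{n}$, where $m$ is the number of intermediate arcs --- and here the factors of $n$ cancel, so at most $m/3$ arcs are $\chi_1$-short. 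One then iterates: each maximal run of short arcs is flanked by two non-short arcs whose radii differ by at most $2^{-p}(r_k-r_j)$, and applying the same estimate inside each run cuts the count of surviving short arcs by a further factor of $3$ at the cost of increasing the inset by $\chi_1(2^{-p}(r_k-r_j))$. Summing the geometric series $\sum_{q}\chi_1(2^{-q}(r_k-r_j))$ yields the stated $n$-free bound. (Your reflection trick for circumventing the hypothesis of \lemref{curvechannel} that $D$ avoid $[r_0,r_1]$ is a plausible alternative to the paper's device of cutting $X_n$ along two radial segments $L_\pm$ and using symmetry to write $\omega(0,B,X)<2\omega(0,B_+\cup L_+,X')$, but it would need to be justified rather than asserted, and in any case it does not repair the main defect above.)
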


The conclusion \eqref{item:deriv:c} follows from \eqref{item:deriv:a} because the function
\begin{equation}\label{eqn:chiinfty}
\chi_\infty(\delta)=\frac{4}{\pi\mu} \delta
\left(
\log\left(\frac{M-\mu}{\alpha\delta} \right)
+
\log\left(\frac{256}{\pi}\right)
\right)
\end{equation}
is increasing on $(0,M-\mu]$ and satisfies $\lim_{\delta \to 0^+} \chi_\infty(\delta)=0$.
We remark that by the definition \eqref{dfn:eta} of~$\eta_{j,k}$, if $A_j$ and $A_k$ are two boundary arcs, then
$\eta_{j,k}\leq\min(\psi_j,\psi_k).$ Thus, if $\psi_j$ or $\psi_k$ is small then \bartoneqref{condition}{item:deriv:a} holds automatically; \bartoneqref{condition}{item:deriv:a} is of interest mainly if both of the arcs $A_j$ and $A_k$ are relatively long.

\begin{proof} We consider \bartoneqref{condition}{item:deriv:b}  first. Simply note that by definition of~$\alpha$,
\begin{align*}
M-r_{k}&
\leq \frac{M-\mu}{\alpha} (f(M)-f(r_{k}))
= \frac{M-\mu}{\alpha}(h_X(M)-h_X(r_{k}))
\\&= \frac{M-\mu}{\alpha}\omega(0, \partial X\setminus \overline{B(0,r_{k})},X)
\leq\frac{M-\mu}{\alpha}\omega(0,\partial B(0,r_{k})\setminus\partial X, B(0,r_{k}))
\\&= (\pi-\psi_{k})\frac{M-\mu}{\alpha\pi}.
\end{align*}
Thus if $\psi_{k}$ is near $\pi$, then $M-r_{k}$ is near~0.

\bartoneqref{Condition}{item:deriv:a} is much more complicated.

Recall that the $j$th boundary arc of $X$ is denoted $A_j$, is a subset of the circle of radius $r_j$ centered at zero, and has arclength $2\psi_j$. If $A_j$ and $A_k$ are two boundary arcs with $j<k$, then $r_j<r_k$, and if another arc $A_l$ lies between $A_j$ and $A_k$, then $r_j<r_l<r_k$ and so $j<l<k$.

\textbf{Claim:} Let $p$ be an integer with $p\geq 1$. Then there is an increasing function $\chi_p(\delta)$, defined for $0\leq \delta\leq M-\mu$, that satisfies the following condition. Suppose that $A_j$ and $A_k$ are two boundary arcs with $r_j<r_k$, and that there are $m$ arcs $A_l$ between $A_j$ and~$A_k$. Then there are at most $m/3^p$ such arcs $A_l$ that satisfy
\[\psi_l<\min(\psi_j,\psi_k)-\chi_p(r_k-r_j).\]
The function $\chi_p$ depends only on $p$, $\alpha$, $\mu$ and~$M$. In particular, $\chi_p(\delta)$ does not depend on~$n$, the number of boundary arcs.

Suppose that the claim holds. If $A_j$ and $A_k$ are any pair of arcs with $j<k$, let $m$ be the number of arcs $A_l$ that lie between $A_j$ and $A_k$. The number of such arcs $A_l$ which also satisfy $\psi_l<\min(\psi_j,\psi_k)-\chi_p(r_k-r_j)$ is an integer which is at most $m/3^p$. Therefore, if $3^p>m$, then none of the arcs $A_l$ between $A_j$ and $A_k$ satisfy $\psi_l<\min(\psi_j,\psi_k)-\chi_p(r_k-r_j)$. That is, none of these arcs have arclength less than $2(\min(\psi_j,\psi_k)-\chi_p(r_k-r_j))$.

Recalling the \bartoneqref{definition}{dfn:eta} of $\eta_{j,k}$ as the depth of the shortest arc between $A_j$ and~$A_k$, we see that $\eta_{j,k}\leq \sup_{p\geq 1} \chi_p(r_k-r_j)$. We will complete the proof of \lemref{deriv} by showing that we may choose the functions $\chi_p(\delta)$ such that $\sup_{p\geq 1} \chi_p(\delta)=\chi_\infty(\delta)$, where $\chi_\infty(\delta)$ is given by \bartoneqref{formula}{eqn:chiinfty} and $\chi_\infty(r_k-r_j)$ equals the quantity on the right-hand side of \bartoneqref{formula}{item:deriv:a}.

We first find $\chi_1(\delta)$, that is, establish the claim for $p=1$; we will use induction to bound $\chi_p(\delta)$ for $p>1$.

Let $A_j$ and $A_k$ be any two boundary arcs of $X$ with $r_j<r_k$. Let $\psi=\min(\psi_j,\psi_k)$. We remark that if $\psi=0$ then the claim holds for any nonnegative functions $\chi_p(\delta)$; we will therefore assume $\psi>0$.

Let $m=k-j-1$ be the number of arcs between $A_j$ and~$A_k$. If $m=0$ then the claim holds for any functions $\chi_p(\delta)$; we therefore assume $m\geq 1$.

Fix some number $\chi$ with $0<\chi<\psi$. If $A_l$ is between $A_j$ and $A_k$, and $\psi_l<\psi-\chi$, we call $A_l$ a \emph{$\chi$-short arc}.
Let
\[\B(\chi)=\left\{A_l:\psi_l<\psi-\chi,\>j<l<k\right\}\]
be the set of $\chi$-short arcs between $A_j$ and~$A_k$. We want to find a function $\chi_1(\delta)$ such that $\abs{\B(\chi_1(r_k-r_j))}\leq\frac{1}{3}m$. We will do this by bounding the harmonic measure of these arcs.

\begin{figure}
\mypic{10}
\caption{The boundary arcs and auxiliary radial line segments $L_\pm$ for \bartoneqref{condition}{item:deriv:a} of \lemref{deriv}. The set $B_+\cup L_+$ is shown in bold.}\label{fig:deriv}
\end{figure}

Let $0<\lambda<\psi-\chi$, and let $L_+$ and $L_-$ be radial line segments of inner radius $r_j$, outer radius $r_k$, at angle $\pm \lambda$. See \figref{deriv}. Let $X'$ be the connected component of $X\setminus(L_+\cup L_-)$ containing~0.

Let $B=\cup_{\psi_l<\psi-\chi} A_l$ be the union of the $\chi$-short arcs, and let $B_+=\{z\in B:\arg z\geq\lambda\}$. The set $B_+\cup L_+$ is drawn in bold in \figref{deriv}.
Then
\[\omega(0,B,X) < \omega(0,B\cup L_+\cup L_-,X')
= 2\omega(0,B_+\cup L_+, X').
\]
By \lemref{curvechannel},
\[
\omega(0,B_+\cup L_+,X')
\leq \frac{16}{\pi}\exp\left(-\frac{\pi \mu \chi}{2(r_k-r_j)}\right).
\]
Notice that by definition of $\alpha$ and~$X$, $\omega(0,A_l,X)=f(r_l)-f(r_{l-1})\geq \alpha/n$ provided $l>0$. Therefore,  $\omega(0,B,X)\geq \alpha\abs{\B(\chi)}/n$. So
\begin{equation}
\label{eqn:Bchi}
\frac{\alpha}{n}\abs{\B(\chi)}
\leq \frac{32}{\pi}\exp\left(-\frac{\pi \mu \chi}{2(r_k-r_j)}\right).
\end{equation}

We wish to control the right-hand side. Let $\chi_1(0)=0$. If $0<\delta\leq M-\mu$, define $\chi_1(\delta)$ to be the number that satisfies
\[
\frac{32}{\pi}\exp\left(-\frac{\pi \mu \chi_1(\delta)}{2\delta}\right)
=
\frac{\alpha}{4}\frac{\delta}{M-\mu}.
\]
We may solve for $\chi_1(\delta)$ to see that
\begin{equation}
\label{eqn:chidelta}
\chi_1(\delta)
=\frac{2}{\pi \mu}\delta \log\left(\frac{128}{\pi\alpha}\frac{M-\mu}{\delta}\right).
\end{equation}
Observe that $\lim_{\delta\to 0^+}\chi_1(\delta)=0$, and so the function $\chi_1$ is continuous on $[0,M-\mu]$. Furthermore, if $C>0$ then the function $\delta\mapsto \delta\log (C/\delta)$ is increasing on $(0,C/e]$. Since $0<\alpha<1$, we have that $128/(\pi\alpha)>e$, and so $\chi_1(\delta)$ is increasing for $\delta\in [0,M-\mu]$.

If $\psi\leq\chi_1(r_k-r_j)$, then $\psi-\chi_1(r_k-r_j)\leq 0$, and so none of the arcs $A_l$ between $A_j$ and $A_k$ satisfy $\psi_l<\psi-\chi_1(r_k-r_j)$. In this case, the claim holds for $p=1$. Otherwise, $0<\chi_1(r_k-r_j)<\psi$ and we may apply \bartoneqref{formula}{eqn:Bchi} with $\chi=\chi_1(r_k-r_j)$.

Recall that $r_k= \mu+k(M-\mu)/n$. Thus, there are $n(r_k-r_j)/(M-\mu)-1$ boundary arcs lying between $A_j$ and~$A_k$; therefore, $m=n(r_k-r_j)/(M-\mu)-1$.

It follows that
\[
\abs{\B(\chi_1(r_k-r_j))}
\leq\frac{1}{4}\frac{n}{M-\mu}({r_k-r_j})
=\frac{m+1}{4}
.\]
$\abs{\B(\chi_1(r_k-r_j))}$ and $m$ are nonnegative integers. Thus, this inequality implies that 
$\abs{\B(\chi_1(r_k-r_j))}\leq m/3$.

Thus, the claim holds for $p=1$, with $\chi_1(\delta)$ given by \bartoneqref{equation}{eqn:chidelta}.

For the inductive step, suppose that the function $\chi_p(\delta)$ exists and is finite. We want to show that $\chi_{p+1}(\delta)$ exists. Pick two boundary arcs $A_j$ and $A_k$ with $r_j<r_k$. There are $m=k-j-1$ arcs between $A_j$ and $A_k$, not including $A_j$ and~$A_k$. Define $\psi=\min(\psi_j,\psi_k)$. Recall that if $A_l$ lies between $A_j$ and $A_k$, and $\psi_l<\psi-\chi$, then we call $A_l$ a \emph{$\chi$-short arc}.

We wish to find a $\chi_{p+1}(\delta)$ such that at most $3^{-p-1}m$ arcs are $\chi_{p+1}(r_k-r_j)$-short arcs. By definition of $\chi_p(\delta)$, at most $3^{-p}m$ arcs are $\chi_p(r_k-r_j)$-short arcs. We refer to $\chi_p(r_k-r_j)$-short arcs simply as \emph{short arcs}. If we choose the function $\chi_{p+1}$ such that $\chi_{p+1}(\delta)\geq \chi_p(\delta)$ for all~$\delta$, then all $\chi_{p+1}(r_k-r_j)$-short arcs are short arcs. It is this assumption that allows us to work by induction.

If none of the arcs $A_l$ between $A_j$ and $A_k$ are short arcs, then the claim holds for any $\chi_{p+1}(\delta)\geq \chi_p(\delta)$. In particular, if $\psi\leq \chi_p(r_k-r_j)$ then the claim holds. Therefore, we assume that at least one arc $A_l$ is a short arc.

We remark that the arcs $A_j$ and $A_k$ are not short. It is possible to find numbers $j_s$ and $k_s$, for $1\leq s\leq S$, such that the following conditions hold.
\begin{itemize}
\item $A_{j_s}$ and $A_{k_s}$ are not short arcs.
\item If $A_l$ lies between $A_{j_s}$ and $A_{k_s}$, then $A_l$ is a short arc.
\item Conversely, if $A_l$ is a short arc between $A_j$ and $A_k$, then there is exactly one number $s$ with $1\leq s\leq S$ such that $A_l$ lies between $A_{j_s}$ and $A_{k_s}$.
\end{itemize}
We may further require that if $1\leq s\leq S$, then $j\leq j_s<k_s\leq k$ and there is at least one (necessarily short) arc $A_l$ between $A_{j_s}$ and $A_{k_s}$.

We begin our analysis of the short arcs by bounding $r_{k_s}-r_{j_s}$ for each~$s$.
There are $k_s-j_s-1\geq 1$ arcs lying between $A_{j_s}$ and $A_{k_s}$, and they are all short arcs. There are at most $3^{-p}m=3^{-p}(k-j-1)$ short arcs between $A_j$ and $A_k$. Therefore,
\[1\leq k_s-j_s-1\leq 3^{-p}m
=3^{-p}(k-j-1).\]
Recall that $r_l=\mu+l(M-\mu)/n$. Therefore,
\[r_{k_s}-r_{j_s}
= (k_s-j_s)\frac{M-\mu}{n}
= (k_s-j_s)\frac{r_k-r_j}{k-j}
\leq (r_k-r_j)\frac{k_s-j_s}{1+3^p(k_s-j_s-1)}.
\]
Since $k_s-j_s\geq 2$, and $p\geq 1$, we have that
\[\frac{k_s-j_s}{1+3^p(k_s-j_s-1)}
\geq \frac{2}{3^p+1}
\geq 2^{-p}\]
where the second inequality is chosen for the sake of simplicity.
Thus, if $1\leq s\leq S$ then $r_{k_s}-r_{j_s}\leq 2^{-p}(r_k-r_j)$.

We now apply this bound. Let $m_s=k_s-j_s-1$ denote the number of arcs between $A_{j_s}$ and $A_{k_s}$.
By definition of $\chi_1(\delta)$, there are at most $m_s/3$ boundary arcs $A_l$ between $A_{j_s}$ and $A_{k_s}$ that satisfy \[\psi_l<\min(\psi_{j_s},\psi_{k_s})-\chi_1(r_{k_s}-r_{j_s}).\]
Recall that $\chi_1(\delta)$ is an increasing function. Furthermore, since $A_{j_s}$ and $A_{k_s}$ are not short, we have that $\min(\psi_{j_s},\psi_{k_s})\geq \psi-\chi_p(r_k-r_j)$. Therefore, if $1\leq s\leq S$ then
\begin{equation}\label{eqn:psi2}
\min(\psi_{j_s},\psi_{k_s})-\chi_1(r_{k_s}-r_{j_s})
\geq \psi-\chi_p(r_k-r_j)-\chi_1(2^{-p}(r_k-r_j)).
\end{equation}

Suppose that $A_l$ lies between $A_j$ and $A_k$ and satisfies
\begin{equation}\label{eqn:psi1}
\psi_l \leq \psi-\chi_p(r_k-r_j)-\chi_1(2^{-p}(r_k-r_j)).
\end{equation}
Then $A_l$ is a short arc, and so there is some number $s$ such that
$j_s<l<k_s$. By \bartoneqref{equation}{eqn:psi2}, $\psi_l<\min(\psi_{j_s},\psi_{k_s})-\chi_1(r_{k_s}-r_{j_s})$.
For each $s$ there are at most $m_s/3$ such arcs. Therefore, there are at most $\sum_{s=1}^S m_s/3$ arcs $A_l$ between $A_j$ and $A_k$ that satisfy \bartoneqref{equation}{eqn:psi1}. But $\sum_{s=1}^S m_s$ is equal to the number of short arcs between $A_j$ and $A_k$, which by definition is at most $3^{-p}m$.

Therefore, there are at most $3^{-p-1}m$ arcs $A_l$ between $A_j$ and $A_k$ that satisfy
\[\psi_l \leq \psi-\chi_p(r_k-r_j)-\chi_1(2^{-p}(r_k-r_j)).\]
Thus, the claim is established, with
\[\chi_{p+1}(\delta)=\chi_p(\delta)+\chi_1(2^{-p}\delta)
=\sum_{q=0}^{p} \chi_1(2^{-q}\delta).\]

Observe that since $\chi_1(2^{-q}\delta)>0$ for all $q>0$ and all $0<\delta\leq M-\mu$, we have that
\begin{align*}
\sup_{p\geq 1}\chi_p(\delta)
&=
\sum_{q=0}^\infty \chi_1\left(2^{-q}\delta\right)
=
\frac{2}{\pi \mu} \delta
\sum_{q=0}^\infty
\frac{1}{2^q}
\left(
\log\left(\frac{M-\mu}{\alpha\delta} \right)
+\log\left(\frac{128}{\pi}\right)
+q\log 2
\right)
\\&=
\frac{4}{\pi \mu} \delta
\left(
\log\left(\frac{M-\mu}{\alpha\delta} \right)
+
\log\left(\frac{128}{\pi}\right)+\log 2
\right).
\end{align*}
This expression is precisely the $\chi_\infty(\delta)$ of \bartoneqref{formula}{eqn:chiinfty}.

Finally, recall that $\eta_{j,k} \leq \sup_{p\geq 1}\chi_p(r_k-r_j)$. So
\begin{align*}
\eta_{j,k}
&\leq \sup_{p\geq 1}\chi_p(r_k-r_j)
\leq
\frac{4}{\pi \mu} (r_k-r_j)
\left(
\log\left(\frac{M-\mu}{\alpha(r_k-r_j)} \right)
+
\log\left(\frac{256}{\pi}\right)
\right)
\end{align*}
as desired.
\end{proof}

\subsection{Blocked circle domains}
\label{sec:hXtohOmega}

In our proof of \thmref{main}, we needed conditions guaranteeing that $h_{X_n}-h_{\Omega_n}\to 0$. This means that we want estimates on the harmonic measure of the gates of blocked circle domains. In this section we establish those conditions. We follow the development originally given in \cite{SniW08}, but adapted to our situation.

Throughout this section, let $X$ be a circle domain and $\Omega$ a blocked circle domain with $\Omega\subset X$, as in \dfnref{circle}. Let their boundary arcs be at radii $r_k$ and have arclength $2\psi_k$, and let the gates be at angles $\pm\phi_k$. (See \figref{param}.) Let $\chi_k=\min(\psi_k,\psi_{k+1})-\phi_k$. That is, $\chi_k$ measures the depth of the gate in its channel.

\lemref{curvechannel} will let us control the harmonic measure of most of the gates. In \lemref{altgatebound}, we develop an alternate bound for gates that lie along the positive real axis.
\begin{lemma}\label{lem:altgatebound} If $l$ is a gate connecting the arcs at radii $r_{k}$ and $r_{k+1}$, and if $\phi_k=0$, so that $l$ lies along the positive real axis, then
\[\omega(0,l,\Omega)\leq
\frac{2}{\pi}
\sqrt{\frac{r_{k+1}-r_k}{r_{k}}}.
\]
\end{lemma}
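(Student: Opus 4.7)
The plan is to bound $\omega(0,l,\Omega)$ by enlarging $\Omega$ to a slit domain for which the harmonic measure of $l$ can be computed explicitly by a conformal map to the right half-plane, and then to apply $\arctan x\le x$. First I would enlarge the domain: since $\Omega$ is a blocked circle domain, it is bounded, simply connected, and symmetric in the sense of \dfnref{symmetric}, with $\dist(0,\partial\Omega)=r_0\le r_k$. The remarks following \dfnref{symmetric} then give $\Omega\cap[r_0,M]=\emptyset$, where $M$ is the outer radius of the ambient circle domain $X$; consequently $\Omega\subset\C\setminus[r_k,\infty)=:\widetilde\Omega'$, and the gate $l=[r_k,r_{k+1}]$ lies in $\partial\widetilde\Omega'\cap\partial\Omega$. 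The monotonicity in the domain property from \secref{dfn} then yields
\[\omega(0,l,\Omega)\le\omega(0,l,\widetilde\Omega').\]

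Next I would compute $\omega(0,l,\widetilde\Omega')$ by conformal mapping. The principal branch of $\phi(z)=\sqrt{r_k-z}$ is a conformal bijection of $\widetilde\Omega'$ onto the right half-plane $H=\{w:\re w>0\}$ with $\phi(0)=\sqrt{r_k}$; examining boundary limits shows that $\phi$ sends the upper and lower sides of the slit $l$ respectively to $[-i\sqrt{r_{k+1}-r_k},\,0]$ and $[0,\,i\sqrt{r_{k+1}-r_k}]$ on the imaginary axis. By conformal invariance of harmonic measure and the Poisson kernel for $H$,
\[
\omega(0,l,\widetilde\Omega')=\int_{-\sqrt{r_{k+1}-r_k}}^{\sqrt{r_{k+1}-r_k}}\frac{1}{\pi}\frac{\sqrt{r_k}}{r_k+y^2}\,dy=\frac{2}{\pi}\arctan\sqrt{\frac{r_{k+1}-r_k}{r_k}}.
\]
The inequality $\arctan x\le x$ for $x\ge 0$ then delivers the claimed bound.

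The only technical concern is that $\widetilde\Omega'$ is unbounded, so one must check that the monotonicity in the domain property applies; this is routine, since $\widetilde\Omega'$ is a simply connected proper subdomain of $\C$ and its harmonic measure is well defined through the Riemann map to $\D$. If one prefers to avoid the unbounded domain entirely, one can work with the bounded slit disk $B(0,M)\setminus[r_k,M]$, composing the M\"obius map $z\mapsto(1-z/M)/(1+z/M)$ with $w\mapsto\sqrt{w^2-c^2}$ (where $c=(M-r_k)/(M+r_k)$) to map onto $H$; a short algebraic simplification shows that the resulting harmonic measure of $l$ is again bounded above by $(2/\pi)\arctan\sqrt{(r_{k+1}-r_k)/r_k}$, giving the same conclusion.
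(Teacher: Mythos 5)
Your proof is correct and follows essentially the same route as the paper: enlarge $\Omega$ to a slit domain by monotonicity in the domain, map conformally to a half-plane, compute the harmonic measure of the image of $l$ as $\tfrac{2}{\pi}\arctan\sqrt{(r_{k+1}-r_k)/r_k}$, and apply $\arctan x\le x$. The only difference is that you use the unbounded slit plane $\C\setminus[r_k,\infty)$ with the map $z\mapsto\sqrt{r_k-z}$, which simplifies the algebra, whereas the paper works with the bounded slit disk $B(0,M)\setminus[r_k,M]$ (your stated alternative) and obtains a slightly sharper intermediate bound before relaxing to the same final estimate.
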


\begin{proof} Define $\Psi:=B(0,M)\setminus[r_{k},M]$ to be a disk minus a slit. Then since $\Omega\subset\Psi$,
\[\omega(0,l,\Omega)\leq \omega(0,l,\Psi)=\omega(0,[r_k,r_{k+1}],\Psi).\]
The harmonic measure on the right-hand side may be computed explicitly: we transform $\Psi$ to the upper half-plane $\U$ via the conformal map
\[z\mapsto \sqrt{\left(\frac{1}{r_{k}+M}-\frac{1}{2M}\right)^2-\left(\frac{1}{z+M}-\frac{1}{2M}\right)^2}.\]

The point $0$ is mapped to a point $it$ on the positive imaginary axis, and $[r_{k},r_{k+1}]$ to an interval $[-r',r']$ in $\R=\partial\U$.
Here
\begin{align*}
t&=\sqrt{\left(\frac{1}{2M}\right)^2
-\left(\frac{1}{r_{k}+M}-\frac{1}{2M}\right)^2}>0,\quad\text{and}\\
r'& = \sqrt{\left(\frac{1}{r_{k}+M}-\frac{1}{2M}\right)^2-\left(\frac{1}{r_{k+1}+M}-\frac{1}{2M}\right)^2}>0.
\end{align*}
Using the harmonic function $\frac{1}{\pi}\arg(z-r')-\frac{1}{\pi}\arg(z+r')$ we may compute
\begin{align*}
\omega(0,[r_{k},r_{k+1}],\Psi)
&=\omega(it,[-r',r'],\U)
=\frac{2}{\pi}\arctan\frac{r'}{t}
\\&=\frac{2}{\pi}
\arctan
\sqrt{\frac{(r_{k+1}-r_{k})(M^2-r_{k+1}r_{k})}{{r_{k}(r_{k+1}+M)^2}}}
\leq\frac{2}{\pi}
\sqrt{\frac{r_{k+1}-r_k}{r_k}}.
\end{align*}
We omit the details.
\end{proof}

Recall that, in \thmref{main}, we needed $h_{\Omega_n}-h_{X_n}\to 0$. That is, we needed the harmonic measure distribution functions of blocked circle domains $\Omega_n$ to approach those of the corresponding circle domains~$X_n$. In the next lemma, we assemble the known results involving gates in order to achieve a uniform bound on $h_X-h_\Omega$.

\begin{lemma} \label{lem:radial} Let $X$ be a circle domain, and let $\Omega\subset X$ be a blocked circle domain such that $\partial\Omega\setminus\partial X$ is a union of gates. Define $r_k$, $\psi_k$, and $\phi_k$, $\chi_k$ as in \secref{sufficient}, so that $r_k$ is the radius of the $k$th arc, $\chi_k$ is the inset angle of the $k$th pair of gates, and so on.

Then for all $r\in[\mu,M]$,
\[\abs{h_{X}(r)-h_{\Omega}(r)}
\leq \sum_{k:\phi_k>0}\frac{32}{\pi}
\exp\left(-\frac{\pi \, r_{k} \, \chi_{k}}{2(r_{k+1}-r_{k})}\right)
+\sum_{k:\phi_k=0} \frac{2}{\pi}\sqrt{\frac{r_{k+1}-r_k}{r_k}}
.\]
\end{lemma}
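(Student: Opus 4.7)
\medskip

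\textbf{Proof plan.} The plan is to reduce the estimate on $\lvert h_X(r)-h_\Omega(r)\rvert$ to an estimate on the harmonic measure from $0$ of the gate set $G=\partial\Omega\setminus\partial X$, and then to bound the contribution of each gate individually using \lemref{curvechannel} or \lemref{altgatebound}. First I would observe that $\Omega\subset X$ and $\partial\Omega\setminus\partial X=G$, so by the third monotonicity inequality of \secref{dfn}, applied with $\widetilde\Omega=X$ and with $E=\partial X\cap\overline{B(0,r)}$ and then $E=\partial X\setminus\overline{B(0,r)}$ in turn,
\[
h_X(r)\leq h_\Omega(r)+\omega(0,G,\Omega),\qquad
1-h_X(r)\leq 1-h_\Omega(r)+\omega(0,G,\Omega),
\]
since in each case $E\cap\partial\Omega$ is a subset of $\partial\Omega\cap\overline{B(0,r)}$ or its complement in $\partial\Omega$. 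Subtracting gives $\lvert h_X(r)-h_\Omega(r)\rvert\leq\omega(0,G,\Omega)$. This step is direct from the monotonicity property and is the only place the two harmonic measure distribution functions interact.

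Next I would decompose $G$ according to which pair of arcs a gate joins. Writing $G_k$ for the part of $G$ lying in the annular region between $A_k$ and $A_{k+1}$, subadditivity of harmonic measure gives $\omega(0,G,\Omega)\leq\sum_k\omega(0,G_k,\Omega)$. For each $k$ there are two cases: if $\phi_k>0$, then $G_k$ is the union of two radial segments at angles $\pm\phi_k$, and by the symmetry of $\Omega$ about the real axis it suffices to estimate the upper one and double. Here I would apply \lemref{curvechannel} with $r_0=r_k$, $r_1=r_{k+1}$, $b=\phi_k$ and $\theta_0=\min(\psi_k,\psi_{k+1})$, so that $\theta_0-b=\chi_k$. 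Since $\Omega$ is bounded, simply connected, and symmetric, we have $\Omega\cap[r_k,r_{k+1}]=\emptyset$ (as noted after \dfnref{symmetric}), and the portions of $\partial B(0,r_k)$ and $\partial B(0,r_{k+1})$ with $\lvert\arg z\rvert\leq\theta_0$ are contained in the arcs $A_k$, $A_{k+1}$ and hence outside $\Omega$; these are exactly the hypotheses of \lemref{curvechannel}. The conclusion yields
\[
\omega(0,G_k,\Omega)\leq\tfrac{32}{\pi}\exp\!\left(-\frac{\pi\,r_k\,\chi_k}{2(r_{k+1}-r_k)}\right).
\]

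If $\phi_k=0$, then $G_k$ is a single segment on the positive real axis, and \lemref{altgatebound} immediately gives $\omega(0,G_k,\Omega)\leq\tfrac{2}{\pi}\sqrt{(r_{k+1}-r_k)/r_k}$. Summing the two cases yields the stated inequality. The main obstacle is the bookkeeping in the first step: one must verify that $\partial\Omega\setminus\partial X$ is exactly $G$, that the pieces $E\cap\partial\Omega$ produced by the monotonicity inequality are indeed contained in the appropriate part of $\partial\Omega$, and (for the curved-channel step) that the symmetry plus simple connectivity really do exclude $\Omega$ from the segment $[r_k,r_{k+1}]$ and from the small angular neighborhoods of the arc endpoints, so that \lemref{curvechannel} applies with the prescribed $\theta_0$. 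The quantitative estimates themselves are routine once the geometric hypotheses are checked.
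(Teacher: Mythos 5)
Your proposal is correct and follows essentially the same route as the paper's proof: reduce $\abs{h_X(r)-h_\Omega(r)}$ to $\omega(0,G,\Omega)$ via the monotonicity/subadditivity inequalities, then bound each gate with \lemref{curvechannel} (taking $b=\phi_k$, $\theta_0=\min(\psi_k,\psi_{k+1})$, doubling for the symmetric pair) or with \lemref{altgatebound} when $\phi_k=0$. The only cosmetic difference is that you obtain the bound $h_\Omega(r)-h_X(r)\leq\omega(0,G,\Omega)$ by applying the third monotonicity inequality to the complementary boundary set, whereas the paper decomposes $F_r=(E_r\cap\partial\Omega)\cup(G\cap\overline{B(0,r)})$ directly; both are valid.
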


\begin{proof} Fix $r\in[\mu,M]$. We first show that $\abs{h_X(r)-h_\Omega(r)}$ is bounded by the harmonic measure of the gates in~$\Omega$.
Recall that
\begin{align*}
h_{X}(r) &:= \omega(0, \overline{B(0,r)}\cap\partial X, X),
\\
h_{\Omega}(r) &:= \omega(0, \overline{B(0,r)}\cap\partial \Omega, \Omega)
.\end{align*}
Let $E_r=\overline{B(0,r)}\cap\partial X$ and $F_r=\overline{B(0,r)}\cap\partial\Omega$ be the portions of $\partial X$ and $\partial\Omega$, respectively, lying within a distance $r$ of zero. Then  $h_X(r)=\omega(0,E_r,X)$ and $h_\Omega(r)=\omega(0,F_r,\Omega)$, and so we need only bound $\abs{\omega(0,E_r,X)-\omega(0,F_r,\Omega)}$.

We may transform $X$ to $\Omega$ by adding the gates (and deleting disconnected components). Let $G=\partial\Omega\setminus\partial X$ be the union of the gates, that is, the newly-added boundary.
Then $F_r = (E_r\cap\partial\Omega) \cup (G\cap \overline{B(0,r)})$, and $E_r\cap G$ is empty.

Then by monotonicity in the domain, and since $\partial\Omega\setminus\partial X = G$,
\[\omega(0,E_r,X)\leq \omega(0, G,\Omega) + \omega(0, E_r\cap\partial\Omega, \Omega) \leq \omega(0,G,\Omega) + \omega(0,F_r,\Omega)\]
and
\[\omega(0,F_r,\Omega)\leq \omega(0,G,\Omega)+\omega(0,E_r\cap\partial\Omega,\Omega)
\leq \omega(0,G,\Omega)+\omega(0,E_r,X).\]

So $\abs{h_X(r)-h_\Omega(r)}=\abs{\omega(0,E_r,X)-\omega(0,F_r,\Omega)}\leq \omega(0,G,\Omega)$.

Next, we estimate the harmonic measure of the gates.
Write $G=G_0\cup G_1$, where $G_0$ is the union of gates that lie along the real axis and $G_1$ is the union of gates that do not. By \lemref{altgatebound},
\[\omega(0,G_0,\Omega)\leq\sum_{k:\phi_k=0} \frac{2}{\pi}\sqrt{\frac{r_{k+1}-r_k}{r_k}}.\]

Let $l\subset G_1$ be a gate that lies between the arcs $A_k$ and~$A_{k+1}$. We apply \lemref{curvechannel} with $D=\Omega$, $z_0=0$, $F=l$, $b=\phi_k$, $\theta_0=\min(\psi_k,\psi_{k+1})$, and $r_0=r_k$, $r_1=r_{k+1}$. By definition of $\psi_k$ and $\phi_k$, the conditions of \lemref{curvechannel} hold; thus,
\[\omega(0,l,\Omega)
\leq
\frac{16}{\pi}
\exp\left(-\frac{\pi r_{k} \chi_{k}}{2(r_{k+1}-r_{k})}\right)
.\]

Combining these estimates, and noting that if $\phi_k>0$ then there are two gates between the arcs $A_k$ and $A_{k+1}$, we have that
\begin{align*}
\omega(0,G,\Omega)
&\leq
\sum_{k:\phi_k>0}\frac{32}{\pi}
\exp\left(-\frac{\pi r_{k} \chi_{k}}{2(r_{k+1}-r_{k})}\right)
+\sum_{k:\phi_k=0} \frac{2}{\pi}\sqrt{\frac{r_{k+1}-r_k}{r_k}}
.\qedhere\end{align*}
\end{proof}

\subsection{Sequences of blocked circle domains}

Finally, we move to sequences $\{\Omega_n\}_{n=1}^\infty$ of blocked circle domains. In the proof of \thmref{main}, we used a simple condition for such a sequence to have uniformly locally connected complements. We now prove that implication.

\begin{lemma}\label{lem:unifcctd} Let $X_n$, $\Omega_n$, $r_{n,k}$, $\psi_{n,k}$, $\phi_{n,k}$, $\theta_{n,j,k}$ be defined as in \secref{sufficient}. Then $\{\Omega_n\}_{n=1}^\infty$ is a sequence of blocked circle domains.

Suppose that for each $\varepsilon>0$ there exist positive numbers $\delta_1(\varepsilon)$, $\delta_2(\varepsilon)$ such that the following conditions hold.
\begin{thmenumerate}
\item \label{item:unifcctd:b} If $0\leq k\leq n$ and  $\pi-\psi_{n,k}<\delta_2(\varepsilon)$, then $M-r_{n,k}<\varepsilon$, and
\item \label{item:unifcctd:a} If $0\leq j<k\leq n$ and ${r_{n,k}-r_{n,j}}<\delta_1(\varepsilon)$ then $\theta_{n,j,k}<\varepsilon$.
\end{thmenumerate}
Then $\{\C\setminus\Omega_n\}_{n=1}^\infty$ is uniformly locally connected.
\end{lemma}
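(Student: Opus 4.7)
The plan is to produce, for a given $\varepsilon > 0$, a $\delta > 0$ independent of $n$ such that any two points $b,c \in \C\setminus\Omega_n$ with $|b-c|<\delta$ lie in a common continuum in $\C\setminus\Omega_n$ of diameter at most $\varepsilon$. First I would set $\varepsilon_1 = \varepsilon/C$ for a constant $C$ depending only on $M$ and $\mu$, invoke the two hypotheses of the lemma at scale $\varepsilon_1$ to produce $\delta_1 = \delta_1(\varepsilon_1)$ and $\delta_2 = \delta_2(\varepsilon_1)$, and take $\delta = \min(\varepsilon_1,\,\delta_1/3,\,2\mu\sin(\delta_2/2))$. The analysis then splits into the case of small $n$, where the uniform radial spacing $(M-\mu)/n$ is too large for (ii) to apply to consecutive arcs, which forces $n\leq N:=\lceil 3(M-\mu)/\delta_1\rceil$, and the case of large $n$. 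For each $n\leq N$, the domain $\Omega_n$ is a single fixed blocked circle domain whose complement is a finite union of circular arcs, radial segments, and closed polar sectors, hence locally connected; so I shrink $\delta$ by the finitely many individual moduli of local connectivity for $n\leq N$ and pass to the large-$n$ analysis.

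In the large $n$ case, given $b,c$ with $|b-c|<\delta$, I would take $j\leq k$ minimal with $r_{n,j}\leq\min(|b|,|c|)$ and $r_{n,k}\geq\max(|b|,|c|)$, so that $r_{n,k}-r_{n,j}\leq \delta+2(M-\mu)/n\leq\delta_1$ and hypothesis (ii) yields $\theta_{n,j,k}\leq\varepsilon_1$; this forces every gate angle $\phi_{n,l}$ with $j\leq l<k$ to satisfy $\phi_{n,l}\geq \min(\psi_{n,j},\psi_{n,k})-\varepsilon_1$. The structural observation behind the construction is that, because $\Omega_n$ is open, simply connected, symmetric, and disjoint from the real segment $[\mu,M]$, the polar ``central channel'' $\{r_{n,l}\leq\rho\leq r_{n,l+1},\, |\arg z|\leq \phi_{n,l}\}$ in each band lies entirely in $\C\setminus\Omega_n$, and consecutive such channels join through shared portions of the arcs to form a connected two-dimensional backbone of the complement.

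I would then build the continuum joining $b$ to $c$ as a polar sector $\{r_{n,j}\leq\rho\leq r_{n,k},\, |\alpha|\leq \alpha_*\}\cap(\C\setminus\Omega_n)$, augmented when necessary by short arc sub-segments or radial excursions. When both $b$ and $c$ lie in the backbone, i.e.\ $|\arg b|,|\arg c|\leq\min_l\phi_{n,l}$, taking $\alpha_*=\max(|\arg b|,|\arg c|)$ yields a sector contained in the backbone of diameter $\lesssim \delta+\varepsilon_1$. When $b$ lies on an arc ``tail'' with $|\arg b|>\phi_{n,l}$, the tail has angular extent at most $\psi_{n,l}-\phi_{n,l}$: if $\psi_{n,l}$ is not close to $\pi$, a short sub-arc of $A_{n,l}$ reaches the backbone; if $\psi_{n,l}$ is close to $\pi$, hypothesis (i) places $A_{n,l}$ within $\varepsilon_1$ of $\partial B(0,M)$, and I augment the continuum by a short detour across the thin outer annulus to the full outer circle $A_{n,n}\subset\C\setminus\Omega_n$, which connects onward to $c$. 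Points outside $B(0,M)$ are handled by combining exterior arcs on $\partial B(0,M)$ with the above.

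The main obstacle will be the case analysis for tail points when $\psi_{n,l}$ is near $\pi$: the arc traversal from a tail point to the backbone has large angular distance even though its Euclidean diameter is small, and one must verify that the excursion near $\partial B(0,M)$ genuinely lies in $\C\setminus\Omega_n$ without slipping into $\Omega_n$ through the thin throat near the negative real axis. Hypotheses (i) and (ii) collaborate precisely here: (ii) keeps every gate shallow so the central-channel backbone is wide, while (i) keeps any nearly full arc close to the outer boundary, ensuring that tail-to-tail Euclidean closeness is matched by tail-to-tail closeness inside the complement via short outer-annulus detours.
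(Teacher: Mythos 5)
Your overall architecture is the same as the paper's (reduce to boundary points, build continua out of arc portions, gates and annular sectors, use hypothesis (ii) to keep gates shallow and hypothesis (i) plus an outer detour for nearly-full arcs), but three steps do not close as written. First, in your backbone case the set $\{r_{n,j}\leq\rho\leq r_{n,k},\ \abs{\alpha}\leq\alpha_*\}\cap(\C\setminus\Omega_n)$ with $\alpha_*=\max(\abs{\arg b},\abs{\arg c})$ contains the arc $\{r_{n,j}e^{i\alpha}:\abs{\alpha}\leq\min(\alpha_*,\psi_{n,j})\}$ and hence has diameter comparable to $M\sin\alpha_*$; since gates may sit at angles up to $\min(\psi_{n,l},\psi_{n,l+1})$, which can be near $\pi$, two nearby backbone points at a common large angle would be joined by a continuum of diameter bounded below by a constant, not by $\delta+\varepsilon_1$. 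You need the one-sided angular interval between $\arg b$ and $\arg c$, treating the case where $b$ and $c$ straddle the real axis separately (the paper splits exactly there). Second, your tail argument controls a single point in isolation: the angular travel from a tail point $b$ on $A_{n,l}$ to the nearest gate is $\abs{\arg b}-\min_{j\le l'<k}\phi_{n,l'}$, whereas hypothesis (ii) only bounds $\min(\psi_{n,j},\psi_{n,k})-\min_{j\le l'<k}\phi_{n,l'}=\theta_{n,j,k}$; since $\abs{\arg b}$ may be as large as $\psi_{n,l}$, which can greatly exceed $\min(\psi_{n,j},\psi_{n,k})$, the claimed ``short sub-arc of $A_{n,l}$'' need not be short. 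The step that rescues this uses \emph{both} points: $\min(\abs{\arg b},\abs{\arg c})\leq\min(\psi_{n,j},\psi_{n,k})$ together with $\abs{\arg b-\arg c}\leq\pi\abs{b-c}/\mu$ shows the joint excursion down to the gates is at most $\theta_{n,j,k}+\pi\abs{b-c}/\mu$; pairs for which one tail is long and the other arc is short are simply never within $\delta$ of each other.

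Third, the case you correctly identify as the main obstacle is left unresolved. Knowing from hypothesis (i) that $M-r_{n,j}<\varepsilon_1$ does not by itself produce a small continuum from $A_{n,j}$ to the outer circle: if the gates between $A_{n,j}$ and $\partial B(0,M)$ were deep, the only route outward would wrap all the way around to the front. The paper closes this by feeding the conclusion of (i) back into (ii) applied to the pair $(j,n)$ to get $\theta_{n,j,n}$ small — this is why its $\delta$ contains the composed term $\tfrac{\mu}{\pi}\delta_2\bigl(\delta_1(\varepsilon/MC)\bigr)$ — and then takes as the continuum the complement's trace on the back annular sector between angles $\pm\psi$ with $\psi=\min(\abs{\arg b},\abs{\arg c},\psi_{n,j}-\theta_{n,j,n})$, of diameter at most $(M-r_{n,j})+2M(\pi-\psi)$. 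Your sketch needs this additional application of hypothesis (ii) made explicit before the outer-annulus detour is justified. (Your small-$n$/large-$n$ split is harmless but unnecessary: when both points lie on boundary arcs their radii are exactly $r_{n,j}$ and $r_{n,k}$ with $r_{n,k}-r_{n,j}\leq\abs{b-c}<\delta$, so (ii) applies for every $n$ without the $2(M-\mu)/n$ slack.)
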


\begin{proof}
Fix some $\varepsilon>0$. We wish to find some $\delta>0$ such that if $z$, $w\notin\Omega_n$ and $\abs{z-w}<\delta$, then there is a continuum of diameter at most $\varepsilon$ contained in $\C\setminus\Omega_n$ connecting $z$ and $w$. We need only consider the case where $z$, $w\in\partial\Omega_n$. In fact, we can go further and consider only the case where $z$, $w$ lie on boundary arcs (not gates), that is, where $z$, $w\in\partial X_n$.

Define
\[\delta:=\min\left\{
\mu,
\frac{\varepsilon}{C},
\delta_1(\varepsilon/MC),
\frac{\mu}{\pi}\delta_2(\varepsilon/C),
\frac{\mu}{\pi}\delta_2(\delta_1(\varepsilon/MC))
\right\},\]
where $C$ is a constant to be chosen later.

We will use two elementary geometric estimates throughout this proof. First, let $S=\{z: r<\abs{z}<R, \theta_0<\arg z<\theta_0+\theta\}$ be an annular sector with inner radius $r$, outer radius $R$, and subtending an angle of $\theta$. Then its diameter is bounded by the equation
\begin{equation}
\label{eqn:diamsector}
\diam S \leq (R-r)+R\theta.
\end{equation}
Furthermore, let $z$ and $w$ be complex numbers. Suppose that the angle from zero between $z$ and $w$ is~$\theta$, $0\leq\theta\leq\pi$. Then
\begin{equation}
\label{eqn:angledistance}
\abs{z}\theta\leq\pi\abs{z-w}.
\end{equation}

Take $z$, $w\in\partial X_n$ with $\abs{z-w}<\delta$. Let $r_{n,j}=\abs{w}$, $r_{n,k}=\abs{z}$; without loss of generality $j\leq k$. Then $r_{n,k}-r_{n,j}<\delta$. We wish to show that $z$ and $w$ may be connected by a continuum of diameter at most $\varepsilon$.

Either $z$ and $w$ both lie on the same side of the real axis, or they do not.

\begin{figure}
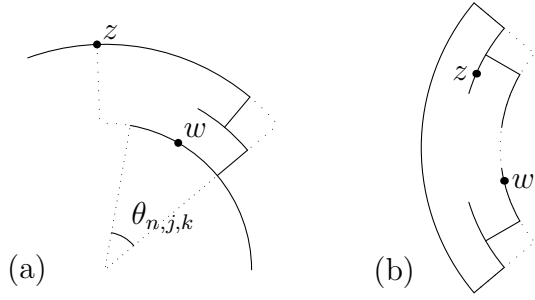

\mypic{3}
\caption{The points $z$ and $w$ in \lemref{unifcctd} and the subsets of $\partial\Omega$ connecting them. In the diagram at left, $z$ and $w$ lie on the same side of the real axis; in the diagram on the right, they lie on opposite sides of the real axis.}
\label{fig:unifcctd}
\end{figure}

If they do, then they are connected by a continuum lying in $\C\setminus\Omega_n$ that lies on the boundary of a sector of an annulus of inner radius $r_{n,j}$, outer radius $r_{n,k}$, and that subtends an angle at most $\theta_{n,j,k}+\abs{\arg z-\arg w}$. (See \figref{unifcctd}a.) By \bartoneqref{equation}{eqn:diamsector}, the diameter of this continuum is at most
\begin{align*}
(r_{n,k}-r_{n,j}) + r_{n,k}(\abs{\arg z-\arg w} + \theta_{n,j,k})
&\leq \abs{z}-\abs{w}
+\pi\abs{z-w}
+ M\theta_{n,j,k}
\\&\leq (1+\pi)\abs{z-w}+M\theta_{n,j,k}.
\end{align*}
Since $r_{n,k}-r_{n,j}<\delta\leq\delta_1(\varepsilon/MC)$, we know by Condition~\eqref{item:unifcctd:a} that $\theta_{n,j,k}<\varepsilon/MC$. Also, $\abs{z-w}<\delta<\varepsilon/C$. By choosing $C \geq 2+\pi$, we see that $z$, $w$ are connected by a continuum in $\C\setminus\Omega_n$ of diameter at most~$\varepsilon$.

If $z$ and $w$ lie on different sides of the real axis, then since $\delta\leq\mu$, $z$ and $w$ must lie on the same side of the imaginary axis. If they lie on the right-hand side, then they can be connected by a path contained in the union of their boundary arcs in $\partial X_n$ with the positive real axis. If the domain of the $\arg$ function is taken to be $(-\pi,\pi]$, then this path has length
\[
\abs{z}-\abs{w}+\abs{z} \abs{\arg z}+\abs{w} \abs{\arg w}
\leq\abs{z-w}+r_{n,k} \abs{\arg z}+r_{n,k} \abs{\arg w}
=\abs{z-w}+r_{n,k} \abs{\arg z-\arg w}
.\]
By \bartoneqref{equation}{eqn:angledistance},
\[\abs{z-w}+r_{n,k}\abs{\arg z-\arg w}\leq(1+\pi)\abs{z-w}<(1+\pi)\delta\leq (1+\pi)\varepsilon/C.\]

Otherwise, $z$ and $w$ lie on the left-hand side of the imaginary axis. We will connect them by a continuum on the left side of the circle.

Recall that $M=r_{n,n}$, and so $\theta_{n,j,n}$ measures the inset angle of the deepest gate between the arc $A_{n,j}$ and the outer boundary circle. Let
\[\psi=\min(\abs{\arg z},\abs{\arg w},\psi_{n,j}-\theta_{n,j,n}).\]
Consider the annular sector, symmetric about the real axis and containing part of the negative real axis, with inner radius $r_{n,j}$, outer radius $M=r_{n,n}$, and lying between the angles $\pm\psi$. (See \figref{unifcctd}b.)  This sector subtends an angle of $2(\pi-\psi)$ at zero. Now, $z$ and $w$ can be connected by a continuum in $\partial\Omega$ lying in this sector. By \bartoneqref{equation}{eqn:diamsector}, this path has diameter at most
\[M-r_{n,j}+2M(\pi-\psi).\]

We must control $M-r_{n,j}$ and $\pi-\psi$.

Notice that $\psi_{n,j}\leq \abs{\arg w}$ and $\psi_{n,k}\leq\abs{\arg z}$.
The angle (from zero) between $z$ and $w$ is $(\pi-\abs{\arg z})+(\pi-\abs{\arg w})$. So by \bartoneqref{equation}{eqn:angledistance},
\[
r_{n,j}((\pi-\psi_{n,j})+(\pi-\psi_{n,k}))
\leq r_{n,j} ((\pi-\abs{\arg z})+(\pi-\abs{\arg w}))
\leq \pi\abs{z-w}
\]
and so
\[\pi-\psi\leq \frac{\pi\abs{z-w}}{r_{n,j}}+\theta_{n,j,n}
< \frac{\pi\delta}{\mu} +\theta_{n,j,n}
\leq \frac{\pi\varepsilon}{C\mu}+\theta_{n,j,n}.\]

Furthermore,
\begin{align*}
\mu(\pi-\psi_{n,j})&< \pi\delta\leq \mu\,\delta_2(\varepsilon/C),\\
\mu(\pi-\psi_{n,j})&< \pi\delta\leq \mu\,\delta_2(\delta_1(\varepsilon/MC)).
\end{align*}
By \bartoneqref{condition}{item:unifcctd:b}, we have that
$M-r_{n,j}<\varepsilon/C$; since $M=r_{n,n}\geq r_{n,j}$, we have that
$\abs{r_{n,n}-r_{n,j}}\leq \delta_1(\varepsilon/MC)$. Thus by \bartoneqref{condition}{item:unifcctd:a}, we have that $\theta_{n,j,n}<\varepsilon/MC$. 

Thus, ${\pi-\psi}< \pi\varepsilon/C\mu+\varepsilon/MC$ and $M-r_{n,j}<\varepsilon/C$. Recall we can connect $z$ and $w$ by a continuum of diameter at most
\begin{align*}
M-r_{n,j}+2M(\pi-\psi)
&\leq \frac{\varepsilon}{C} + \frac{2M\pi\varepsilon}{C\mu}+\frac{2\varepsilon}{C}.
\end{align*}
Choosing $C\geq3+2\pi M/\mu$ completes the proof.
\end{proof}



\providecommand{\bysame}{\leavevmode\hbox to3em{\hrulefill}\thinspace}
\providecommand{\MR}{\relax\ifhmode\unskip\space\fi MR }
\providecommand{\MRhref}[2]{%
  \href{http://www.ams.org/mathscinet-getitem?mr=#1}{#2}
}
\providecommand{\href}[2]{#2}

\end{document}